\documentclass{article}

\usepackage{authblk}

\usepackage{hyperref}
\usepackage{amssymb,amsfonts,amsmath,amsthm,pigpen,bbm,
,enumitem
,stmaryrd
,pifont
,hyphenat
,epigraph
,tikz
,etoolbox
,mparhack
,mathtools
,microtype
,lmodern
,datetime
,cite
,CJKutf8
,marvosym
,chessfss,
marvosym,wasysym,mathrsfs,afterpage,
tikz-cd,tikz}
%\usepackage[dvipsnames]{xcolor}
%\usepackage{enumerate,enumitem,tikz-cd,tikz,mathrsfs,graphicx,marvosym,wasysym,epigraph,afterpage}
%mathrsfs is for \mathscr, marvosym for \Coffeecup, graphicx for \heartsuit, wasysym is for \twonotes
%\usepackage[colorlinks=true, urlcolor=purple, linkcolor=RoyalBlue, citecolor=magenta, pdfborder={0 0 0}]{hyperref}
%\usepackage[document]{ragged2e}
%\usetikzlibrary{backgrounds}

\usepackage[all]{xypic}

\setlength\epigraphwidth{.6\textwidth}
\setlength{\parindent}{0pt}

\newcommand\blankpage{
    \null
    \thispagestyle{empty}
    \addtocounter{page}{-1}
    \newpage}

\newtheorem{thm}{Theorem}[section]
\newtheorem{cor}[thm]{Corollary}
\newtheorem{prop}[thm]{Proposition}
\newtheorem{lem}[thm]{Lemma}
\newtheorem{defn}[thm]{Definition}
\theoremstyle{definition} 
\newtheorem{exmp}[thm]{Example}
\newtheorem{rem}[thm]{Remark}

\newcommand{\Z}{\mathbb{Z}}
\newcommand{\Oo}{\mathcal{O}}
\newcommand{\ts}{\mathrm{ts}}
\newcommand{\tee}{\mathfrak{t}}
\newcommand{\orth}{\boxslash}

\newcommand{\pos}{\textnormal{Pos}}
\newcommand{\op}{\textnormal{op}}
\newcommand{\gr}{\Gamma}
\newcommand{\lex}{\times_{\textnormal{lex}}}

\DeclareMathOperator{\fib}{fib}
\DeclareMathOperator{\cofib}{cofib}

\begin{document}

\title{The (he)art of gluing}

\author[1]{Domenico Fiorenza\thanks{fiorenza@mat.uniroma1.it}}
\author[2]{Giovanni Luca Marchetti\thanks{gmarchetti1@sheffield.ac.uk}}

\affil[1]{\small{
	Dipartimento di Matematica, 
	Sapienza Universit\`a di Roma}}
\affil[2]{\small{
	School of Mathematics and Statistics, 
	University of Sheffield}
	}

\maketitle

\abstract{We introduce a notion of gluability for poset-indexed Bridgeland slicings on triangulated categories and show how a gluing abelian slicing on the heart of a bounded $t$-structure naturally induces a family of perverse $t$-structures. Our setup generalises the one of Collins and Polishchuk. As a corollary we recover several constructions from the theory of $t$-structures on triangulated categories. In particular we rediscover Levine's theorem: the Beilinson-Soul\'e vanishing conjecture implies the existence of Tate motives. }

\tableofcontents

\section{Prologue}
A common feature shared by several constructions involving $t$-structures on triangulated categories is the following. One starts with a (possibly infinite) semiorthogonal decomposition $\{\mathscr{D}_n\}_{n\in I\subseteq \Z}$ whose triangulated subcategories $\mathscr{D}_n$ are endowed with distinguished $t$-structures and, thanks to the vanishing of certain Ext groups, ends up with a new $t$-structure on $\mathscr{D}$ together with an Harder-Narashiman-type filtration on its heart. In terms of Bridgeland slicings (in the slightly generalized version presented in \cite{gkr,fosco}), this corresponds to trading a $\hat{\Z}\times_{\mathrm{lex}}\Z$-slicing for a ${\Z}\times_{\mathrm{lex}}\hat{\Z}$-slicing, where $\hat{\Z}$ denotes the ordered set of integers (with its usual order) endowed with the trivial $\Z$-action, and the subscript `lex' means that the product is given the lexicographic order. The aim of this note is to characterize the condition allowing this `exchange of factors', which we call `gluability' {as it is a direct generalization of the gluing of stability conditions introduced by Collins and Polishchuk in \cite{collins}, which is in turn} reminiscent of the `recollement situations' considered in \cite{bbd}. Once the gluability condition has been explicited, it is pretty immediate to identify several examples from the literature where this condition is more or less implicitly occurring. In particular, we are able to rediscover Levine's theorem: the Beilinson-Soul\'e vanishing conjecture implies the existence of Tate motives \cite{levine}. 
Other examples include Beilinson's construction of a distinguished $t$-stucture from a filtered structure on a triangulated category \cite{beil}, the $t$-structure obtained by Marcr\`i from exceptional collections with Ext vanishings in \cite{macri}, and the construction of a distinguished $t$-structure on the Fukaya category of the trivial $\mathbb{C}$-bundle over a symplectic manifold $M$ exhibited by Hensel in \cite{lagra}. Moreover, we show how the distinguished $t$-structure built out of a gluable slicing always come with a whole family of `perverse'' variants. For instance, the `perverse  motives' considered in \cite{permot} and the `perverse coherent sheves' considered in \cite{bezr} arise this way. A closely related example is the construction of the exotic $t$-structure on the derived category of $A$-modules, for $A$ a Koszul algebra, obtained by Koszul duality \cite{kosz}.

\vskip .5 cm
In this note we assume he reader is familiar with the language of Bridgeland slicings \cite{bridgeland}, in its generalization for an arbitrary poset $J$ endowed with a $\Z$-action considered in \cite{gkr} and surveyed in \cite{fosco}. We refer to \cite{fosco} for the notation and the definition used here. In particular we use the language of stable $\infty$-categories; the reader who prefers not to use higher categories will find no difficulty in verbatim translating each of the statements and proofs presented here in  the more classical language of triangulated categories.

\section{The gluing procedure} 

Let $\mathscr{D}$ a stable $\infty$-category or, if one prefers a more classical setting, a triangulated category.
If one considers Bridgeland slicings indexed by arbitrary partially orderes sets (endowed with a compatible $\Z$-action) as in \cite{gkr}, then one can think of associating with any $\Z$-poset $J$ the set of $J$-slicings on $\mathscr{D}$. This defines a `slice functor' on the category of $\Z$-posets,
see \cite[Remark 3.12]{fosco}. The slice functor actually hides a greater potential. Indeed, even if we don't have a morphism of $\mathbb{Z}$-posets (but just a $\mathbb{Z}$-equivariant map), then under suitably conditions the slice functor can still be applied. \\

\subsection{$f$-compatible slicings}
We begin by recalling the construction of the slice functor; see \cite{fosco} for details.
Let $J$ be a $\Z$-toset, i.e., a totally ordered set together with a monotone action of $\Z$, that we will denote by $(n,x)\mapsto x+n$. A $J$-slicing on a stable $\infty$-category $\mathscr{D}$ is a morphism of $\Z$-posests $\tee\colon \Oo(J)\to \ts(\mathscr{D})$, where $\Oo(J)$ is the $\Z$-poset of the slicings of $J$ (i.e., the decompositions of $J$ into a disjoint union of a lower set $L$ and of an upper set $U$), and $\ts(\mathscr{D})$ is the $\Z$-poset of $t$-structures on $\mathscr{D}$.

Clearly, if $f\colon J\to J'$ is a morphism of $\Z$-tosets, then \[
f^{-1}\colon \{\text{subsets of $J'$}\}\to \{\text{subsets of $J$}\}
\]
 induces a $\Z$-equivariant morphism of $\Z$-posets $f^{-1}\colon \Oo(J')\to \Oo(J)$, and so composition with $f^{-1}$ gives a morphism
\begin{align*}
f_*\colon J\text{-slicings on $\mathscr{D}$}&\to J'\text{-slicings on $\mathscr{D}$}\\
\tee&\mapsto \tee\circ f^{-1}.
\end{align*}
The slices of $f_*\tee$ are given by $\mathscr{D}_{f_*\tee;j}=\mathscr{D}_{\tee;f^{-1}(\{j\})}$, for any $j\in J'$. Notice that, as $f$ is monotone, the subset $f^{-1}(\{j\})$ is an interval in $J$. It is immediate to see that $f_*$ restricts to a map
\begin{align*}
f_*\colon \text{Bridgeland $J$-slicings on $\mathscr{D}$}&\to \text{Bridgeland $J'$-slicings on $\mathscr{D}$}.
\end{align*}
Namely, if $\mathcal{H}^j_{f_*\tee}(X)=0$ for every $j\in J'$ then $\mathcal{H}^{f^{-1}(\{j\})}_{\tee}(X)=0$ for every $j$ in $J'$. As we are assuming the $J$-slicing $\tee$ is a Bridgeland slicing, this implies that $\mathcal{H}^{\phi}_{\tee}(X)=0$ for every $\phi$ in $f^{-1}(\{j\})$, for every $j$. Therefore $\mathcal{H}^{\phi}_{\tee}(X)=0$ for every $\phi$ in $J$ and so, again by definition of Bridgeland slicing, $X=0$. Also, if $\mathcal{H}^j_{f_*\tee}(X)\neq 0$ then $\mathcal{H}^{f^{-1}(\{j\})}_{\tee}(X)\neq 0$ and so (again by the Bridgeland slicing condition) there exists at least an element $\phi$ in $f^{-1}(\{j\})$ such that $\mathcal{H}^{\phi}_{\tee}(X)\neq 0$. As the $J$-slicing $\tee$ is Bridgeland, the total of these $\phi$'s must be finite, so only for finitely many $j$ we can have such a $\phi$. In other words, the number of indices $j$ in $J'$ such that $\mathcal{H}^j_{f_*\tee}(X)\neq 0$ is finite.

Notice that, if $\tee$ is a Bridgeland slicing of $\mathscr{D}$, and $f\colon J\to J'$ is a morphism of $\Z$-tosets, then for any slicing $(L,U)$ of $J'$, the lower and the upper categories
$\mathscr{D}_{f_*\tee;L}$ and $\mathscr{D}_{f_*\tee;U}$ can be equivalently defined as
\begin{align*}
\mathscr{D}_{f_*\tee;L}&=\langle \mathscr{D}_{\tee;\phi}\rangle_{f(\phi)\in L}\\
\mathscr{D}_{f_*\tee;U}&=\langle \mathscr{D}_{\tee;\phi}\rangle_{f(\phi)\in U}
\end{align*}
where $\langle \mathscr{S}\rangle$ denotes the extension-closed subcategory of $\mathscr{D}$ generated by the subcategory $\mathscr{S}$. In particular the slices of are given by 
$\mathscr{D}_{f_*\tee;j}=\langle \mathscr{D}_{\tee;\phi}\rangle_{f(\phi)=j}$.\\

\begin{rem}
The right hand sides of the above two expressions can clearly be defined for every morphism $f$ from $J$ to $J'$ (i.e., not necessarily monotone nor $\Z$-equivariant), and as soon as $f$ is $\Z$-equivariant, the assignment
\[
(L,U)\mapsto (\langle \mathscr{D}_{\tee;\phi}\rangle_{f(\phi)\in L},\langle \mathscr{D}_{\tee;\phi}\rangle_{f(\phi)\in U})
\]
is an equivariant morphism from $\mathcal{O}(J)$ to pairs of subcategories of $\mathscr{D}$. Clearly, when $f$ is not monotone there is no reason to expect that the pair $(\langle \mathscr{D}_{\tee;\phi}\rangle_{f(\phi)\in L},\langle \mathscr{D}_{\tee;\phi}\rangle_{f(\phi)\in U})$ forms a $t$-structure on $\mathscr{D}$.
\end{rem}
Yet, it interesting to notice that the condition that $f$ be monotone is only only sufficient in order to have this, and can indeed be relaxed.\\

\begin{defn}\label{compatible}
Let $J$ and $J'$ be $\Z$-tosets, and let $f\colon J\to J'$ a map of $\Z$-sets (i.e., a $\Z$-equivariant map, not necessarily nondecreasing). A Bridgeland $J$-slicing $\tee$ of $\mathscr{D}$ is said to be \emph{$f$-compatible}
if the condition `$f(\phi)>f(\psi)$ with $\phi\leq \psi$' implies $\mathscr{D}_{\tee; \phi}\orth \mathscr{D}_{\tee;\psi}$ and $\mathscr{D}_{\tee; \phi}\orth \mathscr{D}_{\tee;\psi}[1]$.
\footnote{Given two subcategories $\mathscr{S}_1$ and $\mathscr{S}_2$ of $\mathscr{D}$ we write $\mathscr{S}_1\orth \mathscr{S}_2$ to mean that $\mathscr{D}(X_1,X_2)$ is contractible for any $X_1\in \mathscr{S}_1$ and any $X_2\in\mathscr{S}_2$. It is easy to see that $\mathscr{S}_1\orth \mathscr{S}_2$ implies $\langle\mathscr{S}_1\rangle\orth \langle\mathscr{S}_2\rangle$, see \cite[Lemma 4.21]{fosco}}
\end{defn}

\begin{rem}\label{everything-compatible}
Clearly, if $f$ is monotone, then every $J$-slicing $\tee$ is $f$-compatible as the condition `$f(\phi)>f(\psi)$ with $\phi\leq \psi$' is empty.
\end{rem}

\begin{rem}\label{avanti-e-indietro}
Let $J$ and $J'$ be $\Z$-tosets, let $f\colon J\to J'$ a map of $\Z$-sets, and let $g\colon J'\to J''$ be an isomorphism of $\Z$-posets. Then a Bridgeland $J$-slicing $\tee$ of $\mathscr{D}$ is $f$-compatible if and only if it is $(g\circ f)$-compatible. Similarly, if $h\colon J''\to J$ is an isomorphism of $\Z$-posets, then $\tee$ is $f$-compatible if and only if $(h^{-1})_*\tee$ is $(f\circ h)$-compatible.
\end{rem}

\begin{lem}\label{is-t-structure}
Let $J$ and $J'$ be $\Z$-tosets, and let $f\colon J\to J'$ be a $\Z$-equivariant morphism of $\Z$-sets (i.e., not necessarily a monotone map) and let $\tee$ be a Bridgeland slicing of $\mathscr{D}$ which is $f$-compatible. Then, for any slicing $(L,U)$ of $J'$, the pair of subcategories
$(\langle \mathscr{D}_{\tee;\phi}\rangle_{f(\phi)\in L},\langle \mathscr{D}_{\tee;\phi}\rangle_{f(\phi)\in U})$ is a $t$-structure on $\mathscr{D}$.
\end{lem}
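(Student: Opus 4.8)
The plan is to verify directly, for the pair $(\mathscr{D}_L,\mathscr{D}_U):=(\langle \mathscr{D}_{\tee;\phi}\rangle_{f(\phi)\in L},\langle \mathscr{D}_{\tee;\phi}\rangle_{f(\phi)\in U})$, the three defining properties of a $t$-structure on $\mathscr{D}$: stability under the shift ($\mathscr{D}_U[1]\subseteq\mathscr{D}_U$ and $\mathscr{D}_L[-1]\subseteq\mathscr{D}_L$, up to possibly exchanging the roles of $L$ and $U$ according to one's conventions), the orthogonality $\mathscr{D}_U\orth\mathscr{D}_L$, and the existence, for each $X\in\mathscr{D}$, of a fiber sequence $X_U\to X\to X_L$ with $X_U\in\mathscr{D}_U$ and $X_L\in\mathscr{D}_L$. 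The source of the truncation triangles will be the Harder--Narasimhan filtrations supplied by the slicing $\tee$. Shift-stability is formal and uses only that $f$ is $\Z$-equivariant: a generator $\mathscr{D}_{\tee;\phi}[1]=\mathscr{D}_{\tee;\phi+1}$ of $\mathscr{D}_U[1]$ is the slice of $\tee$ at $\phi+1$, and $f(\phi+1)=f(\phi)+1$ still lies in $U$, hence $\mathscr{D}_{\tee;\phi+1}$ is again among the generators of $\mathscr{D}_U$; symmetrically for $\mathscr{D}_L$. This is the same computation that makes the slice functor well defined in the monotone case.

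For the orthogonality, by \cite[Lemma 4.21]{fosco} (quoted in the footnote to Definition \ref{compatible}) it is enough to check that $\mathscr{D}_{\tee;\phi}\orth\mathscr{D}_{\tee;\psi}$ whenever $f(\phi)\in U$ and $f(\psi)\in L$. Fix such $\phi,\psi$. Since $J'$ is totally ordered and $(L,U)$ splits it into a lower set and an upper set, $f(\phi)\in U$ and $f(\psi)\in L$ force $f(\phi)>f(\psi)$, and in particular $\phi\ne\psi$. If $\phi>\psi$ in $J$, then $\mathscr{D}_{\tee;\phi}\orth\mathscr{D}_{\tee;\psi}$ already because $\tee$ is a slicing. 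If instead $\phi\le\psi$, then $(\phi,\psi)$ satisfies precisely the condition `$f(\phi)>f(\psi)$ with $\phi\le\psi$', so the first of the two orthogonalities required in Definition \ref{compatible} yields $\mathscr{D}_{\tee;\phi}\orth\mathscr{D}_{\tee;\psi}$.

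The real content is producing the truncation triangle, and this is where I expect the main obstacle to lie. Given $X$, take its Harder--Narasimhan filtration $0=X_0\to X_1\to\cdots\to X_k=X$ relative to $\tee$, with subquotients $G_i:=X_i/X_{i-1}\in\mathscr{D}_{\tee;\psi_i}$ and $\psi_1>\psi_2>\cdots>\psi_k$. Call $G_i$ a $U$-piece if $f(\psi_i)\in U$ and an $L$-piece if $f(\psi_i)\in L$. I would reorganise the filtration so that all $U$-pieces form an initial segment and all $L$-pieces form a final segment; then, setting $X_U$ to be the sub-object carrying the $U$-pieces and $X_L:=X/X_U$, the extension-closedness of $\langle-\rangle$ puts $X_U$ in $\mathscr{D}_U$ and $X_L$ in $\mathscr{D}_L$, and $X_U\to X\to X_L$ is the desired fiber sequence. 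To reorganise, move the $U$-pieces leftward one at a time in the order in which they occur: to bring a $U$-piece $G\in\mathscr{D}_{\tee;\phi}$ into place, one exchanges it with the step immediately to its left, as many times as needed; at that stage this step is always an $L$-piece and, having had a smaller index in the original filtration, carries a slice $\mathscr{D}_{\tee;\psi'}$ with $\psi'>\phi$. For one such exchange, the two-step subquotient $W$ involved sits in a fiber sequence $G'\to W\to G$ classified by an element of $\mathscr{D}(G,G'[1])$; since $f(\phi)>f(\psi')$ (because $f(\phi)\in U$ and $f(\psi')\in L$) while $\phi\le\psi'$, the second orthogonality of Definition \ref{compatible} gives $\mathscr{D}_{\tee;\phi}\orth\mathscr{D}_{\tee;\psi'}[1]$, so $\mathscr{D}(G,G'[1])$ is contractible, $W\simeq G'\oplus G$, and $W$ is then equally a fiber sequence $G\to W\to G'$; lifting this refiltration to the corresponding segment of the chain swaps the two steps and moves the $U$-piece up. After finitely many such moves the $U$-pieces form an initial segment, as wanted. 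Combined with the orthogonality and the shift-stability, this produces a $t$-structure by the standard recognition criterion (an orthogonality together with mere existence of truncation triangles --- without any a priori functoriality --- suffices).

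The delicate point I anticipate is the bookkeeping in the last step: one must make sure that every elementary exchange genuinely satisfies the hypothesis `$f(\phi)>f(\psi')$ with $\phi\le\psi'$' of Definition \ref{compatible}. Handling the $U$-pieces from left to right is what keeps this under control, since then each exchange pairs a $U$-piece with an $L$-piece of strictly larger phase --- exactly the situation that a monotone $f$ would forbid, and exactly where $f$-compatibility is designed to intervene. The remaining verifications (termination of the exchanges, compatibility of the successive refiltrations, and that $G'\to W\to G$ really is a fiber sequence with those endpoints) are routine applications of the octahedral axiom that I would not spell out.
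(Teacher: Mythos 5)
Your proposal is correct and follows essentially the same route as the paper: the same formal shift-stability computation, the same two-case orthogonality check, and the same ``bubble sort'' rearrangement of the Harder--Narasimhan filtration, where each adjacent $(L,U)$ exchange is legitimised by the splitting $\mathscr{D}(G,G'[1])\simeq *$ coming from the second orthogonality in the $f$-compatibility condition. The only difference is presentational: you defer the octahedral/pullout bookkeeping that the paper writes out in full.
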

\begin{proof}
As $f$ is $\Z$-equivariant and $U+1\subseteq U$, we have
\begin{align*}
\langle \mathscr{D}_{\tee;\phi}\rangle_{f(\phi)\in U}[1]&=\langle \mathscr{D}_{\tee;\phi}[1]\rangle_{f(\phi)\in U}\\
&=\langle \mathscr{D}_{\tee;\phi+1}\rangle_{f(\phi)\in U}\\
&=\langle \mathscr{D}_{\tee;\phi+1}\rangle_{f(\phi+1)\in U+1}\\
&=\langle \mathscr{D}_{\tee;\phi}\rangle_{f(\phi)\in U+1}\\
&\subseteq \langle \mathscr{D}_{\tee;\phi}\rangle_{f(\phi)\in U}
\end{align*}
and similarly for the lower subcategory $\langle \mathscr{D}_{\tee;\phi}\rangle_{f(\phi)\in L}$. To show that 
\[
\langle \mathscr{D}_{\tee;\phi}\rangle_{f(\phi)\in U}\orth \langle \mathscr{D}_{\tee;\phi}\rangle_{f(\phi)\in L}
\]
it suffices to show that, if $f(\psi)\in L$ and $f(\phi)\in U$ then $\mathscr{D}_{\tee;\phi}\orth \mathscr{D}_{\tee;\psi}$. As $L\cap U=\emptyset$ we cannot have $\psi=\phi$, so either $\psi<\phi$ or vice versa. In the first case, $\mathscr{D}_{\tee;\phi}\orth \mathscr{D}_{\tee;\psi}$ by definition of Bridgeland $J$-slicing. In the second case, we have $\phi<\psi$ and $f(\phi)>f(\psi)$ as $f(\phi)\in U$ and  $f(\psi)\in L$. Therefore, since $\tee$ is $f$-compatible, $\mathscr{D}_{\tee;\phi}\orth\mathscr{D}_{\tee;\psi}$. Finally, we have to show that every object $X$ in $\mathscr{D}$ fits into a fiber sequence
\[
\xymatrix{
X_U\ar[r]\ar[d] & X\ar[d]\\
0\ar[r] & X_L
}
\]
with $X_L\in \langle \mathscr{D}_{\tee;\phi}\rangle_{f(\phi)\in L}$ and $X_U\in \langle \mathscr{D}_{\tee;\phi}\rangle_{f(\phi)\in U}$. As $\tee$ is a Bridgeland slicing, we have a factorization of the initial morphism $\mathbf{0} \to X$ of the form
\[
\mathbf{0}=X_0 \xrightarrow{\alpha_1} X_1\cdots \xrightarrow{\alpha_{{\bar\imath}}}X_{{\bar\imath}}\xrightarrow{\alpha_{{\bar\imath}+1}}X_{{\bar\imath}+1}\xrightarrow{}\cdots \xrightarrow{\alpha_n} X_n=X
\]
with $\mathbf{0} \neq \cofib(\alpha_i)=\mathcal{H}_{\tee}^{\phi_i}(X) \in \mathscr{D}_{\phi_i}$ for all $i = 1, \cdots, n$, with $\phi_i>\phi_{i+1}$. Let us now consider the sequence of symbols $L$ and $U$ obtained putting in the $i$-th place $L$ if $f(\phi_i)\in L$ and $U$ if $f(\phi_i)\in U$. If this sequence is of the form $(U,U,\dots,U,L,L,\dots,L)$, then there exists an index $\bar{\imath}$ such that
 $f(\phi_{i})\in U$ for $i\leq \bar{\imath}$ and $f(\phi_{i})\in L$ for $i>\bar{\imath}$ (with ${\bar\imath}=-1$ or $n$ when all of the $f(\phi_i)$ are in $L$ or in $U$, respectively). Then we can
consider the pullout diagram
\[
\xymatrix{
X_{{\bar\imath}}\ar[r]\ar[d]_{f_{L}}&0\ar[d]\\
X\ar[r]&\mathrm{cofib}(f_{L})
}\]
together with the factorizations
\[
\mathbf{0}=X_0 \xrightarrow{\alpha_1} X_1\cdots \xrightarrow{\alpha_{{\bar\imath}}}X_{{\bar\imath}} 
\]
and
\[
X_{{\bar\imath}}\xrightarrow{\alpha_{{\bar\imath}+1}}X_{{\bar\imath}+1}\xrightarrow{}\cdots \xrightarrow{\alpha_n} X_n=X.
\]
The first factorization shows that $X_{{\bar\imath}}\in \langle\cup_{i=0}^{{\bar\imath}}\mathscr{D}_{\phi_i}\rangle\subseteq \langle \mathscr{D}_{\tee;\phi}\rangle_{f(\phi)\in U}$ while the second factorization shows that $\mathrm{cofib}(f_{L})\in  \langle\cup_{i={\bar\imath}+1}^n\mathscr{D}_{\phi_i}\rangle\subseteq \langle \mathscr{D}_{\tee;\phi}\rangle_{f(\phi)\in L}$. So we are done in this case. Therefore, we are reduced to showing that we can always avoid a $(\dots,L,U,\dots)$ situation in our sequence of $L$'s and $U$'s. Assume we have such a situation. Then we have an index $i_0$ with $f(\phi_{i_0})\in L$ and $f(\phi_{{i_0}+1})\in U$. This in particular implies $f(\phi_{{i_0}+1})>f(\phi_{i_0})$ with $\phi_{{i_0}+1}<\phi_{i_0}$. As $\tee$ is $f$-compatible, this gives $\mathscr{D}_{\tee; \phi_{i_0+1}}\orth (\mathscr{D}_{\tee;\phi_{i_0}}[1])$. In particular, $\mathscr{D}(\mathcal{H}^{\phi_{i_0+1}}_{\tee}(X),\mathcal{H}^{\phi_{i_0}}_{\tee}(X)[1])$ is contractible. Now consider the pasting of pullout diagrams
\[
\xymatrix{X_{i_0-1}\ar[r]^{\alpha_{i_0}}\ar[d] & X_{i_0}\ar[r]^{\alpha_{i_0+1}}\ar[d] & X_{i_0+1}\ar[d]^{\gamma}\\
0\ar[r]&\mathcal{H}^{\phi_{i_0}}_{\tee}(X)\ar[d]\ar[r]&Y\ar[d]\ar[r] &0\ar[d]\\
&0\ar[r]&\mathcal{H}^{\phi_{i_0+1}}_{\tee}(X)\ar[r]&\mathcal{H}^{\phi_{i_0}}_{\tee}(X)[1].
}
\]
As the arrow $\mathcal{H}^{\phi_{i_0+1}}_{\tee}(X)\to \mathcal{H}^{\phi_{i_0}}_{\tee}(X)[1]$ factors through $0$, we have $Y\cong\mathcal{H}^{\phi_{i_0+1}}_{\tee}(X)\oplus \mathcal{H}^{\phi_{i_0}}_{\tee}(X)$ and the above diagram becomes
\[
\xymatrix{X_{i_0-1}\ar[r]^{\alpha_{i_0}}\ar[d] & X_{i_0}\ar[r]^{\alpha_{i_0+1}}\ar[d] & X_{i_0+1}\ar[d]^{\gamma}\\
0\ar[r]&\mathcal{H}^{\phi_{i_0}}_{\tee}(X)\ar[d]\ar[r]^-{\iota_2}&\mathcal{H}^{\phi_{i_0+1}}_{\tee}(X)\oplus \mathcal{H}^{\phi_{i_0}}_{\tee}(X)\ar[d]^{\pi_1}\ar[r] &0\ar[d]\\
&0\ar[r]&\mathcal{H}^{\phi_{i_0+1}}_{\tee}(X)\ar[r]&\mathcal{H}^{\phi_{i_0}}_{\tee}(X)[1],
}
\]
where $\iota_2$ and $\pi_1$ are the canonical inclusion and projection. Let $\beta\colon X_{i_0+1}\to \mathcal{H}^{\phi_{i_0}}_{\tee}(X)$ be the composition
\[
\beta\colon X_{i_0+1}\xrightarrow{\gamma} \mathcal{H}^{\phi_{i_0+1}}_{\tee}(X)\oplus \mathcal{H}^{\phi_{i_0}}_{\tee}(X)\xrightarrow{\pi_2} 
\mathcal{H}^{\phi_{i_0}}_{\tee}(X).
\]
Then we have a homotopy commutative diagram
\[
\xymatrix{
X_{i_0-1}\ar[r]^{\alpha_{i_0}}\ar[d]&X_{i_0}\ar[r]^{\alpha_{i_0+1}}\ar[d] & X_{i_0+1}\ar[d]^{\beta}\\
0\ar[r]&\mathcal{H}^{\phi_{i_0}}_{\tee}(X)\ar[r]^-{\mathrm{id}}&\mathcal{H}^{\phi_{i_0}}_{\tee}(X)
}
\]
(where only the left square is a pullout), and so the composition $\alpha_{i_0+1}\circ \alpha_{i_0}$ factors through the homotopy fiber of $\beta$. In other words, we have a homotopy commutative diagram
\[
\xymatrix{
X_{i_0-1}\ar[r]^{\alpha_{i_0}}\ar[d]_{\tilde{\alpha}_{i_0}}&X_{i_0}\ar[d]^{\alpha_{i_0+1}}\\
\fib(\beta)\ar[r]^{\tilde{\alpha}_{i_0+1}}&X_{i_0+1}\
}
\]
Writing $\tilde{X}_{i_0}=\fib(\beta)$, we get the pasting of pullout diagrams
\[
\xymatrix{X_{i_0-1}\ar[r]^{\tilde{\alpha}_{i_0}}\ar[d] & \tilde{X}_{i_0}\ar[r]^{\tilde{\alpha}_{i_0+1}}\ar[d] & X_{i_0+1}\ar[d]^{\gamma}\\
0\ar[r]&\mathcal{H}^{\phi_{i_0+1}}_{\tee}(X)\ar[d]\ar[r]^-{\iota_1}&\mathcal{H}^{\phi_{i_0+1}}_{\tee}(X)\oplus \mathcal{H}^{\phi_{i_0}}_{\tee}(X)\ar[d]_{\pi_2}\\
&0\ar[r]&\mathcal{H}^{\phi_{i_0}}_{\tee}(X).
}
\]
That is, by considering the factorization $X_{i_0-1}\xrightarrow{\tilde{\alpha}_{i_0}} \tilde{X}_{i_0}\xrightarrow{\tilde{\alpha}_{i_0+1}} X_{i_0+1}$ we have switched the cofibers with respect to the original factorization $X_{i_0-1}\xrightarrow{{\alpha}_{i_0}} {X}_{i_0}\xrightarrow{{\alpha}_{i_0+1}} X_{i_0+1}$. Therefore, writing $\tilde{\phi}_{i_0}=\phi_{i_0+1}$ and $\tilde{\phi}_{i_0+1}=\phi_{i_0}$, 
we now have  $f(\tilde{\phi}_{i_0})\in U$ and $f(\tilde{\phi}_{i_0+1})\in L$. That is, we have removed the $(\dots,L,U,\dots)$ situation from the position $i_0$, replacing it with a $(\dots,U,L,\dots)$ situation, while keeping all the labels $L,U$ before this positions unchanged. Repeating the procedure the needed number of times, we eventually get rid of all the $(\dots,L,U,\dots)$ situations.\footnote{This is somehow reminiscent of the `bubble sort' algorithm.}
\end{proof}

\begin{rem}\label{a-closer-look}
It follows from the proof of Lemma \ref{is-t-structure} that the objects $X_L$ and $X_U$ in the fiber sequence $X_U\to X\to X_L$ associated with the $t$-struture $(\langle \mathscr{D}_{\tee;\phi}\rangle_{f(\phi)\in L},\langle \mathscr{D}_{\tee;\phi}\rangle_{f(\phi)\in U})$ on $\mathscr{D}$ satisfy
\begin{align*}
X_L&\in \langle \mathscr{D}_{\tee;\phi};\quad f(\phi)\in L \text{ and }\mathcal{H}^\phi_\tee(X)\neq 0\rangle\\
X_U&\in \langle \mathscr{D}_{\tee;\phi};\quad f(\phi)\in U \text{ and }\mathcal{H}^\phi_\tee(X)\neq 0\rangle
\end{align*}
\end{rem}

\begin{lem}\label{slices}
For any $j\in J'$ we have
\[
\langle \mathscr{D}_{\tee;\phi}\rangle_{f(\phi)=j}=\langle \mathscr{D}_{\tee;\phi}\rangle_{f(\phi)\leq j}\cap \langle \mathscr{D}_{\tee;\phi}\rangle_{f(\phi)\geq j}.
\]
\end{lem}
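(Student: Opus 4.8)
The plan is to prove the two inclusions separately. The inclusion of $\langle \mathscr{D}_{\tee;\phi}\rangle_{f(\phi)=j}$ into $\langle \mathscr{D}_{\tee;\phi}\rangle_{f(\phi)\leq j}\cap \langle \mathscr{D}_{\tee;\phi}\rangle_{f(\phi)\geq j}$ is formal: since $J'$ is totally ordered, $\{\phi\colon f(\phi)=j\}$ is contained in both $\{\phi\colon f(\phi)\leq j\}$ and $\{\phi\colon f(\phi)\geq j\}$, and passing from a family of slices to the extension-closed subcategory it generates is monotone in the family, so $\langle \mathscr{D}_{\tee;\phi}\rangle_{f(\phi)=j}$ sits inside each of the two subcategories on the right. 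All the content is in the reverse inclusion.

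For that, I would first isolate the following sharpening of Remark~\ref{a-closer-look}: for any slicing $(L,U)$ of $J'$, an object $X$ of $\mathscr{D}$ lies in $\langle \mathscr{D}_{\tee;\phi}\rangle_{f(\phi)\in L}$ if and only if $\mathcal{H}^{\phi}_{\tee}(X)=0$ for every $\phi$ with $f(\phi)\in U$ (and symmetrically with $L$ and $U$ exchanged). To see this, recall that the ``bubble-sort'' argument in the proof of Lemma~\ref{is-t-structure} produces the truncation triangle $X_U\to X\to X_L$ of $X$ with respect to the $t$-structure $(\langle \mathscr{D}_{\tee;\phi}\rangle_{f(\phi)\in L},\langle \mathscr{D}_{\tee;\phi}\rangle_{f(\phi)\in U})$ by reordering the Harder--Narasimhan filtration of $X$ relative to $\tee$ so that all indices $\phi_i$ with $f(\phi_i)\in U$ precede all those with $f(\phi_i)\in L$; since this reordering is effected by adjacent transpositions each exchanging a pair labelled $(L,U)$ into $(U,L)$, it never swaps two $U$'s or two $L$'s, hence preserves the relative order of the indices within each of the two blocks. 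Consequently the induced filtrations of $X_U$ and of $X_L$ are themselves Harder--Narasimhan filtrations relative to $\tee$, with non-zero semistable factors exactly the $\mathcal{H}^{\phi}_{\tee}(X)$ having $f(\phi)\in U$, respectively $f(\phi)\in L$; and since $X$ lies in $\langle \mathscr{D}_{\tee;\phi}\rangle_{f(\phi)\in L}$ precisely when $X_U=\mathbf{0}$, that is, precisely when $X_U$ has no semistable factors, the sharpened statement follows. This verification --- that the sort is \emph{stable}, i.e. keeps each block internally ordered by strictly decreasing phase, so that the truncations it outputs really are Harder--Narasimhan filtrations --- is the only genuinely delicate step; everything else is bookkeeping with the uniqueness of Harder--Narasimhan filtrations for Bridgeland slicings.

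Granting the auxiliary fact, let $X$ belong to $\langle \mathscr{D}_{\tee;\phi}\rangle_{f(\phi)\leq j}\cap \langle \mathscr{D}_{\tee;\phi}\rangle_{f(\phi)\geq j}$. Applying it to the slicing $(\{j'\leq j\},\{j'>j\})$ of $J'$, membership of $X$ in $\langle \mathscr{D}_{\tee;\phi}\rangle_{f(\phi)\leq j}$ forces $\mathcal{H}^{\phi}_{\tee}(X)=0$ whenever $f(\phi)>j$; applying it to $(\{j'<j\},\{j'\geq j\})$, membership of $X$ in $\langle \mathscr{D}_{\tee;\phi}\rangle_{f(\phi)\geq j}$ forces $\mathcal{H}^{\phi}_{\tee}(X)=0$ whenever $f(\phi)<j$. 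Therefore every semistable factor of the Harder--Narasimhan filtration $\mathbf{0}=X_0\to\cdots\to X_n=X$ of $X$ relative to $\tee$ lies in a slice $\mathscr{D}_{\tee;\phi_i}$ with $f(\phi_i)=j$, and since $X$ is the iterated extension of these factors we conclude that $X\in\langle \mathscr{D}_{\tee;\phi}\rangle_{f(\phi)=j}$, as desired.
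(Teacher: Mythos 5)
Your argument is correct, and it runs on the same two engines as the paper's proof --- the ``bubble-sort'' rearrangement from Lemma \ref{is-t-structure} and the uniqueness of truncation triangles for the $t$-structures that lemma produces --- but it packages them differently. The paper works directly with the object $X$: from $X\in \langle \mathscr{D}_{\tee;\phi}\rangle_{f(\phi)\geq j}$ it takes one filtration with cofibers in slices with $f(\phi)\geq j$, sorts it once with respect to the slicing $((-\infty,j],(j,+\infty))$ to obtain a triangle $X_{>j}\to X\to X_j$ with $X_j\in\langle \mathscr{D}_{\tee;\phi}\rangle_{f(\phi)=j}$, and then, since this is also the (unique) truncation triangle for the $t$-structure $(\langle \mathscr{D}_{\tee;\phi}\rangle_{f(\phi)\leq j},\langle \mathscr{D}_{\tee;\phi}\rangle_{f(\phi)>j})$ and $X$ lies in the lower class, concludes $X_{>j}=\mathbf{0}$ and $X=X_j$; no identification of the cohomologies of the truncations is needed. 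You instead first prove a stronger intermediate statement --- membership in $\langle \mathscr{D}_{\tee;\phi}\rangle_{f(\phi)\in L}$ is equivalent to the vanishing of $\mathcal{H}^{\phi}_{\tee}(X)$ for all $\phi$ with $f(\phi)\in U$ --- which sharpens Remark \ref{a-closer-look} and requires exactly the point you flag: the sort is stable, so the two blocks of the sorted filtration are themselves Harder--Narasimhan filtrations of $X_U$ and $X_L$, whence their $\tee$-cohomologies are those of $X$. Granting that, applying the characterization to the two slicings $(\{j'\leq j\},\{j'>j\})$ and $(\{j'<j\},\{j'\geq j\})$ and reassembling $X$ from its factors is clean and correct. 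What your detour buys is a reusable cohomological description of the glued aisles (essentially the statement the paper later imports as \cite[Remark 4.27]{fosco} in the proof of Theorem \ref{perverse-heart}); what it costs is the extra verification of stability of the sort and of uniqueness of HN filtrations, which the paper's more economical one-sort-plus-uniqueness argument avoids for this particular lemma.
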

\begin{proof}
Clearly, $\langle \mathscr{D}_{\tee;\phi}\rangle_{f(\phi)=j}\subseteq \langle \mathscr{D}_{\tee;\phi}\rangle_{f(\phi)\leq j}\cap \langle \mathscr{D}_{\tee;\phi}\rangle_{f(\phi)\geq j}$, therefore we only need to prove the converse inclusion. Let $\in \langle \mathscr{D}_{\tee;\phi}\rangle_{f(\phi)\leq j}\cap \langle \mathscr{D}_{\tee;\phi}\rangle_{f(\phi)\geq j}$. Then in particular 
$X\in \langle \mathscr{D}_{\tee;\phi}\rangle_{f(\phi)\geq j}$
and so there exists a factorization of the initial morphism $\mathbf{0} \to X$ of the form
\[
\mathbf{0}=X_0 \xrightarrow{\alpha_1} X_1\xrightarrow{\alpha_2}\cdots \xrightarrow{\alpha_{n-1}}X_{n-1}\xrightarrow{\alpha_n} X_n=X
\]
with $\mathbf{0} \neq \cofib(\alpha_i) \in \mathscr{D}_{\phi_i}$ with $f(\phi_i)\geq j$ for all $i = 1, \cdots, n$. As $((-\infty,j],(j,+\infty)$ is a slicing of $J'$, reasoning as in the proof of Lemma \ref{is-t-structure} we can arrange this factorization is such a way that $f(\phi_i)>j$ for $i\leq \bar{\imath}$ and $f(\phi_i)= j$ for $i>\bar{\imath}$. Therefore, again by reasoning as in the proof of Lemma \ref{is-t-structure} we get a fiber sequence of the form
\[
\xymatrix{
X_{>j}\ar[r]\ar[d] & X\ar[d]\\
0\ar[r]&X_{j}
}
\]
with $X_{>j}$ in $\langle \mathscr{D}_{\tee;\phi}\rangle_{f(\phi)>j}$ and $X_j$ in $\langle \mathscr{D}_{\tee;\phi}\rangle_{f(\phi)=j}$. This is in particular a fiber sequence of the form $X_{>j}\to X\to X_{\leq j}$, with $X_{\leq j}\in \langle \mathscr{D}_{\tee;\phi}\rangle_{f(\phi)\leq j}$. 
As $(\langle \mathscr{D}_{\tee;\phi}\rangle_{f(\phi)\leq j},\langle \mathscr{D}_{\tee;\phi}\rangle_{f(\phi)> j})$ is a $t$-structure on $\mathscr{D}$ by Lemma \ref{is-t-structure}, there is (up to equivalence) only one such a fiber sequence. And since $X\in \langle \mathscr{D}_{\tee;\phi}\rangle_{f(\phi)\leq j}$, this is the sequence $0\to X\xrightarrow{\mathrm{id}} X$. Therefore $X=X_j$.
\end{proof}

\begin{prop}
Let $J,J'$ be $\Z$-tosets, and let  $f\colon J\to J'$ be a $\Z$-equivariant morphism of $\Z$-sets (i.e., not necessarily a monotone map) and let $\tee$ be a Bridgeland slicing of $\mathscr{D}$ which is $f$-compatible.  The map
\begin{align*}
f_!\tee\colon \Oo(J')&\to\ts(\mathscr{D})\\
(L,U)&\mapsto (\langle \mathscr{D}_{\tee;\phi}\rangle_{f(\phi)\in L},\langle \mathscr{D}_{\tee;\phi}\rangle_{f(\phi)\in U})
\end{align*}
defined by Lemma \ref{is-t-structure} is a Bridgeland $J'$-slicing of $\mathscr{D}$, with slices given by
\[
\mathscr{D}_{f_!\tee;j}=\langle \mathscr{D}_{\tee;\phi}\rangle_{f(\phi)=j}.
\]
\end{prop}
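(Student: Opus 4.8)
The plan is to check, in order, that $f_!\tee$ is a well-defined morphism of $\Z$-posets $\Oo(J')\to\ts(\mathscr{D})$, that its slices are the asserted extension closures, and that it satisfies the Harder--Narasimhan axiom; being a Bridgeland $J'$-slicing then follows.

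The structural part is formal given what precedes. Well-definedness, i.e.\ that $f_!\tee(L,U)$ is a $t$-structure for every slicing $(L,U)$ of $J'$, is exactly Lemma~\ref{is-t-structure}. Monotonicity is immediate: $L\subseteq L'$ forces $\{\phi:f(\phi)\in L\}\subseteq\{\phi:f(\phi)\in L'\}$ and hence $\langle\mathscr{D}_{\tee;\phi}\rangle_{f(\phi)\in L}\subseteq\langle\mathscr{D}_{\tee;\phi}\rangle_{f(\phi)\in L'}$, which is the order relation on $\ts(\mathscr{D})$. Equivariance is the computation $\langle\mathscr{D}_{\tee;\phi}\rangle_{f(\phi)\in U}[1]=\langle\mathscr{D}_{\tee;\phi}\rangle_{f(\phi)\in U+1}$ already performed inside the proof of Lemma~\ref{is-t-structure}, which used the equivariance of both $\tee$ and $f$. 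As for the slices: by the general definition, the slice at $j$ of a $J'$-slicing is the intersection of the two subcategories attached to it by the slicings $((-\infty,j],(j,+\infty))$ and $((-\infty,j),[j,+\infty))$ of $J'$; for $f_!\tee$ these are, by its very definition, $\langle\mathscr{D}_{\tee;\phi}\rangle_{f(\phi)\leq j}$ and $\langle\mathscr{D}_{\tee;\phi}\rangle_{f(\phi)\geq j}$, and Lemma~\ref{slices} identifies their intersection with $\langle\mathscr{D}_{\tee;\phi}\rangle_{f(\phi)=j}$, as claimed.

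The real content is the Harder--Narasimhan axiom: every $X\in\mathscr{D}$ should admit a finite filtration with semistable cofibers of strictly decreasing $J'$-phases. I would start from the $\tee$-HN filtration $\mathbf{0}=X_0\xrightarrow{\alpha_1}\cdots\xrightarrow{\alpha_n}X_n=X$, with $\cofib(\alpha_i)=\mathcal{H}^{\phi_i}_\tee(X)\in\mathscr{D}_{\tee;\phi_i}$ nonzero and $\phi_1>\cdots>\phi_n$, and rerun on it the ``bubble sort'' of the proof of Lemma~\ref{is-t-structure}, this time sorting until the sequence $f(\phi_1),\dots,f(\phi_n)$ becomes weakly decreasing rather than merely into a $U$-block followed by an $L$-block. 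Every transposition the sort calls for is licensed by $f$-compatibility: one only ever swaps two consecutive cofibers, say in slices $\mathscr{D}_{\tee;\mu}$ (left) and $\mathscr{D}_{\tee;\nu}$ (right), when $f(\mu)<f(\nu)$, and such a pair can only have come to stand in the order ``$\mu$ before $\nu$'' out of the original strictly decreasing arrangement $\phi_1>\cdots>\phi_n$ (a pair once swapped has the larger $f$-value on the left and is never swapped back), so necessarily $\nu<\mu$ in $J$; Definition~\ref{compatible} then yields $\mathscr{D}_{\tee;\nu}\orth\mathscr{D}_{\tee;\mu}[1]$, which is exactly what the cofiber-switching step of Lemma~\ref{is-t-structure} requires. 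Since this sort is stable, within each maximal block of indices sharing a common $f$-value the cofibers keep their original, strictly decreasing, $J$-phases. Collapsing the sorted filtration along the blocks $j_1>\cdots>j_m$ of constant $f$-value then produces cofibers $C_1,\dots,C_m$, where $C_t$ is an iterated extension of the $\mathcal{H}^\phi_\tee(X)$ with $f(\phi)=j_t$; thus $C_t\in\langle\mathscr{D}_{\tee;\phi}\rangle_{f(\phi)=j_t}=\mathscr{D}_{f_!\tee;j_t}$, and, carrying a finite filtration with semistable cofibers of strictly decreasing $\tee$-phases, $C_t$ has that as its own (unique) $\tee$-HN filtration, so $C_t\neq 0$. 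This exhibits the HN filtration of $X$ for $f_!\tee$; uniqueness gives $\mathcal{H}^{j_t}_{f_!\tee}(X)=C_t$ and $\mathcal{H}^j_{f_!\tee}(X)=0$ for $j\notin\{j_1,\dots,j_m\}$, whence $X$ has finite support for $f_!\tee$ and vanishes whenever all its $\mathcal{H}^j_{f_!\tee}$ do. So $f_!\tee$ is Bridgeland, with slices as stated.

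I expect the only genuine obstacle to be the bookkeeping of this last step: checking precisely that the bubble sort never asks for a transposition beyond what $f$-compatibility can provide --- equivalently, that two consecutive cofibers standing in the order ``$\mathscr{D}_{\tee;\mu}$ before $\mathscr{D}_{\tee;\nu}$'' with $f(\mu)<f(\nu)$ must satisfy $\nu<\mu$ --- and that stability of the sort genuinely keeps the $J$-phases decreasing inside each $f$-value block, so that the collapsed cofibers are semistable and nonzero. Everything else reduces to Lemmas~\ref{is-t-structure} and~\ref{slices} together with the uniqueness of Harder--Narasimhan filtrations for the Bridgeland slicing $\tee$.
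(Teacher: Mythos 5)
Your proposal is correct, and the structural half (well-definedness via Lemma~\ref{is-t-structure}, monotonicity, equivariance, identification of the slices via Lemma~\ref{slices}) coincides with the paper's. Where you diverge is in establishing the Bridgeland (finite and discrete) conditions. The paper does not rerun the sort globally: it takes the truncation tower of $X$ for the already-established $J'$-slicing $f_!\tee$ along the decreasing sequence $j_1>\cdots>j_k$ of $f$-images of the nonvanishing $\tee$-phases, and then invokes Remark~\ref{a-closer-look} (which packages the outcome of the sort in Lemma~\ref{is-t-structure}) to locate each cofiber $\mathcal{H}^{(j_{i+1},j_i]}_{f_!\tee}(X)$ inside the single slice $\mathscr{D}_{f_!\tee;j_i}$, concluding by functoriality of the cohomologies that $\mathcal{H}^{\tilde\jmath}_{f_!\tee}(X)=0$ off $\{j_1,\dots,j_k\}$. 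You instead perform one global bubble sort on the $\tee$-HN filtration until the $f$-values are weakly decreasing and collapse the blocks by hand. Your route is more constructive and makes the HN filtration of $f_!\tee$ completely explicit, at the price of the combinatorial bookkeeping you flag at the end; and you do discharge that obligation correctly: a pair is swapped at most once (after a swap the left element has the strictly larger $f$-value), so any adjacent pair $(\mu,\nu)$ with $f(\mu)<f(\nu)$ is still in its original order, whence $\nu<\mu$ and $f$-compatibility supplies exactly the orthogonality $\mathscr{D}_{\tee;\nu}\orth\mathscr{D}_{\tee;\mu}[1]$ that the cofiber-switching step needs, while stability of the sort keeps the $J$-phases strictly decreasing within each $f$-block so the collapsed cofibers are nonzero. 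The paper's route buys brevity by reusing the abstract machinery of truncations; yours buys transparency about where every orthogonality hypothesis is consumed. Both are complete.
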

\begin{proof}
The map $f_!\tee$ is manifestly monotone and $\Z$-equivariant (see the first part of the proof of Lemma \ref{is-t-structure}), so it is a $J'$ slicing of $\mathscr{D}$, and its slices are given by $\mathscr{D}_{f_!\tee;j}=
%\mathscr{D}_{f_!\tee;\geq j}\cap \mathscr{D}_{f_!\tee;\leq j},
%\]
%and so by the subcategories $
\langle \mathscr{D}_{\tee;\phi}\rangle_{f(\phi)=j}$ by Lemma \ref{slices}. We are therefore left with showing that it is finite and discrete. Given an object $X$ in $\mathscr{D}$, let  $\{\phi_1,\dots,\phi_n\}$ be the indices in $J$ such that $\mathcal{H}^{\phi_i}_\tee(X)\neq 0$ and let $\{j_1,\dots,j_k\}$ the image of the set $\{\phi_1,\dots,\phi_n\}$ via $f$. Up to renaming, we can assume $j_1>j_{2}>\cdots>j_k$. Consider now the factorization
\[
\mathbf{0}=X_0 \xrightarrow{\alpha_1} X_1\xrightarrow{\alpha_2}\cdots \xrightarrow{\alpha_{k-1}}X_{k-1}\xrightarrow{\alpha_k} X_k=X
\]
of the initial morphism of $X$ associated to the decreasing sequence $j_1>j_2>\cdots>j_k$ by the $J'$-slicing $f_!\tee$. The cofibers of the morphisms $\alpha_{i}$ are the cohomologies
$\mathcal{H}^{(j_{i+1},j_{i}]}_{f_!\tee}(X)$ and, by Remark \ref{a-closer-look}, we have
\begin{align*}
\mathcal{H}^{(j_{i+1},j_{i}]}_{f_!\tee}(X)\in& \langle \mathscr{D}_{\tee;\phi};\quad f(\phi)\in (j_{i+1},j_{i}] \text{ and }\mathcal{H}^\phi_\tee(X)\neq 0\rangle\\
&=\langle \mathscr{D}_{\tee;\phi};\quad \phi\in\{\phi_1,\dots,\phi_n\}\text{ and } f(\phi)\in(j_{i+1},j_{i}]\rangle\\
&=\langle \mathscr{D}_{\tee;\phi};\quad \phi\in\{\phi_1,\dots,\phi_n\}\text{ and } f(\phi)=j_{i}\rangle\\
&\subseteq \langle \mathscr{D}_{\tee;\phi};\quad  f(\phi)=j_{i}\rangle=\mathscr{D}_{f_!\tee;j_{i}}.
\end{align*}
Therefore
\[
\mathcal{H}^{(j_i,j_{i-1}]}_{f_!\tee}(X)=\mathcal{H}^{j_i}_{f_!\tee}\mathcal{H}^{(j_i,j_{i-1}]}_{f_!\tee}(X)=\mathcal{H}^{j_i}_{f_!\tee}(X).
\]
This tells us that, if all the cohomologies $\mathcal{H}^{j}_{f_!\tee}(X)$ vanish, then also all the $\mathcal{H}^{(j_i,j_{i-1}]}_{f_!\tee}(X)$ vanish, and so $X=0$. Finally, if $\tilde{j}\notin\{j_1,\dots,j_k\}$, then there exists an index $i$ such that $j_{i+1}<\tilde{j}<j_{i}$ and so $(j_{i+1},\tilde{j}]\cap \{j_1,\dots,j_k\}=\emptyset$. The above argument then shows that
\[
\mathcal{H}^{(j_{i+1},\tilde{j}]}_{f_!\tee}(X)\in\langle \mathscr{D}_{\tee;\phi};\quad \phi\in\{\phi_1,\dots,\phi_n\}\text{ and } f(\phi)\in(j_{i+1},\tilde{j}]\rangle=\{\mathbf{0}\}.
\]
It follows that 
\[
\mathcal{H}^{\tilde{j}}_{f_!\tee}(X)=\mathcal{H}^{\tilde{j}}_{f_!\tee}\mathcal{H}^{(j_{i+1},\tilde{j}]}_{f_!\tee}(X)=\mathcal{H}^{\tilde{j}}_{f_!\tee}(0)=0,
\]
for every $\tilde{j}\notin \{j_1,\dots,j_k\}$. So in particular, for any $X\in \mathscr{D}$, the cohomologies $\mathcal{H}^{j}_{f_!\tee}(X)$ are possibly nonzero only for finitely many indices $j$. 
\end{proof}

\begin{rem}
If $f\colon J\to J'$  is a morphism of $\Z$-tosets, then every Bridgeland slicing of $\tee$ of $\mathscr{D}$ is $f$-compatible and one has $f_!\tee=f_*\tee$.
\end{rem}

\begin{prop}\label{functoriality}
Let $J,J',J''$ be $\Z$-tosets, let $f\colon J\to J'$ and $g\colon J'\to J''$ be $\Z$-equivariant morphisms of $\Z$-sets (i.e., not necessarily monotone maps), and let $\tee$ be a Bridgeland slicing of $\mathscr{D}$. If $\tee$ is $f$-compatible and $f_!\tee$ is $g$-compatible, then $\tee$ is $(g\circ f)$-compatible, and
\[
(g\circ f)_!\tee= g_!f_!\tee.
\]
\end{prop}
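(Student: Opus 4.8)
The plan is to prove the two assertions separately, reducing both to the slice description $\mathscr{D}_{f_!\tee;a}=\langle\mathscr{D}_{\tee;\phi}\rangle_{f(\phi)=a}$ supplied by the previous Proposition together with the behaviour of extension closures.

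For the first assertion, suppose $\phi\leq\psi$ in $J$ with $(g\circ f)(\phi)>(g\circ f)(\psi)$; the goal is $\mathscr{D}_{\tee;\phi}\orth\mathscr{D}_{\tee;\psi}$ and $\mathscr{D}_{\tee;\phi}\orth\mathscr{D}_{\tee;\psi}[1]$. I would distinguish the three possible comparisons of $f(\phi)$ and $f(\psi)$ in $J'$. The case $f(\phi)=f(\psi)$ is impossible, since it would force $(g\circ f)(\phi)=(g\circ f)(\psi)$. If $f(\phi)>f(\psi)$, then the pair $\phi\leq\psi$ already witnesses the defining condition of $f$-compatibility, so both orthogonalities hold by hypothesis. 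If $f(\phi)<f(\psi)$, then in $J'$ we have $f(\phi)\leq f(\psi)$ together with $g(f(\phi))>g(f(\psi))$, so the pair $f(\phi)\leq f(\psi)$ witnesses the defining condition of $g$-compatibility for the Bridgeland $J'$-slicing $f_!\tee$; hence $\mathscr{D}_{f_!\tee;f(\phi)}\orth\mathscr{D}_{f_!\tee;f(\psi)}$ and $\mathscr{D}_{f_!\tee;f(\phi)}\orth\mathscr{D}_{f_!\tee;f(\psi)}[1]$. Since $\mathscr{D}_{f_!\tee;f(\phi)}=\langle\mathscr{D}_{\tee;\chi}\rangle_{f(\chi)=f(\phi)}\supseteq\mathscr{D}_{\tee;\phi}$ and $\mathscr{D}_{f_!\tee;f(\psi)}=\langle\mathscr{D}_{\tee;\chi}\rangle_{f(\chi)=f(\psi)}\supseteq\mathscr{D}_{\tee;\psi}$ (so also $\mathscr{D}_{f_!\tee;f(\psi)}[1]\supseteq\mathscr{D}_{\tee;\psi}[1]$), and orthogonality passes to subcategories, the two orthogonalities descend to $\mathscr{D}_{\tee;\phi}$ and $\mathscr{D}_{\tee;\psi}$. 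This proves that $\tee$ is $(g\circ f)$-compatible, so in particular $(g\circ f)_!\tee$ is defined and is, like $g_!f_!\tee$, a Bridgeland $J''$-slicing by the previous Proposition.

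For the equality $(g\circ f)_!\tee=g_!f_!\tee$ of maps $\Oo(J'')\to\ts(\mathscr{D})$, I would evaluate both sides on an arbitrary slicing $(L,U)$ of $J''$. By definition the lower part of $g_!f_!\tee(L,U)$ is $\langle\mathscr{D}_{f_!\tee;a}\rangle_{g(a)\in L}$, the extension closure of the union of the slices $\mathscr{D}_{f_!\tee;a}=\langle\mathscr{D}_{\tee;\phi}\rangle_{f(\phi)=a}$ over those $a\in J'$ with $g(a)\in L$. Invoking the elementary collapse of nested extension closures $\langle\bigcup_i\langle\mathscr{S}_i\rangle\rangle=\langle\bigcup_i\mathscr{S}_i\rangle$ (the right-hand side being extension-closed and containing every $\mathscr{S}_i$, hence every $\langle\mathscr{S}_i\rangle$), this equals the extension closure of $\bigcup_{a\,:\,g(a)\in L}\bigcup_{\phi\,:\,f(\phi)=a}\mathscr{D}_{\tee;\phi}=\bigcup_{\phi\,:\,(g\circ f)(\phi)\in L}\mathscr{D}_{\tee;\phi}$, i.e. $\langle\mathscr{D}_{\tee;\phi}\rangle_{(g\circ f)(\phi)\in L}$, which is precisely the lower part of $(g\circ f)_!\tee(L,U)$. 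The same computation with $U$ in place of $L$ handles the upper parts, so the two slicings agree on every slicing of $J''$ and are therefore equal.

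Everything here is formal, so I do not anticipate a genuine obstacle; the only points that require a little care are noticing that the case $f(\phi)=f(\psi)$ cannot occur in the compatibility argument, and invoking the slice description of the preceding Proposition at the appropriate places.
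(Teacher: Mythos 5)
Your proof is correct and follows essentially the same route as the paper's: a case split on the comparison of $f(\phi)$ and $f(\psi)$ in the total order on $J'$ (using $f$-compatibility in one case and $g$-compatibility of $f_!\tee$ plus the containment $\mathscr{D}_{\tee;\phi}\subseteq\mathscr{D}_{f_!\tee;f(\phi)}$ in the other), followed by the collapse of nested extension closures. The only cosmetic difference is that you verify the equality of slicings on every $(L,U)\in\Oo(J'')$ while the paper checks it on the slices $\mathscr{D}_{-;j}$; these are equivalent.
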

\begin{proof}
Let $\phi,\psi$ in $J$ be such that $\phi\leq \psi$ and $g(f(\phi))>g(f(\psi))$, and let $\xi=f(\phi)$ and $\eta=f(\psi)$. As $J'$ is a totally ordered set, either $\xi\leq \eta$ or $\xi>\eta$. In the first case we have $g(\xi)>g(\eta)$ with $\xi\leq \eta$. As $f_!\tee$ is $g$-compatible, this implies $\mathscr{D}_{f_!\tee; \xi}\orth \mathscr{D}_{f_!\tee;\eta}$ and $\mathscr{D}_{f_!\tee; \xi}\orth \mathscr{D}_{f_!\tee;\eta}[1]$. By definition of $f_!\tee$ we have $\mathscr{D}_{\tee; \phi}\subseteq \mathscr{D}_{f_!\tee; \xi}$ and $\mathscr{D}_{\tee; \psi}\subseteq \mathscr{D}_{f_!\tee; \eta}$, therefore we have $\mathscr{D}_{\tee; \phi}\orth \mathscr{D}_{\tee;\psi}$ and $\mathscr{D}_{\tee; \phi}\orth \mathscr{D}_{\tee;\psi}[1]$. In the second case we have $f(\phi)>f(\psi)$ with $\phi\leq \psi$. As $\tee$ is $f$-compatible, this again
implies $\mathscr{D}_{\tee; \phi}\orth \mathscr{D}_{\tee;\psi}$ and $\mathscr{D}_{\tee; \phi}\orth \mathscr{D}_{\tee;\psi}[1]$. Finally, for any $j\in J''$ one has
\[
\mathscr{D}_{g_!f_!\tee;j}=\langle \mathscr{D}_{f_!\tee;\xi}\rangle_{g(\xi)=j}=\langle\langle \mathscr{D}_{\tee;\phi}\rangle_{f(\phi)=\xi}\rangle_{g(\xi)=j}=\langle \mathscr{D}_{\tee;\phi}\rangle_{g(f(\phi))=j}=\mathscr{D}_{(g\circ f)_!\tee;j}.
\]
\end{proof}

\subsection{The exchange map}

Let now $J_1$ and $J_2$ be two $\mathbb{Z}$-tosets, and let $J_1 \lex J_2$ and $J_2 \lex J_1$ the two $\Z$-tosets obtained by considering the lexicogaphic order on the products $J_1\times J_2$ and $J_2\times J_1$, respectively, and the diagonal $\Z$ action. The following lemma is immediate.
\begin{lem}\label{exchange}
The exchange map
\begin{align*}
e\colon J_1\times J_2&\to J_2\times J_1\\
(j_1,j_2)&\mapsto (j_2,j_1)
\end{align*}
is $\Z$-equivariant.
\end{lem}

\begin{defn}\label{gluable}
Let $J_1$ and $J_2$ be $\Z$-tosets. A Bridgeland $J_1 \lex J_2$-slicing $\tee$ of $\mathscr{D}$ is said to be \emph{gluable} if it is $e$-compatible, where $e \colon J_1 \times J_2 \to J_2 \times J_1$ is the exchange map. 
\end{defn}

\begin{exmp}\label{example:bbd} Let $J_1=\{0,1\}$ with the trivial $\Z$-action and let $J_2=\Z$ with the standard translation $\Z$-action.  Then a $J_1\lex J_2$-slicing $\tee$ on $\mathscr{D}$ is the datum of a semiorthogonal decomposition $(\mathscr{D}_0,\mathscr{D}_1)$ of $\mathscr{D}$ toghether with $t$-structures $\tee_i$ on $\mathscr{D}_i$. Spelling out the definition, one sees that the $\{0,1\}\lex \Z$-slicing $\tee$ is gluable if and only if 
$\mathscr{D}_{0;\geq 0}\orth \mathscr{D}_{1;-1}$ and $\mathscr{D}_{0;\geq 0}\orth \mathscr{D}_{1;0}$, and so if and only if $\mathscr{D}_{0;\geq 0}\orth \mathscr{D}_{1;0}$. {As  $\mathscr{D}_{0;\geq 0}$ is generated by the subcategories $\mathscr{D}_{0}^\heartsuit[k]$ for $k\geq 0$, this is equivalent to $\mathscr{D}_{0}^\heartsuit\orth\mathscr{D}_{1}^\heartsuit[n]$ for all $n\leq 0$.} When this happens, the glued slicing $e_!\tee$ is a $\Z\lex\{0,1\}$ slicing on $\mathscr{D}$, i.e., is the datum of a bounded $t$-structure on $\mathscr{D}$ together with a torsion theory on its heart. More precisely, the heart of $e_!\tee$ is the full $\infty$-subcategory $\mathscr{D}^{\heartsuit_{e_!\tee}}$ of $\mathscr{D}$ on those objects $X$ that fall into fiber sequences of the form $X_1\to X\to X_0$ with $X_i\in \mathscr{D}_i^\heartsuit$, while the torsion theory on $\mathscr{D}^{\heartsuit_{e_!\tee}}$ is
  $(\mathscr{D}_0^{\heartsuit},\mathscr{D}_1^{\heartsuit})$, i.e., precisely the pair of hearts of the two subcategories in the semiorthogonal decomposition. \\
  This is exactly the setup of \cite{collins}, which on the other hand is a particular case of the classical gluing construction of $t$-structures by \cite{bbd}, explaining both our nomenclature and motivation.
\end{exmp}

\begin{lem}\label{incolla2}
Let $J_1$  and $J_2$ be $\Z$-tosets, and let $\tee$ be a Bridgeland $J_1 \lex J_2$-slicing of $\mathscr{D}$. If $\tee$ is gluable and $\Z$ acts trivially on $J_1$, then $e_! \tee$ is a gluable $J_2 \lex J_1$ slicing of $\mathscr{D}$. 
\end{lem}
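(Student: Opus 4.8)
The plan is to verify directly that $e_!\tee$ is $e^{-1}$-compatible in the sense of Definition \ref{compatible}, where $e^{-1}\colon J_2\times J_1\to J_1\times J_2$, $(j_2,j_1)\mapsto(j_1,j_2)$, is the set-theoretic inverse of $e$ and is at the same time the exchange map appearing in Definition \ref{gluable} for $J_2\lex J_1$-slicings; this is exactly what gluability of $e_!\tee$ means. Note first that $e^{-1}$ is $\Z$-equivariant by Lemma \ref{exchange} (with the roles of $J_1$ and $J_2$ interchanged), so the notion makes sense; that $e_!\tee$ is a Bridgeland $J_2\lex J_1$-slicing at all is exactly where gluability of $\tee$, i.e.\ $e$-compatibility, is used. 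Finally, since $e$ is a bijection with inverse $e^{-1}$, for every $\zeta\in J_2\times J_1$ the set $\{\phi: e(\phi)=\zeta\}$ is the singleton $\{e^{-1}(\zeta)\}$, so the slices of $e_!\tee$ are merely relabelled: $\mathscr{D}_{e_!\tee;\zeta}=\mathscr{D}_{\tee;e^{-1}(\zeta)}$.

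The next step is to unwind the hypothesis of Definition \ref{compatible}. Suppose $\xi=(\xi_2,\xi_1)$ and $\eta=(\eta_2,\eta_1)$ in $J_2\times J_1$ satisfy $\xi\leq\eta$ in $J_2\lex J_1$ and $e^{-1}(\xi)>e^{-1}(\eta)$ in $J_1\lex J_2$. The second inequality reads $(\xi_1,\xi_2)>(\eta_1,\eta_2)$, so $\xi_1\geq\eta_1$; and $\xi_1=\eta_1$ would force $\xi_2>\eta_2$, whence $\xi=(\xi_2,\eta_1)>(\eta_2,\eta_1)=\eta$, contradicting $\xi\leq\eta$. Therefore $\xi_1>\eta_1$ in $J_1$. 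This is the one structural fact to extract from the two lexicographic orders pulling in opposite directions, and it is essentially all that is needed.

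With this in hand the two required orthogonalities follow from the Bridgeland slicing axiom for $\tee$ alone. Since $\xi_1>\eta_1$ we have $(\xi_1,\xi_2)>(\eta_1,\eta_2)$ in $J_1\lex J_2$, so $\mathscr{D}_{\tee;(\xi_1,\xi_2)}\orth\mathscr{D}_{\tee;(\eta_1,\eta_2)}$, which after relabelling reads $\mathscr{D}_{e_!\tee;\xi}\orth\mathscr{D}_{e_!\tee;\eta}$. For the shifted version, triviality of the $\Z$-action on $J_1$ gives $(\eta_1,\eta_2)+1=(\eta_1,\eta_2+1)$, and again $\xi_1>\eta_1$ yields $(\xi_1,\xi_2)>(\eta_1,\eta_2+1)$ in $J_1\lex J_2$, so $\mathscr{D}_{\tee;(\xi_1,\xi_2)}\orth\mathscr{D}_{\tee;(\eta_1,\eta_2)}[1]$, i.e.\ $\mathscr{D}_{e_!\tee;\xi}\orth\mathscr{D}_{e_!\tee;\eta}[1]$. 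Hence $e_!\tee$ is $e^{-1}$-compatible, i.e.\ gluable.

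There is no serious obstacle here: the argument is essentially bookkeeping. The delicate points are keeping straight the two distinct lexicographic orders together with $e$ and its inverse, and noticing that the compatibility hypothesis forces $\xi_1>\eta_1$. It is worth flagging that triviality of the $\Z$-action on $J_1$ enters exactly once — to ensure the shift $[1]$ leaves the $J_1$-coordinate untouched, without which $\xi_1>\eta_1$ might fail to survive passage to $\eta+1$ — and that gluability of $\tee$ itself is used only to guarantee that $e_!\tee$ is a Bridgeland slicing in the first place, not in the verification of its own gluability.
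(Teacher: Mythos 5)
Your argument is correct and follows essentially the same route as the paper's proof: identify the slices of $e_!\tee$ as relabelled slices of $\tee$, extract $\xi_1>\eta_1$ from the two opposing lexicographic inequalities, and invoke the slicing axiom for $\tee$ together with triviality of the $\Z$-action on $J_1$ (so that the shift only moves the $J_2$-coordinate) for the shifted orthogonality. Your closing remark that gluability of $\tee$ enters only to guarantee that $e_!\tee$ is a well-defined Bridgeland slicing is also consistent with how the paper uses the hypothesis.
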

\begin{proof}
Let $(j_2,j_1)\leq (j_2',j_1')$ in $J_2 \lex J_1$ be such that $(j_1,j_2)>(j_1',j_2')$ in $J_1 \lex J_2$. 
%We have four possibilities:
%\begin{itemize}
%\item $j_2<j_2'$ and $j_1>j_1'$;
%\item $j_2=j_2'$ and $j_1\leq j_1'$ and $j_1>j_1'$;
%\item $j_2<j_2'$ and $j_1=j_1'$ and $j_2>j_2'$;
%\item $j_2=j_2'$ and $j_1\leq j_1'$ and $j_1=j_1'$ and $j_2>j_2'$.
%\end{itemize}
As $\tee$ is gluable, we have
\[
\mathscr{D}_{e_!\tee;(j_2,j_1)}=\langle \mathscr{D}_{\tee;(i_1,i_2)}\rangle_{e(i_1,i_2)=(j_2,j_1)}=\langle \mathscr{D}_{\tee;(j_1,j_2)}\rangle=\mathscr{D}_{\tee;(j_1,j_2)},
\]
since slices are extension closed. As $(j_1,j_2)>(j_1',j_2')$, we have $\mathscr{D}_{\tee;(j_1,j_2)}\orth \mathscr{D}_{\tee;(j_1',j_2')}$ by definition of slicing, and so $\mathscr{D}_{e_!\tee;(j_2,j_1)}\orth\mathscr{D}_{e_!\tee;(j_2',j_1')}$. Finally, 
\[
\mathscr{D}_{e_!\tee;(j_2,j_1)}[1]=\mathscr{D}_{\tee;(j_1,j_2)}[1]=\mathscr{D}_{\tee;(j_1,j_2)+1}=\mathscr{D}_{\tee;(j_1,j_2+1)},
\]
since the $\Z$-action on $J_1$ is trivial. The condition $(j_2,j_1)\leq (j_2',j_1')$ with $(j_1,j_2)>(j_1',j_2')$ is actually equivalent to $j_2<j_2'$ and $j_1>j_1'$, so in particular we have $(j_1,j_2)>(j_1',j_2'+1)$.This implies $\mathscr{D}_{\tee;(j_1,j_2)}\orth \mathscr{D}_{\tee;(j_1',j_2'+1)}$ and so $\mathscr{D}_{e_!\tee;(j_2,j_1)}\orth\mathscr{D}_{e_!\tee;(j_2',j_1')}[1]$.
\end{proof}

\subsection{Upper graphs and monotone maps}

\begin{defn}
Let $U\colon J \to \Oo(J')$ be a map of sets. We denote by $\gr_{U}$ the subset of $J\times J'$ defined by 
 $$\gr_U=\{ (j,j') \in J \times J'; \quad j' \in U_{j} \}.$$
 For $f\colon J\to J'$ a map of sets, we denote by $(<f,\geq f)\colon J \to \Oo(J')$ the composition of $f$ with the map
 \begin{align*}
 J'&\to \Oo(J')\\
 j'&\mapsto ((-\infty,j'),[j',+\infty)).
 \end{align*}
 The upper graph of $f$ is the subset $\gr_{\geq f}$ of $J\times J'$.
\end{defn}
  
\begin{lem}\label{decreasing-gives-upper-set}
  Let $U \colon J \to \Oo(J')$ be a map of sets. Then $\gr_U$
  is an upper set of $J \times J'$ with the \emph{product order} if and only if $U$ is a map of posets $U\colon J\to \Oo(J')^\op$. In particular, $\gr_{\geq f}$ is an upper set if and only if $f\colon J\to J'$ is a nondecreasing map, i.e., a map of posets $J\to J'^\op$.
\end{lem}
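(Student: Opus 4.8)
The plan is to unwind ``upper set of a product poset'' into two one-variable conditions, to observe that one of them holds automatically, and to recognise the remaining one as the order-reversal condition appearing in the statement.

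First I would record the elementary generality that a subset $A$ of a product $P\times Q$ of posets, with the product order, is an upper set if and only if it is an upper set in each variable separately: that is, $(p,q)\in A$ and $q'\geq q$ imply $(p,q')\in A$, and $(p,q)\in A$ and $p'\geq p$ imply $(p',q)\in A$. Necessity is immediate, and for sufficiency one passes from $(p,q)\in A$ up to an arbitrary $(p',q')\geq (p,q)$ by first raising the first coordinate and then the second.

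Next I would apply this with $P=J$, $Q=J'$ and $A=\gr_U$. Writing $U(j)=(L_j,U_j)$ for the slicing $U(j)\in\Oo(J')$, so that $\gr_U=\{(j,j'):j'\in U_j\}$, the first one-variable condition reads: for each fixed $j$, if $j'\in U_j$ and $j'\leq j''$ then $j''\in U_j$ --- and this is automatic, since $U_j$ is by definition an upper set of $J'$. The second one-variable condition reads: if $j'\in U_j$ and $j\leq j_2$ then $j'\in U_{j_2}$; demanded for all admissible $j'$, this is exactly the inclusion $U_j\subseteq U_{j_2}$ for all $j\leq j_2$ in $J$. Since $L_j=J'\setminus U_j$, this inclusion is equivalent to $L_{j_2}\subseteq L_j$, i.e.\ to $U(j_2)\leq U(j)$ in $\Oo(J')$ with its standard order (slicings ordered by inclusion of lower parts); so $\gr_U$ is an upper set precisely when $j\leq j_2$ forces $U(j)\geq U(j_2)$ in $\Oo(J')$, which is the assertion that $U$ is a morphism of posets $J\to\Oo(J')^{\op}$. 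That is the first half.

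For the ``in particular'', I would simply specialise the first half to $U=(<f,\geq f)$, whose upper component at $j$ is $U_j=[f(j),+\infty)$. The condition obtained above then becomes: $j\leq j_2$ implies $[f(j),+\infty)\subseteq[f(j_2),+\infty)$, that is $f(j_2)\leq f(j)$ --- precisely the condition that $f$ be a morphism of posets $J\to J'^{\op}$. I do not expect a genuine obstacle anywhere; the whole argument reduces to the product-poset observation together with bookkeeping, and the only point requiring care is keeping straight the conventional order on $\Oo(J')$, which is what produces the $\op$ and hence the order-reversal.
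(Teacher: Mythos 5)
Your proof is correct and follows essentially the same route as the paper's: both directions reduce to unwinding the product order on $J\times J'$, with the $J'$-direction handled by the fact that each $U_j$ is an upper set and the $J$-direction yielding exactly the inclusion $U_j\subseteq U_k$ for $j\leq k$, i.e.\ monotonicity into $\Oo(J')^{\op}$. The ``in particular'' specialisation to $U_j=[f(j),+\infty)$ likewise matches the paper's argument.
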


\begin{proof}Assume $(L,U)$ is a map of posets from $J$ to $\Oo(J')^\op$.
Pick $(j,j') \in \gr_U$ and suppose $(j,j') \leq (k,k')$ in $J \times J'$. As $(L,U)$ is monotone and $k\geq j$, we have $U_j\subseteq U_k$; as $(j,j')\in \gr_U$, we have $j' \in U_j$. Therefore $j\in U_k$. Since $U_k$ is an upper set and $k'\geq j'$, we get $k'\in U_k$, i.e. $(k,k')\in \gr_U$. Vice versa, assume $\gr_U$ is an upper set, and let $j\leq k$ in $J$. For any $j'\in U_j$, the element $(k,j')$ in $J\times J'$ satisfies $(k,j')\geq (j,j')$ in the product order and so $(k,j')\in \gr_U$. This means $j'\in U_k$ and so $U_j\subseteq U_k$. To prove the second part of the statement, notice that the map $j\mapsto ((-\infty,j'),[j',+\infty))$ is a map of posets from $J'\to \Oo(J')$. Therefore, if $f\colon J\to J'^\op$ is a map of posets, then also $(<f,\geq f)\colon J \to \Oo(J')^\op$ is a map of posets. Vice versa, if $(<f,\geq f)\colon J \to \Oo(J')^\op$ is a map of posets then for every $j\leq k$ in $J$ we have $[f(j),+\infty)\subseteq [f(k),+\infty)$ and so $f(k)\leq f(j)$.
\end{proof}

\begin{prop}\label{graphs}
  The map 
  \begin{align*}
 \gr \colon \pos(J,\Oo(J')^\op)^\op&\to \Oo(J \times J')\\
   U & \mapsto \gr_U  
  \end{align*}
is an isomorphism of posets. 
\end{prop}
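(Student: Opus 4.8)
The plan is to exhibit an explicit two-sided inverse to $\gr$ and to check that both $\gr$ and its inverse are monotone, so that they assemble into an isomorphism of posets. The candidate inverse sends a subset $S \subseteq J \times J'$ to the map $U^S \colon J \to \Oo(J')$ defined by $U^S_j = \{j' \in J' : (j,j') \in S\}$, the ``$j$-th slice'' of $S$. The first step is to verify that when $S = \gr_U$ for $U \in \pos(J,\Oo(J')^\op)$ we recover $U^{\gr_U} = U$; this is immediate from the definitions, since $(U^{\gr_U})_j = \{j' : j' \in U_j\} = U_j$. The second step is the reverse: starting from $S \in \Oo(J\times J')$, one has $\gr_{U^S} = \{(j,j') : j' \in U^S_j\} = \{(j,j') : (j,j') \in S\} = S$. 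So at the level of \emph{sets} the two maps are mutually inverse bijections; what remains is to check that $U^S$ actually lands in $\pos(J,\Oo(J')^\op)$ whenever $S$ is an upper set, and that the bijection is order-reversing in the appropriate sense on each side.

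The key remaining checks are: (i) if $S$ is an upper set of $J\times J'$, then each slice $U^S_j$ is an upper set of $J'$ — this is clear, since $(j,j') \in S$ and $j'' \geq j'$ force $(j,j'') \geq (j,j')$ hence $(j,j'') \in S$ — so $U^S_j$ determines a slicing $((-\infty\text{ part}), U^S_j)$ of $J'$, and one must also confirm the complement $J' \setminus U^S_j$ is a lower set, which is the same statement; (ii) $j \mapsto U^S_j$ is monotone as a map $J \to \Oo(J')^\op$, i.e.\ $j \leq k$ implies $U^S_j \subseteq U^S_k$: indeed for $j' \in U^S_j$ we have $(j,j') \in S$ and $(k,j') \geq (j,j')$, so $(k,j') \in S$, i.e.\ $j' \in U^S_k$. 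This is precisely the content already isolated in the ``vice versa'' direction of Lemma \ref{decreasing-gives-upper-set}, so one can simply cite that lemma: it says $S = \gr_U$ is an upper set \emph{iff} $U$ is a map of posets to $\Oo(J')^\op$, which gives both that $\gr$ is well-defined with image landing in upper sets and that its set-theoretic inverse $S \mapsto U^S$ carries upper sets back into $\pos(J,\Oo(J')^\op)$.

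Finally one checks the order compatibility. On $\pos(J,\Oo(J')^\op)$ the order is pointwise, and $\pos(J,\Oo(J')^\op)^\op$ flips it; on $\Oo(J\times J')$ the order is inclusion of upper sets (equivalently containment of lower parts). If $U \leq V$ in $\pos(J,\Oo(J')^\op)$, meaning $U_j \supseteq V_j$ in $\Oo(J')^\op$'s underlying set for all $j$, i.e.\ $U_j \subseteq V_j$ as subsets wait — one must be careful with the two layers of $\op$; unwinding, $U \leq V$ in $\pos(J,\Oo(J')^\op)^\op$ means $V \leq U$ pointwise in $\Oo(J')^\op$, which means $V_j \subseteq U_j$ as subsets of $J'$ for all $j$, hence $\gr_V \subseteq \gr_U$, i.e.\ $\gr_U \leq \gr_V$ in $\Oo(J\times J')$; and conversely $\gr_U \subseteq \gr_V$ gives $U_j \subseteq V_j$ for all $j$ by taking slices. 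So $\gr$ and $S \mapsto U^S$ are inverse monotone maps, hence an isomorphism of posets. I expect the main obstacle to be purely bookkeeping: keeping the nested opposite-order conventions straight (the $\op$ on $\Oo(J')$ and the outer $\op$ on the Hom-poset) so that the direction of the inclusions matches up correctly; the mathematical content beyond Lemma \ref{decreasing-gives-upper-set} is negligible.
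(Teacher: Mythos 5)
Your proposal is correct and follows essentially the same route as the paper: both construct the explicit inverse sending an upper set $\tilde{U}\subseteq J\times J'$ to its slice-wise map $j\mapsto\{j':(j,j')\in\tilde{U}\}$, verify via Lemma \ref{decreasing-gives-upper-set} that this lands in $\pos(J,\Oo(J')^\op)$, and check monotonicity in both directions (your unwinding of the nested $\op$'s, yielding $\gr_U\leq\gr_V$ from the reverse inclusion of upper sets, matches the paper's convention and computation exactly). No gaps.
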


\begin{proof}
Let $U_1\leq U_2$ in the partial order on $\pos(J,\Oo(J')^\op)^\op$, and let $(j,j')\in \gr_{U_2}$. Then $j'\in U_{2;k}\subseteq U_{1,j}$ and so $(j,j')\in \gr_{U_1}$. This means $\gr_{U_1}\leq \gr_{U_2}$ in $\Oo(J \times J')$. Next, for any morphism of posets $U\colon J\to \Oo(J)^\op$ we have that $(j,j')\in \Gamma_{U+1}$ if and only if $j'\in (U+1)_j$
  Pick and upper set $\tilde{U}$ of $J \times J'$ and set, for $j \in J$ $$U_{\tilde{U}}(j)=\{ j' \in J'; \quad  (j,j') \in \tilde{U} \}.$$
  The subset $U_{\tilde{U}}(j)\subseteq J'$ is an upper set. Indeed, if $j'\in U_{\tilde{U}}(j)$ and $k'\geq j'$ in $J'$, then $(j,k')\geq (j,j')$ in the product order on $J\times J'$ and so $(j,k')\in \tilde{U}$ as $\tilde{U}$ is an upper set. Next, if $j\leq k$ in $J$ and $j'\in U_{\tilde{U}}(j)$, then $(k,j')\in \tilde{U}$ as $\tilde{U}$ is an upper set, and so $j'\in U_{\tilde{U}}(k)$. This shows that $U_{\tilde{U}}(j)\subseteq U_{\tilde{U}}(k)$, and so $U_{\tilde{U}}$ is a map of posets from $J$ to $\Oo(J')^\op$. Moreover, if $\tilde{U}_1\leq \tilde{U}_2$ in $\Oo(J\times J')$ then $U_{\tilde{U}_1}\leq U_{\tilde{U}_2}$ in $\pos(J,\Oo(J')^\op)^\op$. Therefore we have a map
   \begin{align*}
\gamma\colon   \Oo(J \times J') &\to \pos(J,\Oo(J')^\op)^\op \\
  \tilde{U} & \mapsto U_{\tilde{U}}.
  \end{align*}
  which is straightforward to see to be the inverse of $\gr$. 
\end{proof}

\subsection{Perversities}

Assume $J$ and $J'$ are $\Z$-posets. Then $\Oo(J)^\op$ is a $\Z$-poset with the generator $1$ acting as $U\mapsto U-1$. The action of $\Z$ by conjugation on $\pos(J,\Oo(J')^\op)$ is therefore given by $(U\dotplus 1)_j=U_{j-1}-1$. To see that $U\dotplus 1$ is still a morphism of posets from $J$ to $\Oo(J')$, let $j\leq k$ in $J$. Then $(U\dotplus 1)_j=U_{j-1}-1\subseteq U_{k-1}-1=(U\dotplus 1)_k$. 
The conjugation action, however, does not define a structure of $\Z$-poset on $\pos(J,\Oo(J')^\op)$, as it is generally not true that $U\leq U\dotplus1$. This condition is indeed equivalent to $U_{j}\subseteq (U\dotplus 1)_{j}$, i.e., to $U_{j}+1\subseteq U_{j-1}$ for any $j$ in $J$, and this is not necessarily satisfied by a morphism of posets $U\colon J\to \Oo(J')^\op$. 
%On the other hand, a morphism of posets $U\colon J\to \Oo(J')^\op$ surely satisfies $U_{j-1}\subseteq U_{j}$. This motivates the following.
\begin{defn}
A \emph{$(J,J')$-perversity} is a map of posets $U\colon J\to \Oo(J')^\op$ such that 
\[
U_{j}+1\subseteq U_{j-1}%\subseteq U_{j}
\]
for any $j$ in $J$. The set $\mathrm{Perv}(J,J')$ of all $(J,J')$-perversities inherits a poset structure from the inclusion in $\pos(J,\Oo(J')^\op)^\op$. It is a $\Z$-poset with $1$ acting as $U\mapsto U\dotplus(-1)$.
\end{defn}
\begin{rem}
Notice that the shift action on $(J,J')$-perversities is given by  $U\mapsto U\dotplus(-1)$. Namely, by construction the $\Z$-action $U\mapsto U\dotplus 1$ is monotone on the set of perversities with the order induced by the inclusion in $\pos(J,\Oo(J')^\op)$, and the order on $\mathrm{Perv}(J,J')$ is the opposite one. The reason for considering this order is, clearly, Proposition \ref{graphs}.
\end{rem}

We have seen in Lemma \ref{decreasing-gives-upper-set} that a function $f\colon J\to J'$ defines a morphism of posets $(\geq f)\colon J\to \Oo(J')^\op$ if and only if $f$ is a morphism of posets from $J$ to $J'^\op$. When this happens, $(\geq f)$ is a $(J,J')$-perversity if and only if $f(j-1)\leq f(j)+1$, for every $j\in J$. In the particular case $J=\Z$, 
assuming as usual that  $J'$ is a $\Z$-poset, we can define a new function $p_f\colon \Z\to J'$ as $p_f(n)=f(n)+n$. Then the condition $f(n)\leq f(n+1)+1$ translates into 
$p_f(n)\leq p_f(n+1)$, while the condition that $f\colon \Z\to J'^\op$ is a morphism of posets, i.e., $f(n+1)\leq f(n)$ translates to $p_f(n+1)\leq p_f(n)+1$.  As $f\mapsto p_f$ is a bijection of the set of maps from $J$ to $J'$ into itself, we see that the functions $f\colon \Z\to J'$ defining perversities correspond bijectively to the set of functions $p\colon \Z \to J'$ such that
$p(n)\leq p(n+1)\leq p(n)+1$, 
for every $n\in \Z$. This motivates the following (see \cite{bbd}).

\begin{defn}
A $(\Z,\Z)$-perversity $U$ is said to be defined by a function if there exists $f\colon \Z\to \Z$ such that $U=(\geq f)$. The subset of $(\Z,\Z)$-perversities defined by functions is denoted by $\mathrm{Perv}^\circ(\Z,\Z)$.
\end{defn}
\begin{lem}\label{lemma.perv0}
The subset $\mathrm{Perv}^\circ(\Z,\Z)$ is a $\Z$-sub-poset of $\mathrm{Perv}(\Z,\Z)$. It is isomorphic via $f\mapsto (\geq f)$ with the $\Z$-poset of nonincreasing functions $f\colon \Z\to \Z$ such that $f(n-1)\leq f(n)+1$ with the $\Z$-action given by $(f\dotplus 1)(n)=f(n+1)+1$.
\end{lem}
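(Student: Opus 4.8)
The plan is to realise $f\mapsto(\geq f)$ as a bijection onto $\mathrm{Perv}^\circ(\Z,\Z)$ and then to transport the poset structure and $\Z$-action back to its source. Write $S$ for the set of nonincreasing functions $f\colon\Z\to\Z$ with $f(n-1)\leq f(n)+1$. By the discussion preceding the statement, which rests on Lemma \ref{decreasing-gives-upper-set}, a function $f$ produces a morphism of posets $(\geq f)\colon\Z\to\Oo(\Z)^\op$ exactly when $f$ is nonincreasing, and such a morphism is a $(\Z,\Z)$-perversity exactly when in addition $f(j-1)\leq f(j)+1$ for all $j$, because $(\geq f)_j+1=[f(j)+1,+\infty)$ is contained in $(\geq f)_{j-1}=[f(j-1),+\infty)$ if and only if $f(j-1)\leq f(j)+1$. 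Thus $f\mapsto(\geq f)$ maps $S$ into $\mathrm{Perv}^\circ(\Z,\Z)$; it is onto by the very definition of $\mathrm{Perv}^\circ(\Z,\Z)$, and it is injective because $f(j)=\min(\geq f)_j$ recovers $f$ from $(\geq f)$.

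Next I would pin down the order. By construction $\mathrm{Perv}(\Z,\Z)$, hence its subset $\mathrm{Perv}^\circ(\Z,\Z)$, carries the order inherited from $\pos(\Z,\Oo(\Z)^\op)^\op$; unwinding this, $(\geq f_1)\leq(\geq f_2)$ in $\mathrm{Perv}(\Z,\Z)$ means $(\geq f_2)_j\leq(\geq f_1)_j$ in $\Oo(\Z)^\op$ for every $j$, i.e. $[f_2(j),+\infty)\subseteq[f_1(j),+\infty)$ for every $j$, i.e. $f_1(j)\leq f_2(j)$ for every $j$. So $f\mapsto(\geq f)$ is an isomorphism of posets from $S$ with the pointwise order onto $\mathrm{Perv}^\circ(\Z,\Z)$.

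Finally I would compute the $\Z$-action. The generator $1\in\Z$ acts on $\mathrm{Perv}(\Z,\Z)$ by $U\mapsto U\dotplus(-1)$, and inverting $(U\dotplus1)_j=U_{j-1}-1$ gives $(U\dotplus(-1))_j=U_{j+1}+1$; applied to $U=(\geq f)$ this yields $\bigl((\geq f)\dotplus(-1)\bigr)_j=[f(j+1),+\infty)+1=[f(j+1)+1,+\infty)=(\geq g)_j$ with $g(n)=f(n+1)+1$. In particular $\mathrm{Perv}^\circ(\Z,\Z)$ is stable under the $\Z$-action, hence is a $\Z$-sub-poset of $\mathrm{Perv}(\Z,\Z)$, and the $\Z$-action induced on $S$ through the isomorphism above is precisely $(f\dotplus1)(n)=f(n+1)+1$ (here $g\in S$ automatically, being the preimage of $(\geq g)\in\mathrm{Perv}^\circ(\Z,\Z)$). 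Every step is a routine unwinding of definitions; the one point that genuinely needs care is bookkeeping the stack of opposite-order conventions --- on $\Oo(\Z)^\op$, on $\pos(\Z,\Oo(\Z)^\op)^\op$, and the one making $U\dotplus(-1)$, not $U\dotplus1$, the action of $1\in\Z$ on perversities --- since it is exactly these that make the induced order on $S$ the pointwise order rather than its reverse, and the induced action $f(n+1)+1$ rather than its inverse $f(n-1)-1$.
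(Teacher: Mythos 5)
Your proposal is correct and follows essentially the same route as the paper: identify which functions $f$ yield perversities via the discussion after Lemma \ref{decreasing-gives-upper-set}, note the bijection (the paper says $f$ is recovered from the upper sets $[f(n),+\infty)$, you say $f(j)=\min(\geq f)_j$), check $(\geq f_1)\leq(\geq f_2)$ iff $f_1\leq f_2$ pointwise, and verify equivariance by the same computation $((\geq f)\dotplus(-1))_n=[f(n+1)+1,+\infty)$. Your extra care with the opposite-order conventions and the explicit check that $\mathrm{Perv}^\circ(\Z,\Z)$ is stable under the action only make explicit what the paper leaves implicit.
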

\begin{proof}
As a function $f\colon \Z\to \Z$ is uniquely determined by the collection of upper sets $[f(n),+\infty)$ the set $\mathrm{Perv}^\circ(J,J')$ bijectivley corresponds to the set of those functions $f\colon \Z\to \Z$ such that $(\geq f)$ is a $(\Z,\Z)$-perversity. As noticed above, these are precisely nonincreasing functions from $\Z$ to itself such that  $f(n-1)\leq f(n)+1$. This bijection is an isomorphism of posets, as $(\geq f_1)\leq (\geq f_2)$ if and only if $f_1\leq f_2$ (in the standard poset structure on the set of maps from $\Z$ to the poset $\Z$). Finally, $((\geq f)\dotplus (-1))_n=(\geq f)_{n+1}+1=[f(n+1)+1,+\infty)=(\geq (f\dotplus 1))_n$, for any $n\in \Z$.
\end{proof}

\begin{defn}\label{defperv}
A \emph{perversity function} (on $\Z$) is a function $p\colon \Z\to \Z$ such that
\[
p(n)\leq p(n+1)\leq p(n)+1,
\]
for every $n\in \Z$. It is called a strict perversity function if 
\[
p(n+2)-1\leq p(n)\leq p(n+1)\leq p(n)+1,
\]
for every $n\in \Z$. The set $\mathrm{perv}_\Z$ of perversity functions is a poset with the partial order induced by the inclusion $\mathrm{perv}_\Z\subseteq \pos(\Z,\Z)$. It is a $\Z$-poset with the action $(p\dotplus1)(n)=p(n+1)$. 
\end{defn}
\begin{rem}
A perversity function (on $\Z$) can be equivalently defined as a function $p\colon \Z\to \Z$ such that
\[
0\leq p(n)-p(m)\leq n-m
\]
for any $n-m\geq 0$ and the strictness condition translates to the additional condition $p(n)-p(m)< n-m$ for $n-m\geq 2$.

Basic examples of perversity functions are the zero perversity $p(n)\equiv 0$ and the identity perversity $p(n)\equiv n$. Another classical example is the \emph{middle perversity} $p(n)\equiv \lfloor n/2 \rfloor$. In particular, both the zero perversity and the middle perveristy are examples of strict perversities.
\end{rem}

\begin{rem}\label{other-action}
In addition to the $\Z$-action $p\mapsto p\dotplus 1$, the poset  $\mathrm{perv}_\Z$ carries also another natural $\Z$-action making it a $\Z$-poset; namely, the action $(p+1)(n)=p(n)+1$. We will come back to this towards the end of this section.
\end{rem}

\begin{rem}
Notice that the $\Z$-action on perversity functions is a monotone action since, by definition of perversity function, we have $p(n)\leq p(n+1)$ and this precisely means $p(n)\leq (p\dotplus1)(n)$.
\end{rem}
\begin{lem}
For every $p\colon \Z \to \Z$, let $f_p\colon \Z\to \Z$ be the map defined by $f_p(n)= p(n)-n$. Then $p\mapsto (\geq f_p)$ is an isomorphism of $\Z$-posets between $\mathrm{perv}_\Z$ and $\mathrm{Perv}^\circ(\Z,\Z)$
\end{lem}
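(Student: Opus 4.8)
The plan is to factor the claimed map through the isomorphism $f\mapsto(\geq f)$ established in Lemma~\ref{lemma.perv0}. That lemma identifies $\mathrm{Perv}^\circ(\Z,\Z)$, as a $\Z$-poset, with the poset $F$ of nonincreasing functions $f\colon\Z\to\Z$ satisfying $f(n-1)\leq f(n)+1$, equipped with the pointwise order and the $\Z$-action $(f\dotplus 1)(n)=f(n+1)+1$. Hence it suffices to prove that $p\mapsto f_p$, with $f_p(n)=p(n)-n$, is an isomorphism of $\Z$-posets $\mathrm{perv}_\Z\xrightarrow{\sim}F$; the statement then follows by postcomposing with $f\mapsto(\geq f)$.

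First I would check that $f_p$ belongs to $F$ whenever $p\in\mathrm{perv}_\Z$: since $f_p(n+1)-f_p(n)=p(n+1)-p(n)-1\leq 0$ by $p(n+1)\leq p(n)+1$, the function $f_p$ is nonincreasing, and since $f_p(n-1)-f_p(n)=p(n-1)-p(n)+1\leq 1$ by $p(n-1)\leq p(n)$, the condition $f_p(n-1)\leq f_p(n)+1$ holds. Conversely, for $f\in F$ put $p_f(n)=f(n)+n$; then $p_f(n+1)-p_f(n)=f(n+1)-f(n)+1$ is $\geq 0$ because $f(n)\leq f(n+1)+1$ and $\leq 1$ because $f$ is nonincreasing, so $p_f\in\mathrm{perv}_\Z$. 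The two assignments are visibly mutually inverse, as $f_{p_f}(n)=f(n)+n-n=f(n)$ and $p_{f_p}(n)=p(n)-n+n=p(n)$, so $p\mapsto f_p$ is a bijection.

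It remains to check compatibility with orders and actions, which is routine. For the order: $p_1\leq p_2$ pointwise if and only if $p_1(n)-n\leq p_2(n)-n$ for all $n$, i.e.\ $f_{p_1}\leq f_{p_2}$ pointwise, so $p\mapsto f_p$ and its inverse are both monotone and the bijection is an isomorphism of posets. For the $\Z$-action: using $(p\dotplus 1)(n)=p(n+1)$ on $\mathrm{perv}_\Z$ and $(f\dotplus 1)(n)=f(n+1)+1$ on $F$, we compute $f_{p\dotplus 1}(n)=(p\dotplus 1)(n)-n=p(n+1)-n$ and $(f_p\dotplus 1)(n)=f_p(n+1)+1=p(n+1)-(n+1)+1=p(n+1)-n$, so $f_{p\dotplus 1}=f_p\dotplus 1$, as needed.

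There is no real obstacle here; the only point requiring care is the bookkeeping of the several $\Z$-actions in play. One must intertwine the shift action $p\mapsto p\dotplus 1$ on $\mathrm{perv}_\Z$ --- not the other action $p\mapsto p+1$ of Remark~\ref{other-action} --- with the action on $\mathrm{Perv}^\circ(\Z,\Z)$ inherited from $\Oo(\Z)^\op$, whose translation under $f\mapsto(\geq f)$ is exactly the action $f\mapsto f\dotplus 1$ normalized in Lemma~\ref{lemma.perv0}.
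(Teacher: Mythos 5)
Your proof is correct and follows essentially the same route as the paper's: both reduce via Lemma~\ref{lemma.perv0} to showing that $p\mapsto f_p$ is a monotone $\Z$-equivariant bijection onto the poset of nonincreasing functions $f$ with $f(n-1)\leq f(n)+1$, with the same order and action computations. Your version merely spells out the well-definedness checks in both directions, which the paper leaves implicit.
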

\begin{proof}
By Lemma \ref{lemma.perv0} we only need to show that $p\mapsto f_p$ is a monotone $\Z$-equivariant bijection between  $\mathrm{perv}_\Z$ and the set of functions $f\colon \Z\to \Z$ such that $f(n)\leq f(n-1)\leq f(n)+1$. That it is a bijection is immediate: the inverse map is $f\mapsto p_f$, where $p_f(n)=f(n)+n$. To see that it is an isomorphism of posets, notice that $f_{p_1}\leq f_{p_2}$ if and only if $p_1(n)-n\leq p_2(n)-n$ for every $n\in \Z$, and so if and only if $p_1(n)\leq p_2(n)$ for every $n\in \Z$. Finally, $f_{p\dotplus1}(n)=(p\dotplus 1)(n)-n=p(n+1)-n=f_p(n+1)+1=(f_p\dotplus 1)(n)$.
\end{proof}

\subsubsection{Perversities as slicings of the lattice $\Z\times \Z$}

\begin{defn}
An upper set $U$ in $\Oo(J\times J')$ is called a \emph{kinky upperset} if $U\leq U+_\mathrm{ne}1$, where ${\mathrm{ne}}$ is the ``northwestern'' action of $\Z$ on $J\times J'$ given by
$(j,j')+_{\mathrm{ne}}1=(j-1,j'+1)$. We denote by $\mathrm{Kink}(J\times J')$ the poset of kinky uppersets of $J\times J'$, with the poset structure induced by the inclusion in $\Oo(J\times J')$. it is a $\Z$-poset with the ``northwestern'' action. We denote by $\mathrm{Kink}^\circ(J\times J')$ the $\Z$-sub-poset of nontrivial kinky uppersets of $J\times J'$, where 
the trivial upperstes are $\emptyset$ and $J\times J'$.
\end{defn}

\begin{lem}
 The map $\Gamma$ from Proposition \ref{graphs} induces an isomorphism of $\Z$-posets
  \[
 \gr \colon \mathrm{Perv}(J,J')\to \mathrm{Kink}(J\times J').
  \]
\end{lem}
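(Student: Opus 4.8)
### Proof proposal

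The plan is to leverage Proposition~\ref{graphs}, which already gives an isomorphism of posets $\gr\colon \pos(J,\Oo(J')^{\op})^{\op}\to \Oo(J\times J')$, and then check that it restricts correctly on both sides and intertwines the two $\Z$-actions. So the proof naturally splits into three steps: (i) identify $\mathrm{Perv}(J,J')$ as a sub-poset of $\pos(J,\Oo(J')^{\op})^{\op}$ and $\mathrm{Kink}(J\times J')$ as a sub-poset of $\Oo(J\times J')$; (ii) show $\gr$ carries the first onto the second, i.e. $U$ is a $(J,J')$-perversity iff $\gr_U$ is a kinky upperset; (iii) check $\Z$-equivariance.

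First I would set up the translation of the defining inequalities. By definition $U\in\pos(J,\Oo(J')^{\op})^{\op}$ is a $(J,J')$-perversity precisely when $U_j+1\subseteq U_{j-1}$ for all $j\in J$, which is exactly the condition $U\leq U\dotplus 1$ in the inclusion order on $\pos(J,\Oo(J')^{\op})$, i.e. $U\dotplus(-1)\leq U$ in the opposite order that $\mathrm{Perv}(J,J')$ carries. On the other side, $\tilde U\in\Oo(J\times J')$ is a kinky upperset precisely when $\tilde U\leq \tilde U+_{\mathrm{ne}}1$ in $\Oo(J\times J')$. So the heart of the matter is to verify that under $\gr$ the operation $U\mapsto U\dotplus 1$ on the domain corresponds to $\tilde U\mapsto \tilde U+_{\mathrm{ne}}1$ on the codomain; once this is known, the sub-poset claim (ii) follows immediately from the order-isomorphism of Proposition~\ref{graphs} together with monotonicity of these two $\Z$-actions. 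This is also the step I expect to be the only real computation: one must chase $(j,j')\in \gr_{U\dotplus 1}$ $\iff$ $j'\in U_{j-1}-1$ $\iff$ $j'+1\in U_{j-1}$ $\iff$ $(j-1,j'+1)\in \gr_U$ $\iff$ $(j,j')\in \gr_U +_{\mathrm{ne}} 1$, using the explicit description $(U\dotplus 1)_j=U_{j-1}-1$ recorded before Definition of $(J,J')$-perversity, and the "northwestern" action $(j,j')+_{\mathrm{ne}}1=(j-1,j'+1)$. Hence $\gr_{U\dotplus 1}=\gr_U+_{\mathrm{ne}}1$.

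Granting $\gr_{U\dotplus 1}=\gr_U+_{\mathrm{ne}}1$, step (ii) is formal: $\gr$ is an order isomorphism, so $U\leq U\dotplus 1$ iff $\gr_U\leq \gr_{U\dotplus 1}=\gr_U+_{\mathrm{ne}}1$, i.e. $U$ is a $(J,J')$-perversity iff $\gr_U$ is kinky; and the inverse $\gamma$ of Proposition~\ref{graphs} sends kinky uppersets back to perversities by the same equivalence. For step (iii), the $\Z$-action on $\mathrm{Perv}(J,J')$ is by definition $U\mapsto U\dotplus(-1)$ and that on $\mathrm{Kink}(J\times J')$ is the "northwestern" action $\tilde U\mapsto \tilde U+_{\mathrm{ne}}1$; applying the relation from step (i) with $U$ replaced by $U\dotplus(-1)$ gives $\gr_{U\dotplus(-1)}=\gr_{(U\dotplus(-1))\dotplus 1}+_{\mathrm{ne}}(-1)$, wait---more directly, from $\gr_{V\dotplus 1}=\gr_V+_{\mathrm{ne}}1$ with $V=U\dotplus(-1)$ we get $\gr_U=\gr_{U\dotplus(-1)}+_{\mathrm{ne}}1$, hence $\gr_{U\dotplus(-1)}=\gr_U+_{\mathrm{ne}}(-1)$, which is exactly $\Z$-equivariance of $\gr$ for the generator $1$. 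Therefore $\gr$ restricts to an isomorphism of $\Z$-posets $\mathrm{Perv}(J,J')\to \mathrm{Kink}(J\times J')$, and this finishes the proof. The main obstacle, such as it is, is purely bookkeeping: keeping straight the four nested "op" conventions (the opposite order on $\Oo(J')$, the opposite order on $\pos(J,\Oo(J')^{\op})$, and the two competing $\Z$-actions $U\mapsto U\dotplus 1$ versus $U\mapsto U\dotplus(-1)$) so that the monotonicity directions line up and the action matches the "northwestern" one rather than its inverse.
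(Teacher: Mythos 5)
Your strategy is the same as the paper's: restrict the isomorphism of Proposition \ref{graphs}, show that the perversity condition on $U$ corresponds to kinkiness of $\gr_U$, and check equivariance of the generator actions; and the membership chase $(j,j')\in\gr_{U\dotplus 1}\iff j'+1\in U_{j-1}\iff (j-1,j'+1)\in\gr_U$ is exactly the computation the paper performs (there done with $U\dotplus(-1)$). But the last link of your chain has a sign error that infects the conclusion. Since $\tilde U+_{\mathrm{ne}}1=\{(k-1,k'+1)\colon (k,k')\in\tilde U\}$, the condition $(j-1,j'+1)\in\gr_U$ says $(j,j')\in\gr_U+_{\mathrm{ne}}(-1)$; membership of $(j,j')$ in $\gr_U+_{\mathrm{ne}}1$ is instead tested by $(j+1,j'-1)\in\gr_U$. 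Hence the correct identity is $\gr_{U\dotplus 1}=\gr_U+_{\mathrm{ne}}(-1)$, equivalently $\gr_{U\dotplus(-1)}=\gr_U+_{\mathrm{ne}}1$; your claimed $\gr_{U\dotplus 1}=\gr_U+_{\mathrm{ne}}1$ is false. This matters in step (iii): with the actions you yourself record ($1$ acting as $U\mapsto U\dotplus(-1)$ on $\mathrm{Perv}(J,J')$ and as $+_{\mathrm{ne}}1$ on $\mathrm{Kink}(J\times J')$), equivariance for the generator reads $\gr_{U\dotplus(-1)}=\gr_U+_{\mathrm{ne}}1$, whereas you end by asserting $\gr_{U\dotplus(-1)}=\gr_U+_{\mathrm{ne}}(-1)$ and calling it ``exactly $\Z$-equivariance''; that identity is false, and even if it held it would say that $\gr$ intertwines the action with its inverse rather than being equivariant.

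The repair is local: correcting the translation step gives precisely $\gr_{U\dotplus(-1)}=\gr_U+_{\mathrm{ne}}1$, which is the identity the paper proves and is the equivariance you need. Your step (ii) then also goes through, but keep the two op's straight: with the paper's conventions, $U\leq V$ pointwise (i.e., $U_j\subseteq V_j$ for all $j$) corresponds to $\gr_U\subseteq\gr_V$, and kinkiness of $\gr_U$ means $\gr_U+_{\mathrm{ne}}1\subseteq\gr_U$, which via the membership chase is literally the condition $U_j+1\subseteq U_{j-1}$ for all $j$ (this direct check in both directions is in fact all the paper does for that part). Relatedly, your rewriting of the perversity condition as ``$U\dotplus(-1)\leq U$ in the opposite order'' should be ``$U\dotplus 1\leq U$ in the opposite order''. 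In short: right approach, same as the paper's, but as written the equivariance step verifies a false identity and the sign must be fixed for the argument to close.
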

\begin{proof}
Let $U\colon J\to \Oo(J')^\op$ be a perversity, and let $(j,j')\in \Gamma_U$. Then $j'\in U_{j}$ and so, by definition of perversity, $j'+1\in U_{j-1}$. Therefore $(j-1,j'+1)\in \Gamma_U$, i.e., $\Gamma_U\leq \Gamma_U+_{\mathrm{ne}} 1$. Vice versa, if $\tilde{U}$ is a kinky upperset in $J\times J'$, let $U_{\tilde{U}}$ the preimage in $\pos(J,\Oo(J')^\op)^\op$ of $\tilde{U}$ via $\Gamma$ (see the proof of Proposition \ref{graphs}). Then for any $j\in J$ and any $j'\in U_{\tilde{U};j}$ we have $j'+1\in U_{\tilde{U};j-1}$ and so $U_{\tilde{U};j}+1\subseteq U_{\tilde{U};j-1}$. So the isomorphism of posets $\pos(J,\Oo(J')^\op)^\op\to \Oo(J \times J')$ restricts to an isomorphism of posets $\mathrm{Perv}(J,J')\to \mathrm{Kink}(J\times J')$. To see that $ \gr \colon \mathrm{Perv}(J,J')\to \mathrm{Kink}(J\times J')$ is also $\Z$-equivariant, notice that, for every perversity $U$ we have $(j,j')\in \Gamma_{U\dotplus (-1)}$ if and only if $j'\in U_{j+1}+1$. i.e., if and only if $(j+1,j'-1)\in \Gamma_U$. This latter condition is equivalent to $(j,j')\in \Gamma_U+_{\mathrm{nw}}1$, so we find $\Gamma_{U\dotplus (-1)}=\Gamma_U+_{\mathrm{nw}}1$.
\end{proof}

\begin{lem}
The map $\Gamma$ from Proposition \ref{graphs} induces an isomorphism of $\Z$-posets
  \[
 \gr \colon \mathrm{Perv}^\circ(\Z,\Z)\to \mathrm{Kink}^\circ(\Z\times \Z).
  \]
\end{lem}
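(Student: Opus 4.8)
The plan is to leverage the two isomorphisms already established: $\gr\colon \mathrm{Perv}(\Z,\Z)\to \mathrm{Kink}(\Z\times\Z)$ (previous lemma) and $\gr\colon \pos(\Z,\Oo(\Z)^\op)^\op\to \Oo(\Z\times\Z)$ (Proposition \ref{graphs}), so that nothing genuinely new needs to be proved about $\gr$ itself. Since $\mathrm{Perv}^\circ(\Z,\Z)$ is by definition a $\Z$-sub-poset of $\mathrm{Perv}(\Z,\Z)$ (Lemma \ref{lemma.perv0}), and $\mathrm{Kink}^\circ(\Z\times\Z)$ is a $\Z$-sub-poset of $\mathrm{Kink}(\Z\times\Z)$ by definition, it suffices to check that the isomorphism $\gr\colon \mathrm{Perv}(\Z,\Z)\to\mathrm{Kink}(\Z\times\Z)$ restricts to a bijection between these two sub-posets; being the restriction of an isomorphism of $\Z$-posets to sub-$\Z$-posets identified under it, it is then automatically an isomorphism of $\Z$-posets.

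So the real content is a single set-theoretic equivalence: for a perversity $U\colon \Z\to\Oo(\Z)^\op$, the graph $\gr_U$ is a \emph{nontrivial} kinky upperset if and only if $U$ lies in $\mathrm{Perv}^\circ(\Z,\Z)$, i.e. $U=(\geq f)$ for some function $f\colon \Z\to\Z$. First I would record that $\gr_U$ is trivial (equal to $\emptyset$ or all of $\Z\times\Z$) exactly when some slice $U_j$ is $\emptyset$ or all of $\Z$; since the slices of a perversity are nested uppersets $\cdots\supseteq U_{j-1}\supseteq U_j\supseteq\cdots$ with $U_j+1\subseteq U_{j-1}$, and $\Z$ acts nontrivially, one sees that a single slice being trivial forces $\gr_U$ itself to be trivial, and conversely. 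Then I would observe that an upperset of $\Z$ is of the form $[m,+\infty)$ for a finite $m$ precisely when it is neither $\emptyset$ nor $\Z$ — this uses that $\Z$ has no infinite descending or ascending bounded chains, i.e. every proper nonempty upperset has a minimum. Hence $\gr_U$ nontrivial $\iff$ every slice $U_j$ is a proper nonempty upperset $\iff$ every $U_j=[f(j),+\infty)$ for a well-defined $f(j)\in\Z$ $\iff$ $U=(\geq f)$, which is exactly the condition $U\in\mathrm{Perv}^\circ(\Z,\Z)$. Combined with the previous lemma, this gives the restriction we want.

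I would then close the argument by noting compatibility with the $\Z$-actions is inherited for free: the northwestern action preserves nontriviality (it fixes $\emptyset$ and $\Z\times\Z$, and sends nontrivial uppersets to nontrivial ones, since it is an automorphism of $\Oo(\Z\times\Z)$), and on the perversity side the action $U\mapsto U\dotplus(-1)$ preserves $\mathrm{Perv}^\circ(\Z,\Z)$ by Lemma \ref{lemma.perv0}; since $\gr$ intertwines these by the previous lemma, the restriction is $\Z$-equivariant.

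The main obstacle — really the only place one must be slightly careful — is the ``nontriviality'' bookkeeping: one must be sure that $\gr_U$ is trivial exactly when \emph{at least one} slice $U_j$ is trivial, not only when all of them are, and this relies on the kinky/perversity nesting $U_j+1\subseteq U_{j-1}$ together with the fact that $\Z$ is unbounded in both directions. For instance, if a single $U_{j_0}=\Z$ then all $U_j=\Z$ for $j\le j_0$, and the remaining $U_j$ for $j>j_0$ satisfy $U_j+k\subseteq U_{j-k}\subseteq U_{j_0}=\Z$ only vacuously, so one needs the separate observation that were some later slice proper and nonempty it would contradict $U_{j_0+1}\supseteq U_{j_0+1}$; more simply, $U_{j_0}=\Z$ together with $U_{j_0+1}+1\subseteq U_{j_0}$ imposes nothing, so one argues instead that if $\gr_U$ is to be a \emph{proper} upperset then no slice can be all of $\Z$, and dually no slice can be empty — spelling this bidirectional implication out carefully is the crux, and it is elementary. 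Everything else is formal transport along the isomorphisms already in hand.
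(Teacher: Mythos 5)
Your proposal is correct and follows essentially the same route as the paper: reduce to showing that a kinky upperset all of whose slices $U_n$ are proper and nonempty (hence of the form $[f(n),+\infty)$) is exactly one in the image of $\mathrm{Perv}^\circ(\Z,\Z)$, and that a single trivial slice $U_{n_0}\in\{\emptyset,\Z\}$ propagates to all slices, making $\gr_U$ trivial. One small slip worth fixing: monotonicity of a perversity gives $U_j\subseteq U_k$ for $j\leq k$ (not the reverse nesting you wrote), so in the case $U_{n_0}=\Z$ the slices with $n>n_0$ are handled directly by the upperset condition and those with $n<n_0$ by kinkiness ($U_n+1\subseteq U_{n-1}$), and dually for $U_{n_0}=\emptyset$ --- this is exactly the paper's argument and it dissolves the circularity in your final paragraph.
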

\begin{proof}
{Let $U$ be a kinky upper set that is not in the image of $\Gamma$. We want to show that $U=\emptyset$ or $U=\Z\times \Z$. To do this, we notice that}
a kinky upperset $U$ is in the image of $\Gamma$ if and only if $U$ is of the form $(\geq f)$ for a suitable function $f\colon \Z\to \Z$, {and that} this is possible if and only if $U_n\neq \emptyset,\Z$ for every $n\in \Z$.  As $U$ is kinky, if $U_{n_0}=\emptyset$ for some $n_0$, then $U_{n_0+1}+1\subseteq U_{n_0}=\emptyset$, and so $U_{n_0+1}=\emptyset$. Inductively, this gives $U_n=\emptyset$ for every $n\geq n_0$.  On the other hand, since a kinky upperset is an upperset, if there exists a nonempty $U_n$ with $n<n_0$, then there exist an element $(n,m)$ in $U$ and so, since $(n,m)\leq (n_0,m)$, also $ (n_0,m)\in U$. But then $m\in U_{n_0}$, which is impossible. So also the $U_n$ with $n<n_0$ are empty and therefore $U=\emptyset$. Similarly, if $U_{n_0}=\Z$ for some $n_0$, then $U_n=\Z$ for every $n>n_0$ as $U$ is an upperset, while the kinkiness condition $U_{n}+1\subseteq U_{n-1}$ implies that also $U_{n_0-1}=\Z$ and so, inductively, that all $U_n$ with $n<n_0$ are the whole of $\Z$. That is, $U=\Z\times\Z$ in this case. 
\end{proof}

\begin{lem}\label{equiv}
The isomorphism of $\Z$-modules $\varphi\colon \Z^2\to \Z^2$ given by $\varphi\colon (n,n')\mapsto (n+n',n')$ induces an isomorphism of $\Z$-posets 
\[
\varphi\colon \mathrm{Kink}(\Z\times \Z)\xrightarrow{\sim} \Oo(\Z\times \Z),
\]
where the $\Z$-action on $\Oo(\Z\times \Z)$ is the one induced by the ``northern'' $\Z$-action on $\Z^2$, namely, $(n,n')+1=(n,n'+1)$. In particular $\varphi$ induces an isomorphism of $\Z$-posets $\mathrm{Kink}^{\circ}(\Z\times \Z)\xrightarrow{\sim} \Oo(\Z\times \Z)\setminus\{\emptyset,\Z\times\Z\}$.
\end{lem}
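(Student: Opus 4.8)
The plan is to reduce the statement to an elementary fact about translation-closed subsets of $\Z^2$, exploiting only that $\varphi$ is a bijection of the underlying set. First I would rewrite both sides as monoid-closure conditions. Let $M$ be the submonoid of $\Z^2$ generated by $(1,0)$ and $(0,1)$, i.e. the positive cone of the product order; then a subset $V\subseteq\Z^2$ belongs to $\Oo(\Z\times\Z)$ precisely when $V+M\subseteq V$. On the other side, I would unwind the definition of a kinky upper set, bearing in mind that the order on $\Oo$ is by \emph{reverse} inclusion, so that the inequality $U\leq U+_{\mathrm{ne}}1$ reads $U+_{\mathrm{ne}}1\subseteq U$, i.e. $U+(-1,1)\subseteq U$. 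Thus a kinky upper set is exactly a subset $U$ with $U+(1,0)\subseteq U$ and $U+(-1,1)\subseteq U$; note that $U+(0,1)\subseteq U$ is then automatic, since $(0,1)=(1,0)+(-1,1)$. Writing $M'$ for the submonoid generated by $(1,0)$ and $(-1,1)$, this says $U\in\mathrm{Kink}(\Z\times\Z)$ precisely when $U+M'\subseteq U$.

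The key point is that $\varphi$ carries $M'$ onto $M$: indeed $\varphi(1,0)=(1,0)$ and $\varphi(-1,1)=(0,1)$, so $\varphi$ restricts to a monoid isomorphism $M'\xrightarrow{\sim}M$. Since $\varphi$ is additive and bijective, $\varphi(U+M')=\varphi(U)+\varphi(M')=\varphi(U)+M$, whence $U+M'\subseteq U$ if and only if $\varphi(U)+M\subseteq\varphi(U)$. By the previous paragraph this means precisely that $U\mapsto\varphi(U)$ restricts to a bijection $\mathrm{Kink}(\Z\times\Z)\to\Oo(\Z\times\Z)$, with inverse induced by $\varphi^{-1}$. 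It is an isomorphism of posets because a bijection of $\Z^2$ preserves inclusions of subsets and both sides carry the reverse-inclusion order. For $\Z$-equivariance I would just note $\varphi(U+_{\mathrm{ne}}1)=\varphi(U+(-1,1))=\varphi(U)+\varphi(-1,1)=\varphi(U)+(0,1)$, which is the northern shift of $\varphi(U)$. Finally, since $\varphi$ is a bijection of $\Z^2$ it fixes $\emptyset$ and $\Z\times\Z$, which are the two trivial kinky upper sets as well as the two trivial upper sets, so the isomorphism restricts to $\mathrm{Kink}^{\circ}(\Z\times\Z)\xrightarrow{\sim}\Oo(\Z\times\Z)\setminus\{\emptyset,\Z\times\Z\}$.

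I do not expect a genuine obstacle here: the only delicate spot is the first step, namely converting the kinkiness inequality $U\leq U+_{\mathrm{ne}}1$ into the translation-closure condition $U+(-1,1)\subseteq U$ using the reverse-inclusion convention on $\Oo$, and then observing that closure under $(0,1)$ comes for free; after that everything is a formal manipulation with the isomorphism $\varphi$.
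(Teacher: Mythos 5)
Your proof is correct and follows essentially the same route as the paper's: both recast upper sets and kinky upper sets as subsets closed under the cones (monoids) generated by $(1,0),(0,1)$ and $(1,0),(-1,1)$ respectively, and then observe that $\varphi$ carries one cone onto the other and the generator $(-1,1)$ of the kinky shift to the northern generator $(0,1)$. Your extra care in unwinding the reverse-inclusion convention and in noting that closure under $(0,1)$ is automatic is a welcome clarification, but it is the same argument.
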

\begin{proof}
A subset $U$ of $\Z\times \Z$ is an upper set (in the product order) if and only if $U+K\subseteq U$, where $K$ is the $\Z$-cone spanned by $\left(\begin{smallmatrix}1\\0\end{smallmatrix}\right)$ and $\left(\begin{smallmatrix}0\\1\end{smallmatrix}\right)$, while $U$ is a kinky upperset if and only if $U+K^\mathrm{kink}\subseteq U$, where $K^\mathrm{kink}$ is the $\Z$-cone spanned by $\left(\begin{smallmatrix}1\\0\end{smallmatrix}\right)$ and $\left(\begin{smallmatrix}-1\\1\end{smallmatrix}\right)$. Morever the $\Z$ action on the uppersets is generated by $U\mapsto U+\left(\begin{smallmatrix}0\\1\end{smallmatrix}\right)$ and the the $\Z$ action on the kinky uppersets is generated by $U\mapsto U+\left(\begin{smallmatrix}-1\\1\end{smallmatrix}\right)$, where in both cases the sum on the right hand side is the sum in $\Z^2$. As $\varphi \colon \Z^2\to \Z^2$ is an isomorphism of $\Z$-modules with $\varphi(K^\mathrm{kink})=K$ and $\varphi\left(\begin{smallmatrix}-1\\1\end{smallmatrix}\right)=\left(\begin{smallmatrix}0\\1\end{smallmatrix}\right)$, the statement follows ($\varphi$ is manifestly inclusion preserving).
\end{proof}

\begin{cor}\label{corperv}
We have an isomorphism of $\Z$-posets
\[
\mathrm{perv}_\Z\xrightarrow{\sim} \Oo(\Z\times \Z)\setminus\{\emptyset,\Z\times\Z\}
\]
mapping a perversity function $p$ to the image via the isomorphism $\varphi\colon (n,n')\mapsto (n+n',n')$ of the set $\{(n,n')\in \Z\times \Z\text{ such that } n'\geq p(n)-n\}$.
\end{cor}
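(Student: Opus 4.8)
The plan is to obtain the claimed isomorphism as a composite of the three isomorphisms of $\Z$-posets established immediately above, and then to read off its explicit form. Concretely: the lemma defining $f_p(n)=p(n)-n$ gives an isomorphism of $\Z$-posets $\mathrm{perv}_\Z\xrightarrow{\sim}\mathrm{Perv}^\circ(\Z,\Z)$, $p\mapsto(\geq f_p)$; the lemma asserting that $\gr$ restricts to an isomorphism gives an isomorphism of $\Z$-posets $\gr\colon\mathrm{Perv}^\circ(\Z,\Z)\xrightarrow{\sim}\mathrm{Kink}^\circ(\Z\times\Z)$; and Lemma \ref{equiv} gives an isomorphism of $\Z$-posets $\varphi\colon\mathrm{Kink}^\circ(\Z\times\Z)\xrightarrow{\sim}\Oo(\Z\times\Z)\setminus\{\emptyset,\Z\times\Z\}$. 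Composing these three maps produces an isomorphism of $\Z$-posets $\mathrm{perv}_\Z\xrightarrow{\sim}\Oo(\Z\times\Z)\setminus\{\emptyset,\Z\times\Z\}$, which is exactly the arrow the corollary claims to exhibit.

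It then remains only to unwind the composite into the stated closed form. Given a perversity function $p$, the element $(\geq f_p)\in\mathrm{Perv}^\circ(\Z,\Z)$ has upper graph $\gr_{(\geq f_p)}=\{(n,n')\in\Z\times\Z:\ n'\in[f_p(n),+\infty)\}=\{(n,n')\in\Z\times\Z:\ n'\geq p(n)-n\}$, and applying the isomorphism $\varphi\colon(n,n')\mapsto(n+n',n')$ to this kinky upperset yields precisely the subset of $\Z\times\Z$ named in the statement. Hence the composite is the described assignment verbatim, and no further computation is needed.

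The one place where a little care is required — and it is really the only content beyond bookkeeping — is to confirm that the three arrows are $\Z$-equivariant for the appropriate actions as one moves along the chain: the shift action $p\mapsto p\dotplus1$ on $\mathrm{perv}_\Z$, the conjugation action $U\mapsto U\dotplus(-1)$ on $\mathrm{Perv}^\circ(\Z,\Z)$, the northwestern action on $\mathrm{Kink}^\circ(\Z\times\Z)$, and the northern action on $\Oo(\Z\times\Z)$. Each of these compatibilities has already been verified in the corresponding lemma above, so assembling them into the statement of the corollary is immediate, and I expect no genuine obstacle here.
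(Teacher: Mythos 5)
Your proposal is correct and matches the paper's (implicit) argument exactly: the corollary is stated without a separate proof precisely because it is the composite of the isomorphism $p\mapsto(\geq f_p)$, the restricted graph isomorphism $\gr\colon\mathrm{Perv}^\circ(\Z,\Z)\to\mathrm{Kink}^\circ(\Z\times\Z)$, and Lemma \ref{equiv}, and your unwinding of the explicit formula and of the $\Z$-equivariances is the intended reading.
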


\begin{rem}
The zero perversity function $p(n)\equiv 0$ corresponds to the upper set $\{(n,n')\in \Z\times \Z\text{ such that } n\geq 0\}$; the identity perversity function $p(n)\equiv n$ corresponds to the upper set $\{(n,n')\in \Z\times \Z\text{ such that } n'\geq 0\}$.
\end{rem}

\begin{rem}\label{missing}
The two ``missing'' upper sets from $\Oo(\Z\times \Z)\setminus\{\emptyset,\Z\times\Z\}$ can be recovered by adding to the set $\mathrm{perv}_\Z$ of perversity functions the two ``constant infinite perversities'', i.e., the function $p_{+\infty}\colon \Z\to \Z\cup\{-\infty,+\infty\}$ defined by $p_{+\infty}(n)=+\infty$ for every $n\in\Z$ and the function  $p_{-\infty}\colon \Z\to \Z\cup\{-\infty,+\infty\}$ defined by $p_{-\infty}(n)=-\infty$ for every $n\in\Z$. The extended set
\[
\widehat{\mathrm{perv}}_\Z =\mathrm{perv}_\Z\cup\{p_{-\infty},p_{+\infty}\}
\]
is naturally a $\Z$-poset with $p_{+\infty}$ and $p_{-\infty}$ as maximum and minimum element, respectively (so they are in particular $\Z$-fixed points), and the inclusion of $\mathrm{perv}_\Z$ into $\widehat{\mathrm{perv}}_\Z$ is a morphism of $\Z$-posets. Moreover, the isomorphism of $\Z$-posets $\mathrm{perv}_\Z\xrightarrow{\sim} \Oo(\Z\times \Z)\setminus\{\emptyset,\Z\times\Z\}$ from Corollary \ref{corperv} extends to an isomorphism of $\Z$-posets
\[
\widehat{\mathrm{perv}}_\Z\xrightarrow{\sim} \Oo(\Z\times \Z).
\]
\end{rem}
As well as the $\Z$-action $p\mapsto p\dotplus 1$, also the action $p\mapsto p+1$ from Remark \ref{other-action} extends to a $\Z$-action of $\widehat{\mathrm{perv}}_\Z$ (again, $p_{+\infty}$ and $p_{-\infty}$ will be fixed points). The isomorphism of posets $\widehat{\mathrm{perv}}_\Z\xrightarrow{\sim} \Oo(\Z\times \Z)$ will then transfer this $\Z$-action to $\Oo(\Z\times \Z)$ inducing on $\Oo(\Z\times \Z)$ a $\Z$-action such that $\widehat{\mathrm{perv}}_\Z\xrightarrow{\sim} \Oo(\Z\times \Z)$ is an isomorphism of posets with the $\Z$-action $p\mapsto p+1$ on the left hand side. More precisely, we have the following result.
\begin{prop}
\label{prop-perv}
We have an isomorphism of posets
\[
\widehat{\mathrm{perv}}_\Z\xrightarrow{\sim} \Oo(\Z\times \Z)
\]
mapping a perversity function $p$ to the image via the isomorphism $\varphi\colon (n,n')\mapsto (n+n',n')$ of the set $S_p=\{(n,n')\in \Z\times \Z\text{ such that } n'\geq p(n)-n\}$, and mapping the ``infinite perversity functions'' $p_{-\infty}$ and $p_{+\infty}$ to $\Z\times \Z$ and to $\emptyset$, respectively. Moreover, this is an isomorphism of $\Z$-posets, where the $\Z$-action on the left is given by $(p+1)(n)=p(n)+1$ and the $\Z$-action on the right is the one induced by the ``northeastern'' $\Z$-action on $\Z^2$, namely, $(n,n')+1=(n+1,n'+1)$. 
\end{prop}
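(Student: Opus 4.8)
The plan is to deduce the statement from Corollary~\ref{corperv} and Remark~\ref{missing}: those already provide the underlying bijection and its compatibility with the poset structures, so the only point that still needs proof is the identification of the shift actions. First I would observe that the map in the statement is exactly the one of Corollary~\ref{corperv}, extended to $\widehat{\mathrm{perv}}_\Z$ by sending $p_{-\infty}$ to $\Z\times\Z$ and $p_{+\infty}$ to $\emptyset$, so by Remark~\ref{missing} it is a well-defined isomorphism of posets $\widehat{\mathrm{perv}}_\Z\xrightarrow{\sim}\Oo(\Z\times\Z)$. I would also record, for completeness, that $p\mapsto p+1$ is a $\Z$-poset action on $\widehat{\mathrm{perv}}_\Z$: the assignment $(p+1)(n)=p(n)+1$ leaves the differences $p(n)-p(m)$ unchanged, hence preserves the inequalities $0\le p(n)-p(m)\le n-m$ defining perversity functions, it fixes $p_{-\infty}$ and $p_{+\infty}$, it is monotone, and $p\le p+1$ pointwise. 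Likewise, translating an upper set of $\Z\times\Z$ by the vector $(1,1)$ yields an upper set and fixes the two trivial ones, so the ``northeastern'' action is a $\Z$-poset action on $\Oo(\Z\times\Z)$ (equivalently, it is the transport of the former along the isomorphism, once equivariance is established).

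The main step is the equivariance computation, which I would perform directly on finite perversities. For $p\in\mathrm{perv}_\Z$ we have $(p+1)(n)-n=\bigl(p(n)-n\bigr)+1$, so $(a,b)\in S_{p+1}$ if and only if $(a,b-1)\in S_p$; that is, $S_{p+1}=S_p+(0,1)$. Since $\varphi\colon\Z^2\to\Z^2$ is an automorphism of $\Z$-modules with $\varphi(0,1)=(1,1)$, we obtain $\varphi(S_{p+1})=\varphi(S_p)+(1,1)$, i.e.\ the image of $p+1$ is the northeastern translate of the image of $p$. For the two added points the claim is immediate: $p_{+\infty}+1=p_{+\infty}$ and $p_{-\infty}+1=p_{-\infty}$ are sent to $\emptyset$ and $\Z\times\Z$, which are fixed by the northeastern action. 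Hence the isomorphism intertwines the two $\Z$-actions everywhere, and therefore is an isomorphism of $\Z$-posets.

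I do not anticipate a serious obstacle: the content is essentially Corollary~\ref{corperv} together with a change of $\Z$-action, and the only place requiring attention is the bookkeeping with $\varphi$ — namely checking that the action $p\mapsto p+1$ induces on the sets $S_p$ precisely the vertical translation by $(0,1)$ (and not some other lattice vector), and that $\varphi$ converts this vertical translation into the diagonal translation by $(1,1)$. Everything else reduces to the routine verification of the poset-action axioms already used throughout this section.
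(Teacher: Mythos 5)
Your proposal is correct and follows essentially the same route as the paper: invoke Corollary \ref{corperv} and Remark \ref{missing} for the underlying poset isomorphism, then reduce the equivariance to $S_{p+1}=S_p+\left(\begin{smallmatrix}0\\1\end{smallmatrix}\right)$ and transport it through $\varphi$, which sends $\left(\begin{smallmatrix}0\\1\end{smallmatrix}\right)$ to $\left(\begin{smallmatrix}1\\1\end{smallmatrix}\right)$. The extra checks you record (that $p\mapsto p+1$ and the northeastern translation are genuine $\Z$-poset actions) are harmless additions the paper leaves implicit.
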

\begin{proof}
After Corollary \ref{corperv} and Remark \ref{missing}, the only thing left to prove is the $\Z$-equivariancy of the isomorphism, i.e., that we have
\[
\varphi(S_{p+1})=\varphi(S_p)+\left(\begin{smallmatrix}1\\1\end{smallmatrix}\right).
\]
As $\varphi$ is an isomorphism of $\Z$-modules, this is equivalent to
\[
S_{p+1}=S_p+\left(\begin{smallmatrix}0\\1\end{smallmatrix}\right),
\]
i.e., to the condition $(n,n')\in S_{p+1}$ if and only if $(n,n'-1)\in S_{p}$, which is obvious.
\end{proof}
Taking the opposite of the complement (or, equivalently, the complement of the opposite) gives an isomorphism of posets
\begin{align*}
\Oo(\Z\times \Z)&\xrightarrow{\sim} \Oo(\Z\times \Z)^{\mathrm{op}}\\
U&\mapsto (\Z\times \Z)\setminus (-U),
\end{align*}
where $-U=\{(-n,-n')\text{ with } (n,n')\in U\}$. This isomorphisms changes the northeastern action in its opposite, i.e., the ``southwestern'' action  $(n,n')+1=(n-1,n'-1)$, so Proposition \ref{prop-perv} immediatley gives 
\begin{cor}
\label{cor-perv2}
We have an isomorphism of posets
\[
\widehat{\mathrm{perv}}_\Z^{\mathrm{op}}\xrightarrow{\sim} \Oo(\Z\times \Z)
\]
mapping a perversity function $p$ to the complement of the image via the isomorphism $\psi\colon (n,n')\mapsto (-n-n',-n')$ of the set $S_p=\{(n,n')\in \Z\times \Z\text{ such that } n'\geq p(n)-n\}$, and mapping the ``infinite perversity functions'' $p_{-\infty}$ and $p_{+\infty}$ to $\emptyset$ and to $\Z\times \Z$, respectively. Moreover, this is an isomorphism of $\Z$-posets, where the $\Z$-action on the left is given by $(p+^{\mathrm{op}}1)(n)=p(n)-1$ and the $\Z$-action on the right is the one induced by the ``northeastern'' $\Z$-action on $\Z^2$, namely, $(n,n')+1=(n+1,n'+1)$. 
\end{cor}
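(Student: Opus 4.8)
The plan is to obtain the stated isomorphism essentially for free, by composing the isomorphism of Proposition \ref{prop-perv} with the order-reversing involution of $\Oo(\Z\times\Z)$ already described just above the statement. Concretely, write $\Psi\colon \widehat{\mathrm{perv}}_\Z\xrightarrow{\sim}\Oo(\Z\times\Z)$ for the isomorphism of Proposition \ref{prop-perv} (so $\Psi(p)=\varphi(S_p)$, $\Psi(p_{-\infty})=\Z\times\Z$, $\Psi(p_{+\infty})=\emptyset$), and $c\colon \Oo(\Z\times\Z)\to\Oo(\Z\times\Z)$, $c(U)=(\Z\times\Z)\setminus(-U)$, for the bijection considered there, which is inclusion-reversing and involutive. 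Then $c\circ\Psi$ is a bijection that reverses the order, hence an isomorphism of posets $\widehat{\mathrm{perv}}_\Z^{\mathrm{op}}\xrightarrow{\sim}\Oo(\Z\times\Z)$, and the only work left is to unravel what it does on elements and how it interacts with the $\Z$-actions.

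For the explicit formula, I would observe that $-\varphi(n,n')=(-(n+n'),-n')=\psi(n,n')$, so $-\varphi(S_p)=\psi(S_p)$ and therefore $(c\circ\Psi)(p)=(\Z\times\Z)\setminus\psi(S_p)$, which is exactly the asserted description; and on the infinite perversities, since $-(\Z\times\Z)=\Z\times\Z$ and $-\emptyset=\emptyset$, one gets $(c\circ\Psi)(p_{-\infty})=(\Z\times\Z)\setminus(\Z\times\Z)=\emptyset$ and $(c\circ\Psi)(p_{+\infty})=(\Z\times\Z)\setminus\emptyset=\Z\times\Z$, again as claimed.

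For $\Z$-equivariancy, the elementary input is that both complementation and the negation $U\mapsto-U$ commute with translations up to a sign, so $c(U+v)=c(U)-v$ for every $v\in\Z^2$; this is precisely the statement, made before the Corollary, that $c$ turns the northeastern action into the southwestern one. Combined with $\Psi(p+1)=\Psi(p)+(1,1)$ from Proposition \ref{prop-perv} (for the action $(p+1)(n)=p(n)+1$), this gives $(c\circ\Psi)(p+1)=(c\circ\Psi)(p)-(1,1)$. Since the $\Z$-action on $\widehat{\mathrm{perv}}_\Z^{\mathrm{op}}$ declared in the statement, $(p+^{\mathrm{op}}1)(n)=p(n)-1$, is the inverse of the action $p\mapsto p+1$ on $\widehat{\mathrm{perv}}_\Z$, this is equivalent to $(c\circ\Psi)(p+^{\mathrm{op}}1)=(c\circ\Psi)(p)+(1,1)$, i.e. $c\circ\Psi$ intertwines $+^{\mathrm{op}}1$ with the northeastern action on $\Oo(\Z\times\Z)$, as desired.

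There is no real obstacle here: the whole proof is the composition of two bijections that are already in place. The only thing requiring attention is the bookkeeping of the two independent sign reversals — the one built into $c$ via $U\mapsto-U$ and the one coming from passing to $\widehat{\mathrm{perv}}_\Z^{\mathrm{op}}$ — which must combine so that the southwestern action is sent back to the northeastern one rather than leaving a residual sign; as a sanity check I would evaluate $c\circ\Psi$ on the zero and identity perversity functions and compare the resulting upper sets with those recorded in the remark following Corollary \ref{corperv}.
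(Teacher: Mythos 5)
Your proposal is correct and is exactly the paper's route: the corollary is obtained by composing the isomorphism of Proposition \ref{prop-perv} with the order-reversing bijection $U\mapsto(\Z\times\Z)\setminus(-U)$, which turns the northeastern action into the southwestern one, and your sign bookkeeping (including the identification $-\varphi=\psi$ and the compensation via $+^{\mathrm{op}}1$) matches what the paper leaves implicit when it says the result follows ``immediately''. No gaps.
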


\subsection{Slicing the heart}\label{sec:heart}
Let $\mathscr{D}$ be a stable $\infty$-category { and let $\tee$ be a bounded t-structure on $\mathscr{D}$, i.e., equivalently, the datum of} a Bridgeland $\Z$-slicing of $\mathscr{D}$. Let $\heartsuit_{\mathfrak{t}}$ denote the heart of $\mathfrak{t}$. Then an abelian $\Z$-slicing of $\heartsuit_{\mathfrak{t}}$ is the datum of an extension $\tilde{\tee}$ of $\tee$ to a Bridgeland $\Z\times_{\mathrm{lex}}\hat{\Z}$-slicing on $\mathscr{D}$, where $\hat{\Z}$ denotes the $\Z$-poset consisting of $\Z$ endowed with the trivial $\Z$-action, and the morphism $\Oo(\Z)\to \Oo(\Z\times_{\mathrm{lex}}\hat{\Z})$ is induced by the projection on the first factor $\Z\times_{\mathrm{lex}}\hat{\Z}\to \Z$; see \cite[Section 5]{fosco}. We will denote by $\heartsuit_{\tee;\phi}$ the $\phi$-th slice of the heart of $\tee$. In other words,
\[
\heartsuit_{\tee;\phi}=\mathscr{D}_{\tilde{\tee};(0,\phi)}.
\]
Notice that we have
\[
\heartsuit_{\tee;\phi}[n]=\mathscr{D}_{\tilde{\tee};(n,\phi)},
\]
where $[n]$ denotes the ``shift by $n$'' functor on $\mathscr{D}$.

\begin{exmp}
Via the obvious inclusion of $\Z$-posets $\{0,1\}\hookrightarrow\hat{\Z}$, any torsion pair $(\heartsuit_{\tee;0},\heartsuit_{\tee;1})$ on $\heartsuit_{\mathfrak{t}}$ defines an abelian $\Z$-slicing on $\heartsuit_{\mathfrak{t}}$.
\end{exmp}
\begin{defn}\label{defgrad}
Let $\mathfrak{t}$ be a bounded t-structure on $\mathscr{D}$. An abelian $\mathbb{Z}$-slicing $\hat{\tee}$ on $\heartsuit_{\mathfrak{t}}$ is called:
\begin{itemize}
\item \emph{perverse} (or \emph{weak grading})
 if $\heartsuit_{\tee;\phi}\orth\heartsuit_{\tee;\psi}[n]$ for $\phi>\psi+n$; 
\item \emph{grading %filtration
} (or \emph{radical}) if $\heartsuit_{\tee;\phi}\orth\heartsuit_{\tee;\psi}[n]$ for $\phi>\psi+n$ and %$\heartsuit_{\tee;\phi }\orth \heartsuit_{\tee;\psi}[n]$
 for $\phi=\psi+n$ with $n\geq 2$; 
\item \emph{gluable} if $\heartsuit_{\tee;\phi}\orth \heartsuit_{\tee;\psi}[n]$ %and $\heartsuit_{\tee;\phi}\orth \heartsuit_{\tee;\psi}[n+1]$ 
for $\phi>\psi$ and $n>0$;
%\item \emph{mixed %filtration
%} if $\heartsuit_{\tee;\phi }\orth\heartsuit_{\tee;\psi}[n]$ for $ \phi > \psi -n$.  
\end{itemize} 
\end{defn}
\begin{rem}\label{special-gluable}
The definition of gluable abelian $\Z$-slicing of $\heartsuit_\tee$ is the specialization of Definition \ref{gluable} to $J_1=\Z$ and $J_2=\hat{\Z}$.
\end{rem}
\begin{rem}
Historically, grading filtrations first appeared in \cite{ekh} under the name of `radical filtrations'.%, while mixed filtrations are covered in the last section of \cite{kos}.
\end{rem}
\begin{exmp}
Let  $(\heartsuit_{\tee;0},\heartsuit_{\tee;1})$ be a torsion pair on $\heartsuit_{\mathfrak{t}}$. Then $(\heartsuit_{\tee;0},\heartsuit_{\tee;1})$, seen as an abelian $\Z$-slicing, is grading. Namely, as $\heartsuit_{\tee;\phi}[n]=0$ for $\phi \notin\{ 0,1\}$, the only nontrivial orthogonality conditions to be checked are:
\begin{itemize}
\item[-] $\heartsuit_{\tee;0}\orth\heartsuit_{\tee;0}[n]$ for $n<0$;
\item[-] $\heartsuit_{\tee;0}\orth\heartsuit_{\tee;1}[n]$ for $n<-1$;
\item[-] $\heartsuit_{\tee;1}\orth\heartsuit_{\tee;0}[n]$ for $n<1$;
\item[-] $\heartsuit_{\tee;1}\orth\heartsuit_{\tee;1}[n]$ for $n<0$.
\end{itemize}
These all follows from the orthogonality relation $\heartsuit_{\tee}\orth \heartsuit_\tee[n]$ for $n< 0$, except for $\heartsuit_{\tee;1}\orth\heartsuit_{\tee;0}$ which is true by definition of torsion pair.
\end{exmp}

%\begin{lem}\label{mix-to-glue}
%Let $\mathfrak{t}$ be a bounded t-structure on $\mathscr{D}$, and let $\hat{\tee}$ be an abelian $\mathbb{Z}$-slicing on $\heartsuit_{\mathfrak{t}}$. If $\tilde{\tee}$ is mixed, then $\tilde{\tee}$ is gluable.
%\end{lem}
%\begin{proof}
%Let $\phi$ and $\psi$ be in $\Z$ with $\phi>\psi$, and let $n>0$. Then $\phi>\psi-n$ and so, by definition of mixed slicing,  $\heartsuit_{\tee;\phi }\orth\heartsuit_{\tee;\psi}[n]$. Therefore, $\tilde{\tee}$ is gluable.
%\end{proof}
\begin{prop}\label{glue-to-grad}
Let $\mathfrak{t}$ be a bounded t-structure on $\mathscr{D}$, and let $\tilde{\tee}$ be an abelian $\mathbb{Z}$-slicing on $\heartsuit_{\mathfrak{t}}$. If $\tilde{\tee}$ is gluable, then $\tilde{\tee}$ is grading.
\end{prop}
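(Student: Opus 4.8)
The claim is that a gluable abelian $\Z$-slicing $\tilde{\tee}$ on $\heartsuit_\tee$ is grading. Recalling Definition~\ref{defgrad}, gluability gives $\heartsuit_{\tee;\phi}\orth\heartsuit_{\tee;\psi}[n]$ for all $\phi>\psi$ and all $n>0$, whereas the grading condition demands $\heartsuit_{\tee;\phi}\orth\heartsuit_{\tee;\psi}[n]$ for $\phi>\psi+n$ (any $n$) and additionally for $\phi=\psi+n$ with $n\geq 2$. So the plan is simply to check that each of the two families of orthogonality conditions defining ``grading'' is already subsumed by the gluability conditions together with the basic orthogonality built into any Bridgeland slicing of the heart.

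First I would dispose of the case $\phi>\psi+n$. If $n>0$, then $\phi>\psi+n>\psi$, so $\heartsuit_{\tee;\phi}\orth\heartsuit_{\tee;\psi}[n]$ is one of the gluability relations and we are done. If $n=0$, then $\phi>\psi$ and we need $\heartsuit_{\tee;\phi}\orth\heartsuit_{\tee;\psi}$; this is immediate from the Bridgeland $\Z\lex\hat\Z$-slicing axioms, since $(0,\phi)>(0,\psi)$ in the lexicographic order forces the slices to be semiorthogonal. If $n<0$, write $n=-m$ with $m>0$; we need $\heartsuit_{\tee;\phi}\orth\heartsuit_{\tee;\psi}[-m]$, i.e.\ $\mathscr{D}_{\tilde\tee;(0,\phi)}\orth \mathscr{D}_{\tilde\tee;(-m,\psi)}$, and since $(0,\phi)>(-m,\psi)$ lexicographically (the first coordinate already strictly decreases), this again follows from the slicing axiom. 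So the whole family $\phi>\psi+n$ is covered without even invoking gluability except in the $n>0$ subcase.

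Next I would handle the remaining grading condition: $\phi=\psi+n$ with $n\geq 2$. Here $n\geq 2>0$, and $\phi=\psi+n>\psi$ since $n>0$; hence $\heartsuit_{\tee;\phi}\orth\heartsuit_{\tee;\psi}[n]$ is, once more, precisely one of the gluability relations. This closes the argument. In short, the proof is a case split on the sign of $n$, using the lexicographic-order semiorthogonality of slices for $n\leq 0$ and the defining gluability relations for $n>0$, and noting that the ``diagonal'' grading condition $\phi=\psi+n$, $n\geq 2$ sits inside the $n>0$ regime.

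I do not expect any genuine obstacle: the statement is essentially a logical comparison of the inequality regions cut out by the two definitions, and the only input beyond pure bookkeeping is the elementary fact that in a Bridgeland $J$-slicing the slices indexed by a strictly decreasing pair of indices are semiorthogonal, applied to $J=\Z\lex\hat\Z$ with the identification $\heartsuit_{\tee;\phi}[n]=\mathscr{D}_{\tilde\tee;(n,\phi)}$ recorded just before Definition~\ref{defgrad}. The one point to state carefully is the translation of $\heartsuit_{\tee;\phi}\orth\heartsuit_{\tee;\psi}[n]$ into an orthogonality between slices of $\tilde\tee$ in the lexicographic poset, so that the comparison of index regions is literally a comparison of lexicographic inequalities; after that the three-way case split writes itself.
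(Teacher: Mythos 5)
Your proof is correct and follows essentially the same route as the paper's: a case split on the sign of $n$, with the $n<0$ and $n=0$ cases handled by the semiorthogonality built into the $\Z\lex\hat\Z$-slicing and the $n>0$ cases (including the diagonal $\phi=\psi+n$, $n\ge 2$) handled directly by the gluability relations. The only difference is that you spell out the lexicographic comparison for $n<0$, which the paper dismisses as trivial.
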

\begin{proof}
Let $\phi$ and $\psi$ be in $\Z$ with $\phi>\psi+n$.
The orthogonality condition $\heartsuit_{\tee;\phi}\orth\heartsuit_{\tee;\psi}[n]$ is trivially satisfied if $n<0$, so let us assume $n\geq 0$. If $n=0$, then $\phi>\psi$ and the orthogonaliy condition $\heartsuit_{\tee;\phi}\orth\heartsuit_{\tee;\psi}$ is satisfied by definition of slicing. Finally, if $n>0$, then we have $\phi>\psi$ and $n>0$, so $\heartsuit_{\tee;\phi}\orth\heartsuit_{\tee;\psi}[n]$ by definition of gluable slicing. %This shows that $\tilde{\tee}$ is perverse. 
If $\phi=\psi+n$ with $n\geq 2$, then in particular $\phi>\psi$ and $n>0$, so again $\heartsuit_{\tee;\phi}\orth\heartsuit_{\tee;\psi}[n]$. 
\end{proof}

\begin{prop}\label{grad}
Let $\mathfrak{t}$ be a bounded t-structure on $\mathscr{D}$, and let $\tilde{\tee}$ be a perverse abelian $\mathbb{Z}$-slicing on $\heartsuit_{\mathfrak{t}}$. Then, $\tilde{\tee}$ is $g_p$-compatible, where 
\begin{align*}
g_p\colon \mathbb{Z} \times_{\mathrm{lex}} \hat{\mathbb{Z}}&\to \mathbb{Z} \times_{\mathrm{lex}} \hat{\mathbb{Z}}\\
(n,\phi)&\mapsto(n+p(\phi),-p(\phi)),
\end{align*}
for every  strict perversity function $p\colon \Z\to \Z$. If $\tilde{\tee}$ is grading, then $\tilde{\tee}$ is $g_p$-compatible for every  perversity function $p$.
\end{prop}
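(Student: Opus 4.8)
The plan is to unwind Definition~\ref{compatible} for the map $g_p$ and the Bridgeland $\Z\lex\hat{\Z}$-slicing $\tilde{\tee}$, and then reduce everything to bookkeeping about the lexicographic order. First I would check that $g_p$ is a map of $\Z$-sets: since $\hat{\Z}$ carries the trivial action, the generator of $\Z$ acts on $\Z\lex\hat{\Z}$ by $(m,\phi)\mapsto(m+1,\phi)$, and $g_p(m+1,\phi)=(m+1+p(\phi),-p(\phi))=g_p(m,\phi)+1$, so $g_p$ is $\Z$-equivariant and Definition~\ref{compatible} applies verbatim.

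Next I would translate the orthogonality conditions. Since $\mathscr{D}_{\tilde{\tee};(m,\phi)}=\heartsuit_{\tee;\phi}[m]$, the two required orthogonalities $\mathscr{D}_{\tilde{\tee};(m,\phi)}\orth\mathscr{D}_{\tilde{\tee};(n,\psi)}$ and $\mathscr{D}_{\tilde{\tee};(m,\phi)}\orth\mathscr{D}_{\tilde{\tee};(n,\psi)}[1]$ become $\heartsuit_{\tee;\phi}\orth\heartsuit_{\tee;\psi}[k]$ and $\heartsuit_{\tee;\phi}\orth\heartsuit_{\tee;\psi}[k+1]$, where $k=n-m$. So the task is: assuming $(m,\phi)\leq(n,\psi)$ and $g_p(m,\phi)>g_p(n,\psi)$ in $\Z\lex\hat{\Z}$, to derive these two orthogonalities from Definition~\ref{defgrad}.

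The core of the proof is the case analysis on $(m,\phi)\leq(n,\psi)$, which means $m<n$, or ($m=n$ and $\phi\leq\psi$). The second case cannot coexist with $g_p(m,\phi)>g_p(n,\psi)$: with equal first coordinates the $g_p$-comparison forces $p(\phi)>p(\psi)$, contradicting monotonicity of $p$ together with $\phi\leq\psi$ (and the remaining alternative, comparing second coordinates under equal first coordinates, contradicts itself). Hence $m<n$, so $k\geq1$. Then $g_p(m,\phi)>g_p(n,\psi)$ forces $m+p(\phi)>n+p(\psi)$, i.e.\ $p(\phi)-p(\psi)>k\geq1$ (the alternative $m+p(\phi)=n+p(\psi)$ with $p(\phi)<p(\psi)$ is impossible, as it would give $p(\phi)-p(\psi)=k>0$). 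Monotonicity of $p$ yields $\phi>\psi$, and, since $p$ rises by at most $1$ per step, $p(\phi)-p(\psi)\leq\phi-\psi$; hence $\phi-\psi\geq k+1\geq2$.

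To finish I would split on the hypothesis on $\tilde{\tee}$. If $\tilde{\tee}$ is grading, then $\phi\geq\psi+k+1>\psi+k$ gives $\heartsuit_{\tee;\phi}\orth\heartsuit_{\tee;\psi}[k]$ by Definition~\ref{defgrad}, and for the shift by $k+1$ we invoke either $\phi>\psi+(k+1)$ or, in the boundary case $\phi=\psi+k+1$, the clause ``$\phi=\psi+n$ with $n\geq2$'', available precisely because $n=k+1\geq2$. If instead $\tilde{\tee}$ is only perverse and $p$ is a strict perversity, then (since $\phi-\psi\geq2$) the strictness inequality $p(\phi)-p(\psi)<\phi-\psi$ upgrades $p(\phi)-p(\psi)\geq k+1$ to $\phi-\psi\geq k+2$, i.e.\ $\phi>\psi+(k+1)$, so both orthogonalities follow from the single clause $\phi>\psi+n$ of perversity. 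The only real obstacle is carrying out the lexicographic comparisons and the inequalities between $p$-values and indices without slips; the conceptual point is that $(m,\phi)\leq(n,\psi)$ together with $g_p(m,\phi)>g_p(n,\psi)$ forces $m<n$, hence $k\geq1$, and this ``$k\geq1$'' is exactly what makes the ``$n\geq2$'' clause of a grading slicing usable for the shift by $k+1$, while for a bare perverse slicing that extra unit of room is instead supplied by strictness of $p$.
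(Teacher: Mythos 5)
Your proof is correct and follows essentially the same route as the paper's: the same lexicographic case analysis reducing to the single case $m<n$, $p(\phi)-p(\psi)>k$ (hence $\phi>\psi+k$ by monotonicity and the $1$-Lipschitz bound), with the shift-by-$(k+1)$ boundary case $\phi=\psi+k+1\geq\psi+2$ handled either by strictness of $p$ or by the ``$\phi=\psi+n$, $n\geq2$'' clause of a grading slicing. No gaps.
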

\begin{proof}
The map $g_p$ is $\Z$-equivariant, as the action on the second factor is the trivial one.  Let $(n,\phi)$ and $(m,\psi)$ in $\Z\times_{\mathrm{lex}} \hat{\Z}$ with $(n,\phi)\leq (m,\psi)$ such that $g_p(n,\phi)>g_p(m,\psi)$ in $\Z\times_{\mathrm{lex}} \hat{\Z}$. By definition of $g_p$ this means that we have
\[
(n+p(\phi),-p(\phi))>(m+p(\psi),-p(\psi))
\]
in $\Z\times_{\mathrm{lex}} \Z$, i.e., that $n+p(\phi)>m+p(\psi)$ or that $n+p(\phi)=m+p(\psi)$ and $p(\psi)>p(\phi)$. Similarly, the condition $(n,\phi)\leq (m,\psi)$ means that either $n<m$ or $n=m$ and $\phi\leq \psi$. By considering all possibilities, and taking into account that a perversity function is nondecreasing, one sees that there is actually a single case to deal with:
%\begin{itemize}
%\item 
$p(\phi)-p(\psi)>m-n$, with $m>n$ and $\phi>\psi$.
%\item $n=m$ and $p(\phi)>p(\psi)$ with $\phi\leq \psi$.
%\item $n+p(\phi)=m+p(\psi)$ with $n<m$ and $p(\psi)>p(\phi)$;
%\item $n+p(\phi)=m+p(\psi)$ and $p(\psi)>p(\phi)$ and $n=m$ and $\phi\leq \psi$.
%\end{itemize}
If $p$ is a perversity function, if $\phi> \psi$ then
\[
0\leq p(\phi)-p(\psi)\leq \phi-\psi,
\]
so we have
\[
\phi\geq  \psi+ p(\phi)-p(\psi)>\psi+m-n.
\]
Since $\tilde{\tee}$ is perverse, this implies $\heartsuit_{\tee;\phi}\orth\heartsuit_{\tee;\psi}[m-n]$, and so $\heartsuit_{\tee;\phi}[n]\orth \heartsuit_{\tee;\psi}[m]$, i.e.,
\[
\mathscr{D}_{\tilde{\tee};(n,\phi)}\orth\mathscr{D}_{\tilde{\tee};(m,\psi)}.
\]
Concerning the orthogonality of $\mathscr{D}_{\tilde{\tee};(n,\phi)}$ and $\mathscr{D}_{\tilde{\tee};(m,\psi)}[1]$, notice that
from $\phi\geq \psi+ p(\phi)-p(\psi)>\psi+m-n$ it follows that either $\phi>\psi+m-n+1$ or $\phi=\psi+m-n+1\geq \psi+2$.  If $\phi>\psi+m-n+1$, then reasoning as above we find $\mathscr{D}_{\tilde{\tee};(n,\phi)}\orth\mathscr{D}_{\tilde{\tee};(m,\psi)}[1]$. If $\phi=\psi+m-n+1\geq \psi+2$,
there are two cases to be considered. In the first case, $\tilde{\tee}$ is perverse and the perversity function $p$ is strict. In the second case, we allow $p$ to be any perversity, but we put a restriction on $\tilde{\tee}$, which we require to be grading.

In the first case, as $p$ is strict and $\phi\geq \psi+2$, we have $p(\phi)-p(\psi)< \phi-\psi$, and so again $\phi>\psi+m-n+1$.
In the second case, as $m-n+1\geq 2$ and $\tilde{\tee}$ is grading, we have $\heartsuit_{\tee;\phi}\orth\heartsuit_{\tee;\psi}[m-n+1]$, i.e., again
\[
\mathscr{D}_{\tilde{\tee};(n,\phi)}\orth\mathscr{D}_{\tilde{\tee};(m,\psi)}[1].
\]
\end{proof}

\begin{rem}
The proof of Proposition \ref{grad} makes it clear the meaning of the otherwise obscure condition in the definition of grading abelian $\Z$-slicing: the condition on a shift by at least 2 in the definition of strict perversity is traded for an  orthogonality condition in the slicing. 
\end{rem}

%\begin{prop}\label{grad2}
%Let $\mathfrak{t}$ be a bounded t-structure on $\mathscr{D}$, let $\tilde{\tee}$ be an abelian $\mathbb{Z}$-slicing on $\heartsuit_{\mathfrak{t}}$,  let $p\colon \Z\to \Z$ be a perversity function, and let $g_p\colon \mathbb{Z} \ltimes \hat{\mathbb{Z}}\to \mathbb{Z} \ltimes \hat{\mathbb{Z}}$ be the $\Z$-equivariant map given by $$g_p(n,\phi)=(n+p(\phi),-p(\phi)),$$
%where $\Z$ acts diagonally both on the source and on the target. With some abuse of nototation, denote by ${g_p}_!\tee$ the bounded $t$-structure on $\mathscr{D}$ induced by the $\Z\times \hat{\Z}$-slicing ${g_p}_!\tilde{\tee}$ via the projection on the first factor $\Z\times \hat{\Z}\to \Z$.
%If $\tilde{\tee}$ is mixed, then  then the abelian $\mathbb{Z}$-slicing ${g_p}_!\tilde{\tee}$ on $\heartsuit_{{g_p}_!\tee}$ is split. 
%\end{prop}

\begin{rem}\label{alpha}
By taking $p$ to be the identity perversity, $\mathrm{id}\colon \Z\to \Z$, we see that if $\tilde{\tee}$ is grading then $\tilde{\tee}$ is $\alpha$-compatible, where $\alpha\colon \mathbb{Z} \times_{\mathrm{lex}} \hat{\mathbb{Z}}\to \mathbb{Z} \times_{\mathrm{lex}} \hat{\mathbb{Z}}$ is the $\Z$-equivariant map given by
\[
\alpha(n,m)=(n+m,-m).
\]
\end{rem}

The proof of the following lemma is straightforward.
\begin{lem}\label{from-alpha-to-e}
Let $\beta\colon \mathbb{Z} \times_{\mathrm{lex}} \hat{\mathbb{Z}}\to \mathbb{Z} \times_{\mathrm{lex}} {\mathbb{Z}}$ be the map defined by
\[
\beta(n,m)=(n,n+m).
\]
Then $\beta$ is an isomorphism of $\Z$-tosets. Moreover the diagram
\[
\xymatrix{
\mathbb{Z} \times_{\mathrm{lex}} \hat{\mathbb{Z}}\ar[r]^{\alpha}\ar[d]_{\beta}& \mathbb{Z} \times_{\mathrm{lex}} \hat{\mathbb{Z}}\ar[d]^{\beta}\\
\mathbb{Z} \times_{\mathrm{lex}} {\mathbb{Z}}\ar[r]^{e}& \mathbb{Z} \times_{\mathrm{lex}} {\mathbb{Z}}
}
\]
commutes,
where $e\colon \Z\times \Z \to \Z\times \Z$ is the exchange map $e(n,m)=(m,n)$ from Lemma \ref{exchange} and $\alpha\colon \mathbb{Z} \times_{\mathrm{lex}} \hat{\mathbb{Z}}\to \mathbb{Z} \times_{\mathrm{lex}} \hat{\mathbb{Z}}$ is the map $\alpha(n,m)=(n+m,-m)$ from Remark \ref{alpha}.
\end{lem}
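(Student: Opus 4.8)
The plan is to verify each assertion by direct computation, since all three maps $\alpha$, $\beta$, $e$ are explicit affine transformations of $\Z^2$, and the only thing requiring care is keeping track of which $\Z$-action is in play on each copy.

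First I would show $\beta$ is an isomorphism of $\Z$-tosets. As a map of sets it has the evident two-sided inverse $(n,k)\mapsto(n,k-n)$, so it is a bijection. For monotonicity: if $(n,m)\le(n',m')$ in $\Z\lex\hat{\Z}$, then either $n<n'$, in which case $\beta(n,m)=(n,n+m)$ and $\beta(n',m')=(n',n'+m')$ already have distinct first coordinates $n<n'$, so $\beta(n,m)\le\beta(n',m')$; or $n=n'$ and $m\le m'$, in which case $n+m\le n'+m'$ and again $\beta(n,m)\le\beta(n',m')$. Running the same argument for the inverse map $(n,k)\mapsto(n,k-n)$ — noting that when the first coordinates agree the inequality $k-n\le k'-n'$ is equivalent to $k\le k'$ — shows that $\beta^{-1}$ is monotone too, so $\beta$ is an order isomorphism. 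For $\Z$-equivariance, recall that the action on $\Z\lex\hat{\Z}$ is $(n,m)+1=(n+1,m)$ since $\hat{\Z}$ carries the trivial action, while the action on $\Z\lex\Z$ is the diagonal one $(n,k)+1=(n+1,k+1)$; then $\beta((n,m)+1)=\beta(n+1,m)=(n+1,n+1+m)=(n,n+m)+1=\beta(n,m)+1$, as required.

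Finally, for commutativity of the square I would compute both composites on a general element $(n,m)$: on one hand $\beta(\alpha(n,m))=\beta(n+m,-m)=\bigl(n+m,(n+m)+(-m)\bigr)=(n+m,n)$, and on the other hand $e(\beta(n,m))=e(n,n+m)=(n+m,n)$. Hence $\beta\circ\alpha=e\circ\beta$.

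The only real obstacle here is bookkeeping: there is no conceptual content, but one must be vigilant about the two different $\Z$-actions (trivial on the $\hat{\Z}$-factor of $\Z\lex\hat{\Z}$ versus diagonal on $\Z\lex\Z$), since this is precisely where an index or sign slip would be easy to make and hard to spot.
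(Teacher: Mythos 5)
Your proof is correct and is exactly the direct verification the paper has in mind (the paper declares the lemma "straightforward" and omits the argument entirely). All the computations — the inverse $(n,k)\mapsto(n,k-n)$, monotonicity in both directions, equivariance with respect to the two different $\Z$-actions, and the check $\beta(\alpha(n,m))=(n+m,n)=e(\beta(n,m))$ — are accurate.
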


\begin{cor}
Let $\mathfrak{t}$ be a bounded t-structure on $\mathscr{D}$, and let let $\tilde{\tee}$ be an abelian $\mathbb{Z}$-slicing on $\heartsuit_{\mathfrak{t}}$. Then, in the notation of Lemma \ref{from-alpha-to-e}, $\tilde{\tee}$ is $\alpha$-compatible if and only if $\beta_*\tee$ is gluable.
\end{cor}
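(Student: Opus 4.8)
The plan is to deduce the statement as a short chain of equivalences built from Remark~\ref{avanti-e-indietro} and the commutative square in Lemma~\ref{from-alpha-to-e}, so that no orthogonality relation needs to be checked by hand. Write $J=\Z\lex\hat\Z$ and $J'=\Z\lex\Z$; recall from Lemma~\ref{from-alpha-to-e} that $\beta\colon J\to J'$ is an isomorphism of $\Z$-tosets, that $\alpha\colon J\to J$ and $e\colon J'\to J'$ are $\Z$-equivariant, and that $\beta\circ\alpha=e\circ\beta$. Since $\tilde{\tee}$ is an abelian $\Z$-slicing on $\heartsuit_{\tee}$ it is in particular a Bridgeland $J$-slicing of $\mathscr{D}$, so Remark~\ref{avanti-e-indietro} applies to it.

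First I would invoke the first half of Remark~\ref{avanti-e-indietro} with $f=\alpha$ and the isomorphism $g=\beta$: this gives that $\tilde{\tee}$ is $\alpha$-compatible if and only if it is $(\beta\circ\alpha)$-compatible, which by $\beta\circ\alpha=e\circ\beta$ is the same as being $(e\circ\beta)$-compatible. Next I would use the second half of Remark~\ref{avanti-e-indietro}, taking there $f=e\circ\beta\colon J\to J'$ and the isomorphism $h=\beta^{-1}\colon J'\to J$, so that $h^{-1}=\beta$ and $f\circ h=e$: this yields that $\tilde{\tee}$ is $(e\circ\beta)$-compatible if and only if $\beta_*\tilde{\tee}$ is $e$-compatible. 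Finally, $\beta_*\tilde{\tee}$ is again a Bridgeland slicing, now of the $\Z$-toset $\Z\lex\Z$, since pushforward along a morphism of $\Z$-tosets preserves the Bridgeland property; hence by Definition~\ref{gluable} it is $e$-compatible exactly when it is gluable. Concatenating the four equivalences proves the corollary.

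I do not expect any genuine obstacle: the argument is purely formal once Lemma~\ref{from-alpha-to-e} is available. The only points requiring a little care are to make sure that $\beta$ --- and therefore $\beta^{-1}$ --- really is an isomorphism of $\Z$-\emph{tosets}, so that $\beta_*$ is defined and the hypotheses of Remark~\ref{avanti-e-indietro} are met, and to keep track in each application of that remark of which slicing is meant and over which $\Z$-toset it lives, namely $\tilde{\tee}$ as a $J$-slicing in the two applications above and $\beta_*\tilde{\tee}$ as the associated $J'$-slicing. None of this involves any computation beyond what is already recorded in Lemma~\ref{from-alpha-to-e}.
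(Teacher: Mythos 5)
Your proposal is correct and follows essentially the same route as the paper: both arguments run the identical chain of equivalences via the two halves of Remark~\ref{avanti-e-indietro} together with the commuting square $\beta\circ\alpha=e\circ\beta$ of Lemma~\ref{from-alpha-to-e}, concluding by identifying $e$-compatibility of $\beta_*\tilde{\tee}$ with gluability. The only (immaterial) difference is that you invoke Definition~\ref{gluable} directly at the last step where the paper cites Remark~\ref{special-gluable}.
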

\begin{proof}
By Lemma \ref{from-alpha-to-e} and Remark \ref{avanti-e-indietro}, $\tilde{\tee}$ is $\alpha$-compatible if and only if $\tilde{\tee}$ is $(\beta\circ\alpha)$-compatible, which happens if and only if $\tilde{\tee}$ is $(e\circ\beta)$-compatible. As $\beta$ is an isomorphism of $\Z$-tosets, we can write $\tee=(\beta^{-1})_*\beta_*\tee$, and so, by Remark \ref{avanti-e-indietro} again, $\tilde{\tee}$ is $(e\circ\beta)$-compatible if and only if $\beta_*\tee$ is $e$-compatible. By Remark \ref{special-gluable}, this is equivalent to saying that $\beta_*\tee$ is gluable. 
\end{proof}
From Proposition \ref{glue-to-grad} and Remark \ref{alpha} we immediately get the following
\begin{cor}
Let $\mathfrak{t}$ be a bounded t-structure on $\mathscr{D}$, and let let $\tilde{\tee}$ be an abelian $\mathbb{Z}$-slicing on $\heartsuit_{\mathfrak{t}}$. If $\tilde{\tee}$ is grading, then $\beta_*\tee$ is gluable. In particular, if $\tilde{\tee}$ is grading, then also $\beta_*\tee$ is grading.
\end{cor}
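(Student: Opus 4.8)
The plan is to read this off as a formal consequence of three facts already established, chained together: a grading abelian $\Z$-slicing is $\alpha$-compatible (Remark \ref{alpha}, which is just Proposition \ref{grad} specialized to the identity perversity, for which $g_{\mathrm{id}}=\alpha$); a slicing $\tilde{\tee}$ is $\alpha$-compatible if and only if $\beta_*\tilde{\tee}$ is gluable (the Corollary immediately preceding this one); and a gluable slicing is grading (Proposition \ref{glue-to-grad}). Here $\alpha$ and $\beta$ are the maps of Lemma \ref{from-alpha-to-e}.

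First I would prove the first assertion. Assuming $\tilde{\tee}$ is grading, Remark \ref{alpha} gives that $\tilde{\tee}$ is $\alpha$-compatible, and then the preceding Corollary — whose proof runs through the equivalences $\alpha$-compatibility $\Leftrightarrow$ $(\beta\circ\alpha)$-compatibility $\Leftrightarrow$ $(e\circ\beta)$-compatibility $\Leftrightarrow$ $e$-compatibility of $\beta_*\tilde{\tee}$, using the commuting square of Lemma \ref{from-alpha-to-e} and Remark \ref{avanti-e-indietro} — yields that $\beta_*\tilde{\tee}$ is gluable. That settles the first claim.

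For the ``in particular'' clause I would apply Proposition \ref{glue-to-grad} to $\beta_*\tilde{\tee}$. The only point to check is a type issue: $\beta_*\tilde{\tee}$ is literally a $\Z\times_{\mathrm{lex}}\Z$-slicing, not a $\Z\times_{\mathrm{lex}}\hat{\Z}$-slicing, but since $\beta$ is an isomorphism of $\Z$-tosets the notions ``gluable'' and ``grading'' transport along it, and the proof of Proposition \ref{glue-to-grad} is entirely formal — it merely shuffles orthogonality relations among the slices indexed by the second coordinate and their shifts — so it carries over verbatim, once its ``$\heartsuit[n]$'' is read as the slice $\mathscr{D}_{\beta_*\tilde{\tee};(n,\phi)}$. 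I do not expect any genuine obstacle here: this really is the kind of immediate corollary the authors advertise, and the one thing deserving a moment of attention is exactly this bookkeeping — keeping track of which $\Z$-toset each slicing lives on, and that the grading property asserted for $\beta_*\tilde{\tee}$ is the one transported along $\beta$.
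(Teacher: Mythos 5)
Your proposal is correct and follows the paper's own route exactly: the paper derives this corollary ``immediately'' from Remark \ref{alpha} (grading $\Rightarrow$ $\alpha$-compatible), the preceding corollary ($\alpha$-compatibility of $\tilde{\tee}$ $\Leftrightarrow$ gluability of $\beta_*\tilde{\tee}$), and Proposition \ref{glue-to-grad} (gluable $\Rightarrow$ grading). Your extra remark on transporting the notions ``gluable'' and ``grading'' along the isomorphism $\beta$ is a point the paper leaves implicit, and is handled correctly.
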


\begin{prop}\label{grad2}
Let $\mathfrak{t}$ be a bounded t-structure on $\mathscr{D}$, and let let $\tilde{\tee}$ be a grading abelian $\mathbb{Z}$-slicing on $\heartsuit_{\mathfrak{t}}$. For every perversity $p$, 
let $\gamma_p\colon \Z\times_{\mathrm{lex}}\hat{\Z}\to \Z$ the $\Z$-equivariant morphism given by 
\[
\gamma_p\colon (n,\phi)\mapsto n+p(\phi).
\]
Then
\begin{itemize}
\item $\tilde{\tee}$ is $\gamma_p$-compatible;
\item $ (\gamma_p)_!\tilde{\tee}$ is a bounded $t$-structure on $\mathscr{D}$;
\item the map
\begin{align*}
\Psi\colon \mathrm{perv}_\Z^{\mathrm{op}}&\to \ts(\mathscr{D})\\
p&\mapsto (\gamma_p)_!\tilde{\tee},
\end{align*}
is a morphism of $\Z$-posets, where on the left we have the $\Z$-action $(p+1)(n)=p(n)+1$ and on the right the $\Z$-action given by the shift in $\mathscr{D}$;
\item 
$\Psi$ uniquely extends to a morphism of $\Z$-posets
\[
\Psi\colon \widehat{\mathrm{perv}}_\Z^{\mathrm{op}}\to \ts(\mathscr{D})
\]
preserving maxima and minima.
\end{itemize}
\end{prop}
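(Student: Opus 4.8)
The plan is to obtain $\Psi$ by composing the construction of Proposition~\ref{grad} with an honest morphism of $\Z$-tosets, so that the first three assertions become formal consequences of Propositions~\ref{grad} and~\ref{functoriality}. Write $\pi\colon \Z\times_{\mathrm{lex}}\hat{\Z}\to \Z$ for the projection onto the first factor; it is monotone and $\Z$-equivariant, hence a morphism of $\Z$-tosets, and one has $\gamma_p=\pi\circ g_p$ with $g_p$ the map $(n,\phi)\mapsto(n+p(\phi),-p(\phi))$ of Proposition~\ref{grad}. Since $\tilde{\tee}$ is grading and $p$ is a perversity function, Proposition~\ref{grad} gives that $\tilde{\tee}$ is $g_p$-compatible; since $\pi$ is monotone, $(g_p)_!\tilde{\tee}$ is $\pi$-compatible by Remark~\ref{everything-compatible}; hence, by Proposition~\ref{functoriality}, $\tilde{\tee}$ is $(\pi\circ g_p)=\gamma_p$-compatible and $(\gamma_p)_!\tilde{\tee}=\pi_!(g_p)_!\tilde{\tee}$. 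By the Proposition immediately following Lemma~\ref{slices}, $(\gamma_p)_!\tilde{\tee}$ is a Bridgeland $\Z$-slicing, with $k$-th slice
\[
\mathscr{D}_{(\gamma_p)_!\tilde{\tee};k}=\langle \heartsuit_{\tee;\phi}[n]\rangle_{\,n+p(\phi)=k};
\]
and a Bridgeland $\Z$-slicing is exactly a bounded $t$-structure, which settles the first two bullets.

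For the third bullet I would first compute the aisle of $(\gamma_p)_!\tilde{\tee}$: evaluating at the slicing $((-\infty,0),[0,+\infty))$ of $\Z$ and using the description of the subcategories of an $f_!$-slicing from Lemma~\ref{is-t-structure}, it equals $\langle \heartsuit_{\tee;\phi}[n]\rangle_{\,n\geq -p(\phi)}$. Thus $p\mapsto{}$(aisle of $(\gamma_p)_!\tilde{\tee}$) reverses inclusions as $p$ ranges over $\mathrm{perv}_\Z$, so $p\mapsto(\gamma_p)_!\tilde{\tee}$ is order-reversing on $\mathrm{perv}_\Z$, i.e.\ order-preserving on $\mathrm{perv}_\Z^{\mathrm{op}}$. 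For $\Z$-equivariance, note that $s\colon \Z\to \Z$, $s(k)=k+1$, is an automorphism of the $\Z$-toset $\Z$ and that $\gamma_{p+1}=s\circ \gamma_p$; Proposition~\ref{functoriality} then gives $(\gamma_{p+1})_!\tilde{\tee}=s_*\big((\gamma_p)_!\tilde{\tee}\big)$, and comparing hearts, $\heartsuit_{(\gamma_{p+1})_!\tilde{\tee}}=\heartsuit_{(\gamma_p)_!\tilde{\tee}}[-1]$, identifies this with the shift of $(\gamma_p)_!\tilde{\tee}$ in $\ts(\mathscr{D})$.

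For the last bullet, $\widehat{\mathrm{perv}}_\Z^{\mathrm{op}}$ has a top element $p_{-\infty}$ and a bottom element $p_{+\infty}$, both $\Z$-fixed, while $\ts(\mathscr{D})$ has top and bottom the trivial $t$-structures $(0,\mathscr{D})$ and $(\mathscr{D},0)$, which are shift-invariant; setting $\Psi(p_{-\infty})=(0,\mathscr{D})$ and $\Psi(p_{+\infty})=(\mathscr{D},0)$ is then the unique extension preserving maxima and minima, and monotonicity and $\Z$-equivariance of the extension are immediate. One could instead transport the already-constructed map along the isomorphism $\widehat{\mathrm{perv}}_\Z^{\mathrm{op}}\cong\Oo(\Z\times\Z)$ of Corollary~\ref{cor-perv2}.

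The mathematical content is slight: everything reduces to Propositions~\ref{grad} and~\ref{functoriality} together with the slice formula for $f_!$-slicings. The only point requiring genuine care is the bookkeeping of the two $\Z$-actions — matching $p\mapsto p+1$ on $\mathrm{perv}_\Z^{\mathrm{op}}$ with the shift on $\ts(\mathscr{D})$, keeping straight which member of a slicing pair is the aisle and the sign of the shift — so that ``morphism of $\Z$-posets'' holds on the nose; and remembering that a ``Bridgeland $\Z$-slicing'' is already a \emph{bounded} $t$-structure, which is what makes the passage to $\ts(\mathscr{D})$ automatic rather than something to be checked.
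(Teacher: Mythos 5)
Your argument is correct and follows the paper's proof essentially verbatim: the same factorization $\gamma_p=\pi_1\circ g_p$, the same appeal to Proposition~\ref{grad}, Remark~\ref{everything-compatible} and Proposition~\ref{functoriality} for the first two bullets, and the same computation of the upper category $\langle \mathscr{D}_{\tilde{\tee};(n,\phi)}\rangle_{n+p(\phi)\geq 0}$ for monotonicity and equivariance. One small slip: by your own formula this upper category \emph{grows} (does not shrink) as $p$ increases, so $p\mapsto{}$(upper category) \emph{preserves} inclusions; it is the reverse-inclusion convention on $\ts(\mathscr{D})$ that then makes $\Psi$ order-reversing on $\mathrm{perv}_\Z$ and hence monotone on $\mathrm{perv}_\Z^{\mathrm{op}}$ --- the conclusion you state is the right one.
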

\begin{proof}
Let $g_p\colon \mathbb{Z} \times_{\mathrm{lex}} \hat{\mathbb{Z}}\to \mathbb{Z} \times_{\mathrm{lex}} \hat{\mathbb{Z}}$ the map defined in Proposition \ref{grad}
As $\tilde{\tee}$ is grading, by Proposition \ref{grad}, $\tilde{\tee}$ is $g_p$-compatible. The projection on the first factor, $\pi_1\colon  \mathbb{Z} \times_{\mathrm{lex}} \hat{\mathbb{Z}}\to \Z$ is a morphism of $\Z$-tosets, so by Remark \ref{everything-compatible} every $\mathbb{Z} \times_{\mathrm{lex}} \hat{\mathbb{Z}}$-slicing is $\pi_1$-compatible. In particular, $(g_p)_!\tee$ is $\pi_1$-compatible. Therefore, by Proposition \ref{functoriality}, $\tee$ is $(\pi_1\circ g_p)$-compatible. As $\pi_1\circ g_p=\gamma_p$, this precisely says that $\tee$ is $\gamma_p$-compatible. We therefore have a Bridgeland $\Z$-slicing, i.e., a bounded $t$-structure, $(\gamma_p)_!\tee$ on $\mathscr{D}$. The map $\Psi$ is monotone and $\Z$-equivariant. Indeed, the $t$-structure $\Psi_p$ is defined by the upper category
\[
\mathscr{D}_{(\gamma_{p})_!\tilde{\tee};\geq 0}=\langle \mathscr{D}_{\tee;(n,\phi)}\rangle_{(n,\phi)\in \gamma_p^{-1}([0,+\infty))}.
\]
If $p_1\geq^{\mathrm{op}} p_2$, then  $p_1\leq p_2$ and so $n+p_1(\phi)\geq 0$ implies $n+p_2(\phi)\geq 0$, and so $\gamma_{p_1}^{-1}([0,+\infty))\subseteq \gamma_{p_2}^{-1}([0,+\infty))$. This gives $\mathscr{D}_{(\gamma_{p_1})_!\tilde{\tee};\geq 0}\subseteq \mathscr{D}_{(\gamma_{p_2})_!\tilde{\tee};\geq 0}$, i.e.,
\[
\mathscr{D}_{(\gamma_{p_1})_!\tilde{\tee}}\geq \mathscr{D}_{(\gamma_{p_2})_!\tilde{\tee}}
\]
in the parial order on  $\ts(\mathscr{D})$. Similarly, $n+(p+^{\mathrm{op}}1)(\phi)\geq 0$ if and only if $n+p(\phi)\geq 1$ and so
\[
\mathscr{D}_{(\gamma_{p+1})_!\tilde{\tee};\geq 0}=\mathscr{D}_{(\gamma_{p})_!\tilde{\tee};\geq 1}=\mathscr{D}_{(\gamma_{p})_!\tilde{\tee};\geq 0}[1].
\]
Finally, $\Psi$ trivially (and uniquely) extends to $\widehat{\mathrm{perv}}_\Z^{\mathrm{op}}$ preserving maxima and minima.
\end{proof}
Recalling Corollary \ref{cor-perv2} and Proposition \ref{glue-to-grad} we finally get the result we were aiming to.
\begin{thm}\label{main-thm}
Let $\mathfrak{t}$ be a bounded t-structure on $\mathscr{D}$, and let let $\tilde{\tee}$ be a gluing abelian $\mathbb{Z}$-slicing on $\heartsuit_{\mathfrak{t}}$. Then $\tilde{\tee}$ explicitly induces a natural morphism of $\Z$-posets
\[
\Psi\colon\Oo(\Z\times\Z)\to \ts(\mathscr{D})
\]
such that for every proper upper set in $\Z\times \Z$, the corresponding $t$-structure on $\mathscr{D}$ is bounded. 
\end{thm}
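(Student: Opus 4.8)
The plan is to assemble the morphism $\Psi$ by composing the identifications of posets built in the previous subsections with the construction of Proposition \ref{grad2}. First, since $\tilde{\tee}$ is gluing, Proposition \ref{glue-to-grad} tells us that $\tilde{\tee}$ is grading, so all of the machinery attached to grading abelian $\Z$-slicings applies. In particular, Proposition \ref{grad2} gives a morphism of $\Z$-posets $\Psi\colon \widehat{\mathrm{perv}}_\Z^{\mathrm{op}}\to \ts(\mathscr{D})$ sending $p\mapsto (\gamma_p)_!\tilde{\tee}$, preserving maxima and minima, where the $\Z$-action on the source is $(p+1)(n)=p(n)+1$ and on the target is the shift. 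Then I would invoke Corollary \ref{cor-perv2}, which provides an isomorphism of $\Z$-posets $\widehat{\mathrm{perv}}_\Z^{\mathrm{op}}\xrightarrow{\sim}\Oo(\Z\times\Z)$ (intertwining the action $(p+^{\mathrm{op}}1)(n)=p(n)-1$ on the left with the northeastern action on the right). The desired $\Psi\colon \Oo(\Z\times\Z)\to\ts(\mathscr{D})$ is obtained by precomposing the map of Proposition \ref{grad2} with the inverse of this isomorphism.

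The one subtlety to check is that the $\Z$-actions match up correctly, since Proposition \ref{grad2} states $\Z$-equivariance with respect to $(p+1)(n)=p(n)+1$ on $\widehat{\mathrm{perv}}_\Z^{\mathrm{op}}$, whereas Corollary \ref{cor-perv2} is stated with respect to $(p+^{\mathrm{op}}1)(n)=p(n)-1$. These are opposite actions, and the point is that $\widehat{\mathrm{perv}}_\Z^{\mathrm{op}}$ is the opposite poset, so passing from $\mathrm{perv}$-with-action-$p\mapsto p+1$ to its opposite negates the generator; concretely, the action making $\Psi$ of Proposition \ref{grad2} equivariant, viewed on $\widehat{\mathrm{perv}}_\Z^{\mathrm{op}}$, is exactly $p\mapsto p+^{\mathrm{op}}1$, so it agrees with the one in Corollary \ref{cor-perv2}. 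Hence the composite $\Psi\colon \Oo(\Z\times\Z)\to\ts(\mathscr{D})$ is a morphism of $\Z$-posets for the northeastern action on $\Oo(\Z\times\Z)$ and the shift action on $\ts(\mathscr{D})$.

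Finally, for the boundedness claim: under the isomorphism of Corollary \ref{cor-perv2}, a \emph{proper} upper set of $\Z\times\Z$ (i.e.\ one different from $\emptyset$ and $\Z\times\Z$) corresponds to an honest perversity function $p\in\mathrm{perv}_\Z$ rather than to one of the two infinite perversities $p_{\pm\infty}$. For such a $p$, the $t$-structure $\Psi(p)=(\gamma_p)_!\tilde{\tee}$ is the one produced by Proposition \ref{grad2}, which is asserted there to be a \emph{bounded} $t$-structure on $\mathscr{D}$ (being $(\gamma_p)_!$ applied to the Bridgeland $\Z\times_{\mathrm{lex}}\hat\Z$-slicing $\tilde{\tee}$, hence itself a Bridgeland $\Z$-slicing by the Proposition preceding Proposition \ref{functoriality}, i.e.\ a bounded $t$-structure). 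This finishes the proof. The only genuinely non-routine point is the bookkeeping of the various $\Z$-actions through the chain of isomorphisms; everything else is a direct appeal to the cited results.
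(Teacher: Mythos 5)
Your proposal matches the paper's intended argument exactly: the paper derives Theorem \ref{main-thm} precisely by combining Proposition \ref{glue-to-grad} (gluable implies grading), Proposition \ref{grad2} (the morphism $\widehat{\mathrm{perv}}_\Z^{\mathrm{op}}\to\ts(\mathscr{D})$, with bounded $t$-structures for honest perversity functions), and Corollary \ref{cor-perv2} (the $\Z$-poset identification $\widehat{\mathrm{perv}}_\Z^{\mathrm{op}}\cong\Oo(\Z\times\Z)$ under which proper upper sets correspond to finite perversities). Your explicit bookkeeping of the two opposite $\Z$-actions is a welcome addition, since the paper leaves that step implicit.
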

\begin{rem}By Proposition \ref{grad2}, one sees that
Theorem \ref{main-thm} is actually true under the weaker assumption that $\tilde{\tee}$ is grading. Moreover, by restricting to the $\Z$-sub-poset of $\Oo(\Z\times\Z)$ consisting of the image in $\Oo(\Z\times\Z)$ of strict perversities, Theorem \ref{main-thm} holds under the even weaker assumption that $\tilde{\tee}$ is perverse. We preferred to state it under the stronger assumption of a gluable $\tilde{\tee}$ to make its use in the examples below more immediate. See however Subsection \ref{perverse-coherent} for an geometrically interesting example involving a perverse abelian slicing.
\end{rem}

\begin{rem}\label{main-rem}
The morphism $\Psi$ can be thought of as a $(\Z\times \Z)$-slicing of $\mathscr{D}$, but one has to keep in mind that the poset $\Z\times\Z$ indexing the slices (and so the cohomologies) is now not totally ordered. This is a possibly subtle point, so let us spend a few more words on it. An abelian $\mathbb{Z}$-slicing $\tilde{\tee}$ of $\heartsuit_\tee$ is by definition a $(\Z\times_{\mathrm{lex}}\hat{\Z})$-slicing, and by Lemma \ref{from-alpha-to-e}, this is equivalently a $(\Z\times_{\mathrm{lex}}{\Z})$-slicing. So going from an abelian slicing of the heart to a  $(\Z\times_{\mathrm{lex}}{\Z})$-slicing of $\mathscr{D}$ is a trivial step. What is nontrivial is going from an abelian slicing of the heart to a $(\Z\times{\Z})$-slicing of $\mathscr{D}$, where now the poset structure on $\Z\times \Z$ is given by the product order and not by the lexicographic order. And indeed this can generally not be done for an 
arbitrary abelian slicing of the heart, and here is where the property of the abelian slicing to be grading comes in. Finally, to emphasize once more how going from a   $(\Z\times_{\mathrm{lex}}{\Z})$-slicing to a $(\Z\times{\Z})$-slicing is a nontrivial step, consider how there are many more upper sets  in $\Z\times{\Z}$ than in $\Z\times_{\mathrm{lex}}{\Z}$.
\end{rem}
\begin{rem}
Describing the bounded $t$-structure on $\mathscr{D}$ associated by Theorem \ref{main-thm} to a proper upper set $U$ of $\Z\times \Z$ is a bit involved, but it is a completely explicit procedure. To begin with, recall that a bounded $t$-structure is completely determined by its heart, so we only need to give a description of the heart $\heartsuit_U$ associated with $U$. To do this, notice that the perveristy function associated to $U$ is
\[
p_U(n)=n+\min\{n'\in \Z\text{ such that } (n,n')\notin \psi^{-1}(U)\} 
\]
where $\psi^{-1}(n,n')=(-n+n',-n')$. The heart $\heartsuit_U$ is then the extension closed subcategory of $\mathscr{D}$ generated by the slices $\mathscr{D}_{(-p_U(n),n)}$ of $\tilde{\tee}$.
\end{rem}
\begin{exmp}
If $U=\{(n,n')\text{ such that } n'\geq 1\}$, then $\psi(U)=\{(n,n')\text{ such}$ $\text{that } n'\leq -1\}$ and so $p_U(n)=n$. Therefore, $\heartsuit_U=\langle \mathscr{D}_{(-n,n)}\rangle_{n\in \Z}$ in this case. If $U=\{(n,n')\text{ such that } n\geq 1\}$, then $\psi(U)=\{(n,n')\text{ such that } n'\leq -n-1\}$ and so $p_U(n)=0$. Therefore, $\heartsuit_U=\langle \mathscr{D}_{(0,n)}\rangle_{n\in \Z}$ in this case.
\end{exmp}
A more explict description of the \emph{perverse hearts} of $\mathscr{D}$ is as follows.
\begin{thm}\label{perverse-heart}
Let $\mathfrak{t}$ be a bounded t-structure on $\mathscr{D}$, and let let $\tilde{\tee}$ be a gluing abelian $\mathbb{Z}$-slicing on $\heartsuit_{\mathfrak{t}}$. Let $U$ be an upper set of $\Z\times \Z$ and let $p$ be the corresponding perversity. Then the preverse heart $\heartsuit_p=\heartsuit_U$ of $\mathscr{D}$ is the full subcategory of $\mathscr{D}$ on those objects $X$ such that 
\[
H_\tee^{n}(X)[-n]\in \langle \heartsuit_{\tee;n'}\rangle_{n'\in p^{-1}(-n)}
\]
for every $n\in \Z$, where $H_\tee^{n}(X)$ is the $n$-th cohomology object of $X$ in the $t$-structure $\tee$ and $\{\heartsuit_{\tee;n'}\}_{n'\in \Z}$ are the slices of the heart $\heartsuit_\tee$ of $\tee$ for the abelian $\Z$-slicing $\tilde{\tee}$.
 \end{thm}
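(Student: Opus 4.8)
The plan is to recognise $\heartsuit_p$ as the heart of a bounded $t$-structure already constructed above, and then to describe its objects through the cohomology functors $H^n_\tee$ of $\mathfrak{t}$. By Proposition~\ref{grad2} (using Corollary~\ref{cor-perv2} to match the perversity $p$ with the upper set $U$), $\heartsuit_p=\heartsuit_U$ is the heart of the bounded $t$-structure $(\gamma_p)_!\tilde{\tee}$, where $\gamma_p\colon\Z\times_{\mathrm{lex}}\hat{\Z}\to\Z$ sends $(n,\phi)$ to $n+p(\phi)$; here $\tilde{\tee}$, being gluing, is in particular grading (Proposition~\ref{glue-to-grad}), which is what makes $(\gamma_p)_!\tilde{\tee}$ available. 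Since the heart of a bounded $t$-structure is its $0$-slice, the slice formula of the proposition following Lemma~\ref{slices}, combined with $\mathscr{D}_{\tilde{\tee};(n,\phi)}=\heartsuit_{\tee;\phi}[n]$, gives $\heartsuit_p=\langle\heartsuit_{\tee;\phi}[-p(\phi)]\rangle_{\phi\in\Z}$. Write $\mathscr{A}_n:=\langle\heartsuit_{\tee;n'}\rangle_{n'\in p^{-1}(-n)}$. With this, the inclusion ``$\supseteq$'' is immediate: if $H^n_\tee(X)[-n]\in\mathscr{A}_n$ for all $n$, then $H^n_\tee(X)\in\mathscr{A}_n[n]=\langle\heartsuit_{\tee;\phi}[-p(\phi)]\rangle_{p(\phi)=-n}\subseteq\heartsuit_p$, and since $\mathfrak{t}$ is bounded $X$ is a finite iterated extension of its objects $H^n_\tee(X)$, so $X\in\heartsuit_p$ because the heart $\heartsuit_p$ is extension closed.

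The remaining inclusion ``$\subseteq$'' I would derive from the following claim: for every $X$ and every $k\neq 0$,
\[
\mathcal{H}^k_{(\gamma_p)_!\tilde{\tee}}(X)=0\quad\Longleftrightarrow\quad \mathcal{H}^{(n,\phi)}_{\tilde{\tee}}(X)=0 \text{ for all }(n,\phi)\text{ with }n+p(\phi)=k.
\]
Indeed, if $X\in\heartsuit_p$ then all $\mathcal{H}^k_{(\gamma_p)_!\tilde{\tee}}(X)$ with $k\neq0$ vanish, so by the claim the $\tilde{\tee}$-cohomology of $X$ is concentrated on $\{(n,\phi):n+p(\phi)=0\}$. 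As $\mathfrak{t}=(\pi_1)_*\tilde{\tee}$ for the projection $\pi_1\colon\Z\times_{\mathrm{lex}}\hat{\Z}\to\Z$, for each $n$ the object $H^n_\tee(X)[-n]\in\heartsuit_\tee$ is a finite iterated extension of the pieces $\mathcal{H}^{(n,\phi)}_{\tilde{\tee}}(X)[-n]\in\heartsuit_{\tee;\phi}$; since these are nonzero only for $\phi$ with $p(\phi)=-n$, we get $H^n_\tee(X)[-n]\in\langle\heartsuit_{\tee;\phi}\rangle_{\phi\in p^{-1}(-n)}=\mathscr{A}_n$, as wanted.

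It then remains to prove the claim. The direction ``$\Leftarrow$'' is read off from Remark~\ref{a-closer-look} and the slice description of $(\gamma_p)_!\tilde{\tee}$: if no nonzero $\tilde{\tee}$-cohomology object of $X$ lies over $\gamma_p^{-1}(k)$, then $\mathcal{H}^k_{(\gamma_p)_!\tilde{\tee}}(X)$ lies in the subcategory generated by the empty family, hence is $0$. For ``$\Rightarrow$'' I argue contrapositively. Assume $\mathcal{H}^{(n_0,\phi_0)}_{\tilde{\tee}}(X)\neq0$ with $n_0+p(\phi_0)=k$. Since $\tilde{\tee}$ is grading it is $\gamma_p$-compatible (Proposition~\ref{grad2}), so, iterating the ``bubble sort'' of the proof of Lemma~\ref{is-t-structure}, the $\tilde{\tee}$-Harder--Narasimhan filtration of $X$ can be rearranged into one whose graded pieces appear in order of weakly decreasing $\gamma_p$-value, the orthogonalities $\mathscr{D}_{\tilde{\tee};\psi}\orth\mathscr{D}_{\tilde{\tee};\psi'}[1]$ needed at each transposition being furnished by $\gamma_p$-compatibility. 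Each transposition preserves the multiset of graded pieces, so the block of consecutive pieces of $\gamma_p$-value exactly $k$ consists precisely of the objects $\mathcal{H}^{(n,\phi)}_{\tilde{\tee}}(X)$ with $n+p(\phi)=k$, and $\mathcal{H}^k_{(\gamma_p)_!\tilde{\tee}}(X)$ is their iterated extension inside the slice $\mathscr{D}_{(\gamma_p)_!\tilde{\tee};k}$, which is an abelian category. As $\mathcal{H}^{(n_0,\phi_0)}_{\tilde{\tee}}(X)$ is among them and is nonzero, and an object of an abelian category with a nonzero subquotient is nonzero, $\mathcal{H}^k_{(\gamma_p)_!\tilde{\tee}}(X)\neq0$, so $X\notin\heartsuit_p$.

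The delicate step is precisely this last one: making rigorous that the bubble sort of Lemma~\ref{is-t-structure}, applied with $f=\gamma_p$, genuinely presents $\mathcal{H}^k_{(\gamma_p)_!\tilde{\tee}}(X)$ as an iterated extension of the weight-$k$ graded pieces of the $\tilde{\tee}$-filtration inside the abelian slice $\mathscr{D}_{(\gamma_p)_!\tilde{\tee};k}$, so that non-vanishing becomes visible from a single nonzero piece. This is the only point at which gluability of $\tilde{\tee}$ is used in an essential way (through the $\gamma_p$-compatibility it entails); everything else is formal bookkeeping with the slice functor developed earlier.
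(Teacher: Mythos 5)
Your proof is correct, and its skeleton is the paper's: identify $\heartsuit_p$ with the heart of $(\gamma_p)_!\tilde{\tee}$, characterize membership by the vanishing of the $\tilde{\tee}$-cohomologies $\mathcal{H}^{(n,n')}_{\tilde{\tee}}(X)$ off the level set $n+p(n')=0$, and then repackage that condition in terms of the $H^n_\tee(X)$. The one place where you diverge is in how the middle step is justified. The paper simply records the equivalence between ``$X$ lies in $\langle \mathscr{D}_{\tilde{\tee};(n,n')}\rangle_{n+p(n')\geq 0}$'' and ``$\mathcal{H}^{(n,n')}_{\tilde{\tee}}(X)=0$ for $n+p(n')<0$'' with a citation to \cite[Remark~4.27]{fosco}, whereas you re-derive it from scratch: Remark~\ref{a-closer-look} for the easy implication, and a second pass through the bubble-sort of Lemma~\ref{is-t-structure} (licensed by the $\gamma_p$-compatibility from Proposition~\ref{grad2}) to see that $\mathcal{H}^k_{(\gamma_p)_!\tilde{\tee}}(X)$ is an iterated extension, inside the abelian slice, of exactly the pieces $\mathcal{H}^{(n,\phi)}_{\tilde{\tee}}(X)$ with $n+p(\phi)=k$, so that a single nonzero piece forces non-vanishing. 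This is a legitimate and in fact welcome elaboration: the cited remark in \cite{fosco} concerns upper sets of the indexing poset, while $\gamma_p^{-1}([0,+\infty))$ is \emph{not} an upper set of $\Z\times_{\mathrm{lex}}\hat{\Z}$, so the transposition argument you spell out (the multiset of cofibers being preserved at each swap) is precisely what makes the citation applicable in this twisted situation. The trade-off is length: the paper's proof is four lines, yours is self-contained.
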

 \begin{proof}
 Denote by $\tee_p$ the $\tee$-structure on $\mathscr{D}$ associated with the perversity function $p$. Then the lower subcategory $\mathscr{D}_{\tee_p;<0}$ and the upper subcategory $\mathscr{D}_{\tee_p;\geq 0}$ of $\mathscr{D}$ are defined, by Proposition \ref{grad}, as 
 \[
 \mathscr{D}_{\tee_p;<0}=\langle \mathscr{D}_{\tilde{\tee};(n,n')}\rangle_{n+p(n')<0}; \qquad \mathscr{D}_{\tee_p;\geq 0}=\langle \mathscr{D}_{\tilde{\tee};(n,n')}\rangle_{n+p(n')\geq 0}.
 \]
 These can be equivalently described as
\begin{align*}
 \mathscr{D}_{\tee_p;<0}&=\{X\in \mathscr{D}\text { such that } H_{\tilde{\tee}}^{(n,n')}(X)=\mathbf{0}\text{ for } n+p(n')\geq 0\};\\
   \mathscr{D}_{\tee_p;\geq 0}&=\{X\in \mathscr{D}\text { such that } H_{\tilde{\tee}}^{(n,n')}(X)=\mathbf{0}\text{ for } n+p(n')< 0\},
\end{align*}
 see, e.g., \cite[Remark 4.27]{fosco}. Therefore
 \[
 \heartsuit_p=\{X\in \mathscr{D}\text{ such that } H_{\tilde{\tee}}^{(n,n')}(X)=\mathbf{0}\text{ for } n+p(n')\neq 0\}.
 \]
 Equivalently, this means that
  \[
 \heartsuit_p=\{X\in \mathscr{D}\text{ such that } H_{\tee}^{n}(X)\in \langle \heartsuit_{\tee;n'}[n]\rangle_{p(n')=-n}\}.
 \]
\end{proof}

\begin{exmp}
Let $k\in \Z$ and let $\chi_{[k,+\infty)}\colon \Z\to \{0,1\}$ the characteristic function of the interval $[k,+\infty)$. Seen as a function from $\Z$ to $\Z$, the function $\chi_{[k,+\infty)}$is a perversity function of a very special kind: it is a perversity function taking exactly two values. Moreover, it is easy to see that --up to an additive constant-- perversity functions taking exactly two values are precisely characteristic functions of upper intervals in $\Z$. We have
\[
\chi_{[k,+\infty)}^{-1}(-n)=\begin{cases}
\emptyset &\text{if }n\neq -1,0\\
\\
[k,+\infty)&\text{if }n= -1\\
\\
(-\infty,k)&\text{if }n =0.
\end{cases}
\]
Therefore the perverse heart $\heartsuit_{\chi_{[k,+\infty)}^{-1}}$ of $\mathscr{D}$ is the full subcategory of $\mathscr{D}$ on those objects $X$ such that 
\[
\begin{cases}
H_\tee^{n}(X[1])= \mathbf{0} \qquad \text{ if }n\neq 0,1 \\
\\
H_\tee^{0}(X[1])= \langle \heartsuit_{\tee;n'}\rangle_{n'\in [k,+\infty)}\\
\\
H_\tee^{1}(X[1])= \langle \heartsuit_{\tee;n'}[1]\rangle_{n'\in (-\infty,k)}
\end{cases}
\]
for every $n\in \Z$. In other words, $\heartsuit_{\chi_{[k,+\infty)}^{-1}}$ is (up to a shift by 1) the heart of the tilted $t$-structure obrained by tilting $\tee$ with the torsion theory on $\heartsuit_\tee$ given by $\mathcal{F}=\langle \heartsuit_{\tee;n'}\rangle_{n'\in (-\infty,k)}$ and $\mathcal{T}=\langle \heartsuit_{\tee;n'}\rangle_{n'\in [k,+\infty)}$.
\end{exmp}

%\begin{rem}
%  In {\color{red}ref: BBD} the authors consider a process consisting in gluing $t$-structures on a pair of triangulated categories to obtain a $t$-structure on a third one. In their setting, the three categories are linked by an abstraction of the Grothendieck's six-functors formalism. However, the latter is more or less equivalent to the datum of a semiorthogonal decomposition (i.e., a $\{0,1 \}$-slicing, where $\{0,1 \}$ is the toset with two elements and trivial $\Z$-action) on the third category so that the input of the two (bounded) $t$-structures gives rise to a gluable $\{0,1\} \lex \Z$-slicing $\tee$. Then the $\Z \lex \{0,1 \}$-slicing $e_! \tee$ is just the new glued $t$-structure together with the natural torsion pair on its heart. This explains our terminology and links the language of the present paper to the classical theory of $t$-structures. 
%\end{rem}
%
%{\color{red} l'osservazione che segue dovr\`a trovare la sua collocazione
%
%\begin{rem}
%To motivate the next section, consider the following isomorphism fo $\Z$-tosets:
%  \begin{align*}
%     \Z \lex \Z &\to \Z \lex \Z_{\textnormal{free}} \\
%   (n,m) & \mapsto (n,m-n)  
%  \end{align*}
%\end{rem}
%
%... continua ...
%}

\section{A zoo of examples}

The  upshot of Theorem \ref{main-thm} is that out of a gluing abelian $\Z$-slicing on the heart of a bounded $t$-structure on a stable $\infty$-category $\mathscr{D}$
we explicitly get a natural morphism of $\Z$-posets
\[
\Psi\colon\Oo(\Z\times\Z)\to \ts(\mathscr{D}),
\]
which, on the subset of perversities, acts as
\begin{align*}
\Psi\colon \mathrm{perv}_\Z^{\mathrm{op}}&\to \ts(\mathscr{D})\\
p&\mapsto (\gamma_p)_!\tilde{\tee},
\end{align*}
see Proposition \ref{grad}. We can build this way whole new classes of `perverse' $t$-structures on $\mathscr{D}$. In this section we present a few examples, reinterpreting and rediscovering a few classical results from the literature on perverse $t$-structures within the unifying framework provided by the construction presented in the main section of the article.

\subsection{An example from algebra and one from geometry}

\subsubsection{The nonstandard $t$-structure from Koszul duality}

 An instance of a nonstandard construction of a $t$-structure lies within the theory of Koszul duality. Namely, under suitable finiteness and semisimplicity assumptions, if $B$ and $\check{B}$ are Koszul dual algebras, then there is an equivalence of stable $\infty$-categories $\mathscr{D}^b(\mathrm{gmod}(\check{B}))\xrightarrow{\sim}\mathscr{D}^b(\mathrm{gmod}({B}))$, where $\mathrm{gmod}(\check{B})$ and $\mathrm{gmod}({B})$ are the categories of finitely generated graded modules over $\check{B}$ and $B$, respectively; see \cite{kosz}. This equivalence, however, does not preserve the standard $t$-structures, and the standard $t$-structure on $\mathscr{D}^b(\mathrm{gmod}(\check{B}))$ is mapped to a nonstandard `diagonal' $t$-structure $\mathscr{D}^b(\mathrm{gmod}({B}))$. This diagonal $t$-structure is actually an example of perverse $t$-structure deriving from a gluable abelian slicing on the standard heart of $\mathscr{D}^b(\mathrm{gmod}(\check{B}))$. Namely, the standard heart $\heartsuit_\tee$ of $\mathscr{D}^b(\mathrm{gmod}({B}))$ is the abelian category of finitely generated $\Z$-graded $B$-modules. For $\phi \in \Z$, denote by $\heartsuit_{\tee;\phi}$ the full subcategory of $\heartsuit_{\tee}$ consisting of modules concentrated in degree $\phi$. Clearly, this defines an abelian $\Z$-slicing $\tilde{\tee}$ on $\heartsuit_{\tee}$. Following \cite{kosz} we have
  \[
  \textnormal{Ext}_{B}^n(M_{\phi},M_{\psi})=0
  \]
for $n>\psi - \phi$, for any modules $M_\phi\in\heartsuit_{\tee;\phi}$ and $M_\psi\in\heartsuit_{\tee;\psi}$ . In particular, if $\phi>\psi$ and $n>0$ we have  $\heartsuit_{\tee;\phi}\orth \heartsuit_{\tee;\psi}[n]$, and so
$\tilde{\tee}$ is a gluable slicing. Therefore, for every perversity $p$  we have a bounded perverse $t$-structure $(\gamma_p)_!\tilde{\tee}$ on $\mathscr{D}^b(\mathrm{gmod}({B}))$. By choosing $p$ to be the identity perversity $\mathrm{id}\colon \Z\to \Z$ we get a distinguished perverse $t$-structure $(\gamma_{\mathrm{id}})_!\tilde{\tee}$ on $\mathscr{D}^b(\mathrm{gmod}({B}))$. This is precisely the `diagonal' $t$-structure %arising in Koszul duality, see 
considered in \cite{kosz}.

\subsubsection{Strictly perverse coherent sheaves}\label{perverse-coherent}

 Let $X$ be a smooth projective variety over $\mathbb{C}$ \footnote{This assumption can be greatly weakened, see \cite{bezr}.} and let $\mathscr{D} = \mathscr{D}^b(\textrm{Coh}(X))$, the (bounded) derived category of coherent sheaves on $X$, endowed with its canonical heart $\heartsuit = \textrm{Coh}(X)$. Then there is an abelian $\{0, \cdots , \dim X \}$-slicing on $\heartsuit$ given by defining $\heartsuit_i$ as the full subcategory of $\textrm{Coh}(X)$ on coherent sheaves with support of pure codimension $i$, for each $0 \leq i \leq \dim X$. 
 %The abelian Postnikov towers simply amount to the classical filtrations by codimension of support. 
 We can see $\{\heartsuit_i\}_{0\leq i\leq \dim X}$ as an abelian $\Z$-slicing of $\heartsuit$ via the obvious $\Z$-poset embedding $\{0, \cdots , \dim X \} \subseteq \hat{\Z}$. By Serre duality and Grothendieck vanishing, if $\mathscr{E}$ and $\mathscr{F}$ are coherent sheaves on $X$ then
 \[
 \mathrm{Ext}^n(\mathscr{E},\mathscr{F})=0\qquad \text{for } n<\dim \mathrm{supp}(\mathscr{F})-\dim\mathrm{supp}(\mathscr{E}).
 \]
  Equivalently, this can be rewritten as $\mathscr{D}(\mathscr{E},\mathscr{F}[n])=0$ for $n<i-j$, for any $\mathscr{E}$ in $\heartsuit_i$ and $\mathscr{F}$in $\heartsuit_j$, i.e., 
 $\heartsuit_{i}\orth\heartsuit_{j}[n]$ for $i>j+n$. In other words, $\{\heartsuit_i\}_{0\leq i\leq \dim Xn}$ is a perverse abelian $\Z$-slicing on $\textrm{Coh}(X)$. Therefore, by Theorem \ref{main-thm} and Remark \ref{main-rem}, with any strict perversity function $p$ is associated a perverse $t$-structure on  $\mathscr{D}$, i.e., one gets a lattice of $t$-structures on  $\mathscr{D}$ parametrized by strict perversities. The heart of these perverse $t$-structures are the perverse coherent sheaves constructed in  \cite{bezr}, while the lattice of $t$-structures on $\mathscr{D}$ associated with the abelian $\Z$-slicing $\{\heartsuit_i\}_{0\leq i\leq \dim X}$ is the `distributive lattice of $t$-structures' from \cite{bondper}.

\subsection{Gluable slicings from baric structures}\label{baric}

  The notion of a bounded Bridgeland $\hat{\Z}$-slicing is not new: it already appears in literature under other names. Namely, it is no more than an infinite version of a semiorthogonal decomposition in the sense of \cite{semiort}, or a `\textit{baric structure}' as defined in \cite{baric}. These are a rich source of gluable abelian $\Z$-slicings. Namely, given a baric structure $\{\mathscr{D}_{n}\}_{n\in \mathbb{Z}}$ on a stable $\infty$-category $\mathscr{D}$ together with the datum of a bounded $t$-structure on each of the stable subcategories $\mathscr{D}_{n}$, we can look at this as the datum of a $\hat{\Z} \lex \Z$-slicing $\hat{\tee}$ on $\mathscr{D}$. If $\hat{\tee}$ is gluable, then $e_!\hat{\tee}$ is a gluable ${\Z} \lex \hat{\Z}$-slicing of $\mathscr{D}$, by Lemma \ref{incolla2}. By the results in Section \ref{sec:heart}, $e_!\hat{\tee}$ is equivalently an abelian slicing on the heart $\heartsuit_{{\tee}}$ of the bounded $t$-structure ${\tee}$ on $\mathscr{D}$ defined by the composition
  \[
 \Oo(\Z)\to \Oo(\Z\times_{\mathrm{lex}}\hat{\Z})\xrightarrow{e_!\hat{\tee}} \ts(\mathscr{D}).
  \]
 and so it is a gluable abelian slicing. {Moreover, and remarkably, the gluability of $\hat{\tee}$ can be easily explicited. Namely, spelling out Definition \ref{gluable}, we see that $\hat{\tee}$ is gluable if and only if  $\mathscr{D}_{i;\geq 0}\orth \mathscr{D}_{j;0}$ for any $i<j$. {As  $\mathscr{D}_{i;\geq 0}$ is generated by the subcategories $\mathscr{D}_{i}^\heartsuit[k]$ for $k\geq 0$, this is equivalent to $\mathscr{D}_{i}^\heartsuit\orth\mathscr{D}_{j}^\heartsuit[n]$ whenever $i<j$ and $n\leq 0$. This generalizes the gluability condition from Example \ref{example:bbd}.} 
 }

\subsubsection{Gluability and the Beilinson-Soul\'e conjecture}\label{motives}

%  In this example we relate the gluability condition with the Beilinson-Soul\'e conjecture from motivic topology. We fix a field $k$ and, just for this example, we stick to a more traditional $1$-categorical setup. 

The existence of motives, which is still an open question in general, was conjectured by Grothendieck in order to build a universal Weil cohomology theory for schemes: the `motivic cohomology'. Following this input, Deligne observed that it could be easier to construct first a triangulated category (the `mixed' motives) which should play the role of the derived category of motives, and later recover the abelian category of motives as the heart of a bounded $t$-structure on mixed motives. Finally, Voedvodskij succeded in constructing a triangulated category of mixed rational motives over a characteristic zero field $\mathbbm{k}$.
%\footnote{Or, more generally, over a perfect field $k$ which admits resolution of singularities, see \cite{friedlander-voedvosky-bivariant}}
This triangulated category contains, for any $n \in \Z$, a `Tate object' $\mathbb{Q}(n)$ that represents the $n$-th motivic cohomology functor. Let $\mathscr{D}\textnormal{TM}_\mathbbm{k}$ be the category of mixed rational Tate motives, i.e., by definition, the triangulated\footnote{As, up to our knowledge, a description of the stable $\infty$-category of mixed motives is not available in the literature, in this subsection we stick to the more traditional $1$-categorical setup of triangulated categories. The same consideration applies to the examples considered in the subsequent subsections.} subcategory of the Voedvodskij category of mixed motives over $k$ generated by the Tate objects.  Denote by $(\mathscr{D}\textnormal{TM}_\mathbbm{k})_{m}$ the triangulated subcategory of $\mathscr{D}\textnormal{TM}_\mathbbm{k}$ generated by $\mathbb{Q}(m)$. One has an isomorphisms of groups
  \[
   \mathscr{D}\textnormal{TM}_\mathbbm{k}(\mathbb{Q}(i),\mathbb{Q}(j)[n])=K_{2(j-i)-n}(\mathbbm{k})^{(j-i)}
   \]
   where $K_a(\mathbbm{k})$ is the $a$-th higher $K$-theory group of the point $\textnormal{Spec}(\mathbbm{k})$ and $K_a(\mathbbm{k})^{(b)}$ is the weight $b$ summand of $K_a(\mathbbm{k}) \otimes_{\mathbb{Z}} \mathbb{Q}$ with respect to the Adams action. %In other words, $K_a(k)^{(b)}$ is the $a$-th $b$-codimensional Bloch's higher Chow group of $\textnormal{Spec}(k)$ with rational coefficients \cite{???}.

  By dimensional reasons, the right hand side vanishes for $i > j$ and for $i = j$ with $n \neq 0$. In other words, the Tate objects form an infinite exceptional collection on $\mathscr{D}\textnormal{TM}_\mathbbm{k}$ which is clearly full by definition. By the general theory of semiorthogonal decomposition, this implies that the triangulated subcategories $(\mathscr{D}\textnormal{TM}_\mathbbm{k})_{m}$ with $m\in \Z$ are the slices of a baric structure on $\mathscr{D}\textnormal{TM}_\mathbbm{k}$ and that each of these slices is equivalent to $\mathscr{D}^b(\mathbb{Q}\text{-Vect})$,
 the bounded derived category  of finite-dimensional $\mathbb{Q}$-vector spaces, via an equivalence mapping $\mathbb{Q}(m)$ to $\mathbb{Q}$. %(which is equivalent to the abelian category graded vector spaces with finite-dimensional homogeneous components). 
   Since $\mathscr{D}^b(\mathbb{Q}\text{-Vect})$ is a bounded derived category, it comes equipped with a canonical bounded $t$-structure. The equivalences $(\mathscr{D}\textnormal{TM}_\mathbbm{k})_{m}\simeq \mathscr{D}^b(\mathbb{Q}\text{-Vect})$ then endow each slice of the baric structure with a bounded $t$-structure, whose heart $(\mathscr{D}\textnormal{TM}_\mathbbm{k})_{m}^\heartsuit$ is the abelian category generated by $\mathbb{Q}(m)$. The datum of these canonical $t$-structures on the slices of the baric structure  $\{(\mathscr{D}\textnormal{TM}_\mathbbm{k})_{m}\}_{m\in \mathbb{Z}}$ defines a Bridgeland $\hat{\Z}\lex \Z$ slicing on $\mathscr{D}\textnormal{TM}_\mathbbm{k}$ which, by the result in Section \ref{baric}, is gluable if and only if  
\[
(\mathscr{D}\textnormal{TM}_\mathbbm{k})_{i}^\heartsuit\orth(\mathscr{D}\textnormal{TM}_\mathbbm{k})_{j}^\heartsuit[n]
\]
 whenever $i<j$ and $n\leq 0$. As $(\mathscr{D}\textnormal{TM}_\mathbbm{k})_{m}^\heartsuit$ is generated by $\mathbb{Q}(m)$, the gluability condition is equivalent to
 \[
   \mathscr{D}\textnormal{TM}_\mathbbm{k}(\mathbb{Q}(i),\mathbb{Q}(j)[n])=\mathbf{0}
    \]
 whenever $i<j$ and $n\leq 0$, and therefore to    
\[
K_{2(j-i)-n}(\mathbbm{k})^{(j-i)} = \mathbf{0}
\]  whenever  $i < j$ and $n \leq 0$. This is exactly the Beilinson-Soul\'e standard vanishing conjecture, which is known to hold, for instance, when $\mathbbm{k}$ is a number field due to Borel's computation of the ranks of K-theory groups in this case \cite{borel}. When the conjecture holds, by applying $e_!$ we get a Bridgeland $\Z \lex \hat{\Z}$-slicing on $\mathscr{D}\textnormal{TM}_\mathbbm{k}$ and thus in particular a bounded $t$-structure whose heart contains the desired unmixed Tate motives over $\mathbbm{k}$. In other words, we recover a well known but %highly 
nontrivial fact (see \cite{levine}) using an abstract and very general reasoning: assuming the Beilinson-Soul\'e conjecture is true, (Tate) motives exist. \\
   Moreover, following the reasoning recalled at the beginning of this Section, we also get a $t$-structure on $\mathscr{D}\textnormal{TM}_\mathbbm{k}$ for each perversity function on $\mathbb{Z}$. These are the `\textit{perverse motives}' appearing in \cite{permot}. 

\subsubsection{Three more examples}

There are a number of other constructions in literature which are a particular case of the one we presented here. Just to mention a few, in \cite{beil} Beilinson defines a notion of \textit{'filtered structure'} on a triangualted category. This is no more than a baric structure $\{ \mathscr{D}_{n} \}_{n \in \hat{\Z}}$ with some additional data and properties. Starting with a $t$-structure on $\mathscr{D}_0$, Beilinson rearranges it into a $t$-structure on $\mathscr{D}$. One can easily check that the axioms of a filtered structure guarantee that it defines a gluable $\hat{\Z} \lex \Z$-slicing and that the distinguished $t$-structure obtained by gluing coincides with Beilinson's new $t$-structure on $\mathscr{D}$. \\ 
\par

In \cite{macri}, Macr\'i starts with a finite `Ext exceptional' collection on a certain triangualted cateogory $\mathscr{D}$ and get a distinguished $t$-structure on $\mathscr{D}$. This construction actually goes along the exact lines sketched in Subsection \ref{motives}. Namely, when translated into the language of this note, a finite exceptional collection is just a baric structure with finitely many nonzero slices, all equivalent to the derived categogry of finite-dimensional vector spaces over some fixed field, and the condition of being `Ext exceptional' is identified with the gluability condition. \\
\par
Finally, a possibly more exotic instance is in \cite{lagra}. Here, starting with a suitable $\mathbb{R}$-slicing on the Fukaya category $\mathscr{D}_0$ of a symplectic manifold $M$, Hensel builds a $t$-structure on the Fukaya category $\mathscr{D}$ of $\mathbb{C} \times M$. This is done by embedding $\mathscr{D}_0$ into a triangulated category $\mathscr{D}$ as the zeroth slice of a baric structure. Lemma 7.1 from \cite{lagra} can then be reformulated as our gluability condition, and the distinguished $t$-structure obtained by gluing is seen to be the $t$-structure on the Lagrangian cobordism category exhibited by \cite{lagra}. \\

%\subsubsection{The Chinese Connection}
%{\color{red} qua metterei anche quella del cinese Yu Qiu non ancora pubblicata, ma devo prima parlare con lui e chiedere il permesso}.  

\newpage

\afterpage{\blankpage}
\clearpage 
\bibliographystyle{alpha}
\bibliography{biblio}

\end{document}